\theoremstyle{plain}
\newtheorem{theorem}{Theorem}[section]
\newtheorem{proposition}[theorem]{Proposition}
\newtheorem{lemma}[theorem]{Lemma}
\theoremstyle{definition}
\newtheorem{definition}[theorem]{Definition}
\newtheorem{example}[theorem]{Example}
\theoremstyle{remark}
\newtheorem{remark}[theorem]{Remark}
\def\to{\rightarrow}
\def\Aut{\operatorname{Aut}}
\def\Hom{\operatorname{Hom}}
\def\Sym{\operatorname{Sym}}
\def\Ker{\operatorname{Ker}}
\def\dim{\operatorname{dim}}
\def\Pic{\operatorname{Pic}}
\def\Gr{\operatorname{Gr}}
\def\SL{\operatorname{SL}}
\def\SO{\operatorname{SO}}
\def\Sp{\operatorname{Sp}}
\begin{document}

\title[Standard embeddings of smooth Schubert varieties]
{Standard embeddings of smooth Schubert varieties in rational homogeneous manifolds \\ of Picard number 1}

\author{Shin-Young Kim and Kyeong-Dong Park}

\address{Shin-Young Kim \\ Korea Institute for Advanced Study \\ Seoul 02455, Korea}
\email{shinyoungkim@kias.re.kr}
\curraddr{Department of Mathematics and Statistics \\ Masaryk University \\ Kotl\'a\`rsk\'a 2 \\ 611 37 Brno, Czech Republic} 
\email{kim­@math­.muni.cz}

\address{Kyeong-Dong Park \\ Center for Geometry and Physics \\ Institute for Basic Science (IBS) \\ Pohang 37673, Korea}
\email{kdpark@ibs.re.kr}

\thanks{The first author was supported by the National Researcher Program 2010-0020413 of NRF, GA17-19437S of Czech Science Foundation(GACR), and was partially supported by the Simons-Foundation grant 346300 and the Polish Government MNiSW 2015-2019 matching fund. The second author was supported by BK21 PLUS SNU Mathematical Sciences Division and IBS-R003-Y1.}

\subjclass[2010]{Primary 14M15, 32M10, 53C30}
\keywords{smooth Schubert varieties, rational homogeneous manifolds, variety of minimal rational tangents, standard embeddings, Cartan-Fubini extension}

\begin{abstract}
Smooth Schubert varieties in rational homogeneous manifolds of Picard number 1 are horospherical varieties.
We characterize standard embeddings of smooth Schubert varieties in rational homogeneous manifolds of Picard number 1
by means of varieties of minimal rational tangents.
In particular, we mainly consider nonhomogeneous smooth Schubert varieties in symplectic Grassmannians
and in the 20-dimensional $F_4$-homogeneous manifold associated to a short simple root.
\end{abstract}

\maketitle

\section{Introduction}

A \emph{rational homogeneous manifold} is a homogeneous space $G/P$
for a complex simple Lie group $G$ and a parabolic subgroup $P\subset G$.
Under the action of a Borel subgroup $B$ of $G$,
the closure of a $B$-orbit in $G/P$ is called a \emph{Schubert variety} of $G/P$.
For details about the parabolic subgroups and the Schubert varieties of $G/P$, see Springer \cite{Sp}.

Most Schubert varieties are singular,
and smooth Schubert varieties have been classified by using combinatorial and geometric methods
(for the combinatorial smoothness criterion, see Billey-Postnikov \cite{BP}).
Since conjugacy classes of parabolic subgroups of a simple Lie group are in one-to-one correspondence with subsets of
the set of simple roots (equivalently, nodes of the corresponding Dynkin diagram),
the Dynkin diagrams with a marked node 
correspond to rational homogeneous manifolds of Picard number 1.
A marked subdiagram of the marked Dynkin diagram of $G/P$ defines
a homogeneous submanifold $G_0/P_0$ of $G/P$, the $G_0$-orbit of the base point $eP \in G/P$,
then it is a smooth Schubert variety (see Section 2 of Hong-Mok \cite{HoM2}).
Lakshmibai-Weyman \cite{LW} and Brion-Polo \cite{BrP} showed that
when $G/P$ is a Hermitian symmetric space of compact type,
any smooth Schubert variety in $G/P$ is
a homogeneous submanifold associated to a subdiagram of the marked Dynkin diagram of $G/P$.
More generally, when $G/P$ is associated to a long simple root,
all smooth Schubert varieties are homogeneous submanifolds associated to subdiagrams of the marked Dynkin diagram (Proposition 3.7 of Hong-Mok \cite{HoM2}).
On the other hand, when $G/P$ is associated to a short simple root,
there may exist a smooth Schubert variety which is not homogeneous.
Recently, Hong \cite{Ho15} and Hong-Kwon \cite{HK} have classified all smooth Schubert varieties in this case.

A smooth Schubert variety $Z$ of $G/P$ is canonically embedded in $G/P$ by an equivariant embedding induced from the inclusion $B \subset G$.
By a \emph{standard embedding} of $Z$ into $G/P$, we will mean
the composite of the canonical equivariant embedding and an element of the automorphism group of $G/P$.
When $G/P$ is associated to a simple root and a homogeneous submanifold $G_0/P_0$ is not linear,
we have a characterization of standard embeddings of $G_0/P_0$ into $G/P$ by
means of varieties of minimal rational tangents as follows.

\begin{theorem}[Theorem 1.2 of Hong-Mok \cite{HoM1}, Theorem 1.2 of Hong-Park \cite{HP}]
\label{homogeneous submanifolds}
Let $X$ be a rational homogeneous manifold associated to a simple root
and let $Z$ be a nonlinear rational homogeneous manifold associated to a subdiagram of the marked Dynkin diagram of $X$.
If $f$ is a holomorphic embedding from a connected open subset $U$ of $Z$
into $X$ which respects varieties of minimal rational tangents for a general point $z\in U$,
then $f$ extends to a standard embedding of $Z$ into $X$.
\end{theorem}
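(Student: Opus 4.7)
The plan is to apply a Cartan--Fubini type extension principle adapted for embeddings between uniruled projective manifolds, rather than for biholomorphisms. First, I would set up the VMRT data on both sides: $X$ carries a distinguished family of minimal rational curves with VMRT fibration $\mathcal{C}_X \subset \mathbb{P}(T_X)$, and $Z$, being itself a rational homogeneous manifold, carries an analogous VMRT fibration $\mathcal{C}_Z \subset \mathbb{P}(T_Z)$. The hypothesis that $f$ respects VMRTs at a general $z \in U$ says that the projectivized differential $\mathbb{P}(df_z) : \mathbb{P}(T_z Z) \to \mathbb{P}(T_{f(z)} X)$ embeds $\mathcal{C}_{Z,z}$ into $\mathcal{C}_{X,f(z)}$.

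Next, I would propagate $f$ along minimal rational curves. For a minimal rational curve $C$ of $Z$ through a general point $z \in U$, its tangent direction at $z$ is a point of $\mathcal{C}_{Z,z}$ which $\mathbb{P}(df_z)$ sends to a point of $\mathcal{C}_{X,f(z)}$. By the defining property of the VMRT, this is the tangent direction of a unique minimal rational curve $C' \subset X$ through $f(z)$, so one can glue $f$ on $C \cap U$ to the parametrization of $C'$ to obtain a local holomorphic extension along $C$. Because $Z$ is rationally connected by chains of minimal rational curves, iterating this construction produces a holomorphic extension of $f$ on a dense open subset of $Z$.

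The main obstacle is showing that this curve-by-curve extension is globally consistent and assembles into a holomorphic embedding $\tilde{f} : Z \to X$. This is precisely where the Cartan--Fubini type extension theorem of Hwang--Mok, in the embedding form developed in \cite{HoM1}, enters: compatibility of $df$ with the VMRT structure, together with the nonlinearity of $\mathcal{C}_{Z,z}$ (which supplies nontrivial projective second fundamental form data and hence infinitesimal rigidity), forces the local extensions along different minimal rational curves to agree on overlaps and to extend holomorphically across the codimension-one locus where naive propagation might fail. The nonlinearity hypothesis is essential; a linear VMRT fiber would mean the only rational curves are lines in a projective space, and the resulting extension theorem genuinely fails in that case.

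Finally, to identify $\tilde{f}$ with a standard embedding, I would invoke the homogeneity of $X$. The image $\tilde{f}(Z)$ is a smooth subvariety whose VMRT structure coincides, along $\tilde{f}(U)$, with that of the canonical Schubert copy of $Z$ in $X$; since $X$ is a homogeneous $G$-variety and the canonical embedding is the unique (up to $G$) subvariety with this infinitesimal structure through a given point, there exists an automorphism of $X$ carrying the canonical image of $Z$ onto $\tilde{f}(Z)$. Composing with this automorphism exhibits $\tilde{f}$ as the composition of the canonical equivariant embedding with an element of $\Aut(X)$, which is exactly the definition of a standard embedding.
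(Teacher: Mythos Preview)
The paper does not itself prove this theorem---it is quoted from \cite{HoM1} and \cite{HP}---but the proof of Theorem~\ref{Main theorem} follows their method, so that is the benchmark. Your outline has the right shape but misses the two places where the real work lies, and reverses the order of the main steps. The non-equidimensional Cartan--Fubini extension (Proposition~\ref{Non-equidimensional Cartan-Fubini type extension theorem}) does not follow from ``nonlinearity of $\mathcal C_{Z,z}$'' alone; it requires the specific \emph{nondegeneracy} condition $\Ker \sigma_{df(\alpha)}(\,\cdot\,, T_{df(\alpha)}df(\widetilde{\mathcal C}_z(Z)))=\mathbb C\, df(\alpha)$, where $\sigma$ is the second fundamental form of $\widetilde{\mathcal C}_{f(z)}(X)\subset T_{f(z)}X$. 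This is a condition on how $\mathcal C_z(Z)$ sits inside $\mathcal C_{f(z)}(X)$, not an intrinsic property of $\mathcal C_z(Z)$, and verifying it case by case is the main technical burden (Lemmas~\ref{second fundamental form} and~\ref{second fundamental form:F4}; Propositions~\ref{nondegenerate: symplectic} and~\ref{nondegenerate: F_4}). Once nondegeneracy holds, Cartan--Fubini directly yields a rational extension $F\colon Z\to X$; your ``propagate $f$ along curves, then invoke Cartan--Fubini for consistency'' has the logic reversed.

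Your final step---``the canonical embedding is the unique (up to $G$) subvariety with this infinitesimal structure through a given point''---is precisely the nontrivial content and is not a one-line consequence of homogeneity. The actual argument is: first show that any linear section of $\mathcal C_x(X)$ projectively equivalent to $\mathcal C_x(Z)$ is already an isotropy-translate of it (Propositions~\ref{projective equivalence:Symplectic Grassmannian} and~\ref{projective equivalence: F_4}); normalize so that $F(x_0)=x_0$ and $\mathcal C_{x_0}(F(Z))=\mathcal C_{x_0}(Z)$; then use that tangency of $\mathcal C_y(Z)$ and $\mathcal C_y(F(Z))$ at a common point forces their equality (Proposition~\ref{tangency: F_4} and the analogous Proposition~4.3 of \cite{HoM2}) to propagate $\mathcal C_y(F(Z))=\mathcal C_y(Z)$ inductively along chains of minimal rational curves sweeping out $Z$, yielding $F(Z)=Z$ and hence $F\in\Aut(Z)\subset\Aut(X)$. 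Propagation along minimal rational curves enters here---to identify the image after $F$ already exists---not to construct $F$.
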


Given a uniruled projective variety $X$ equipped with a minimal rational component $\mathcal K$,
the \emph{variety $\mathcal C_x(X) \subset \mathbb P(T_xX)$ of minimal rational tangents} (VMRT)
at a general point $x \in X$
is defined by the closure of the space of the tangent vectors of minimal rational curves
belonging to $\mathcal K$ passing through $x$.
If $X$ is a rational homogeneous manifold $G/P$ associated to a simple root, then there is a
canonical choice of a minimal rational component, namely, the
irreducible family of lines $\mathbb P^1$ which are contained in $X$
after we embed $X$ into $\mathbb P^{N}$ by the ample generator of
the Picard group of $X$. Similarly, there is a canonical choice of a
minimal rational component for a smooth Schubert variety.
For a general reference on the theory of rational curves and varieties of minimal rational tangents,
see Koll\'ar \cite{Ko}, Hwang-Mok \cite{HM99}, Hwang \cite{Hw00} and Mok \cite{Mk08}.

For a holomorphic embedding $f \colon U \rightarrow X$
from an open subset $U$ of a uniruled projective variety $Z$ with a minimal rational component $\mathcal H$,
we say that $f$ {\it respects varieties of minimal rational tangents} if
$$df(\mathcal C_z(Z))= df(\mathbb P(T_zZ)) \cap \mathcal C_{f(z)}(X)$$ for a general point $z \in U$,
where $\mathcal C_z(Z)$ is the variety of minimal rational tangents of $Z$ at $z \in Z$ associated to $\mathcal H$
and $\mathcal C_{f(z)}(X)$ is the variety of minimal rational tangents of $X$ at $f(z)$ associated to $\mathcal K$.

If $Z$ is linear, then the condition that $f \colon U \rightarrow X$ respects varieties of minimal rational tangent
is equivalent to the condition that $df(\mathbb P(T_zZ))$
should be contained in $\mathcal C_{f(z)}(X)$ for any $z \in U$.
In other words, for each $f(z) \in f(U)$
there is a linear space in $X$ which is tangent to $f(U)$ at $f(z)$.
In general, when $Z$ is linear, there is a non-standard embedding from an open subset $U$ of $Z$ into $X$
so that Theorem \ref{homogeneous submanifolds} does not hold.
For example, there is an embedding of $Z=\mathbb P^1$ into $X$
with $df(\mathbb P(T_zZ)) \subset \mathcal C_{f(z)}(X)$ for any $z \in \mathbb P^1$
whose image is not contained in any linear space in $X$
(see Section 6 of Choe-Hong \cite{CH}).
However, when $Z$ is a \emph{maximal} linear space,
Theorem 1.3 of Hong-Park \cite{HP} gives a related result with some exceptions involving counterexamples 
constructed by Choe-Hong \cite{CH}.

In this paper, we will prove a generalization of Theorem
\ref{homogeneous submanifolds}
in the case that $Z$ is a smooth Schubert variety of $G/P$
by the arguments developed in Hong-Mok \cite{HoM1} and Hong-Park \cite{HP}.
The fundamental tools for the proof are the non-equidimensional Cartan-Fubini type extension theorem
(Proposition \ref{Non-equidimensional Cartan-Fubini type extension theorem})
and the parallel transport of VMRTs along minimal rational curves (see Section 2.2).

\begin{theorem}
\label{Main theorem}
Let $X$ be a rational homogeneous manifold associated to a simple root and let $Z$ be a nonlinear smooth Schubert variety in $X$.
If $f$ is a holomorphic embedding from a connected open subset $U$ of $Z$
into $X$ which respects varieties of minimal rational tangents for a general point $z\in U$,
then $f$ extends to a standard embedding of $Z$ into $X$.
\end{theorem}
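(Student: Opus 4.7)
The plan is to adapt the two-step strategy behind Theorem \ref{homogeneous submanifolds} (from Hong-Mok \cite{HoM1} and Hong-Park \cite{HP}) to the horospherical setting of a smooth Schubert variety $Z$. Since $Z$ is smooth and Schubert in a rational homogeneous $G/P$ of Picard number 1, it carries a canonical minimal rational component (the family of lines of $G/P$ lying in $Z$), and the nonlinearity hypothesis guarantees that $\mathcal{C}_z(Z) \subset \mathbb{P}(T_zZ)$ is a nontrivial projective subvariety whose projective geometry can be matched against $\mathcal{C}_{f(z)}(X)$.

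First, I would apply the non-equidimensional Cartan-Fubini type extension theorem (Proposition \ref{Non-equidimensional Cartan-Fubini type extension theorem}) to $f\colon U \to X$. The VMRT-preserving hypothesis, combined with the non-degeneracy of the second fundamental form of $\mathcal{C}_{f(z)}(X) \subset \mathbb{P}(T_{f(z)}X)$ along directions coming from $T_zZ$ (which ultimately rests on $Z$ being nonlinear), should yield an extension of $f$ to a rational map $\widetilde f \colon Z \dashrightarrow X$ that sends a general minimal rational curve of $Z$ to a minimal rational curve of $X$.

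Second, I would globalize the VMRT-preserving property by parallel transport of VMRTs along minimal rational curves, as set up in Section 2.2. Because $Z$ contains a dense open orbit on which it looks homogeneous, and because $Z$ is chain-connected by its minimal rational curves, once the VMRT-preserving equality holds at a single general $z \in U$ and is stable under moving along a line through $z$, it propagates to every general point of $Z$; this gives global control on $d\widetilde f$ and in particular shows that $\widetilde f$ is an immersion on a dense open subset. From here the argument recovers a Lie-algebra isomorphism between the relevant tangent spaces and identifies $\widetilde f(Z)$ with a translate $gZ$ for some $g \in \Aut(G/P)$, which is exactly the assertion that $f$ extends to a standard embedding.

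The main obstacle I expect is the second step, namely the parallel transport in the absence of full homogeneity. In Theorem \ref{homogeneous submanifolds} the variety $Z$ is homogeneous, so once VMRT-preservation is verified at one point it is automatic elsewhere. For a nonhomogeneous smooth Schubert variety $Z$, which is only horospherical, the VMRT $\mathcal{C}_z(Z)$ may have different qualitative behaviour over the open $G_0$-orbit than over the closed orbit, and the deformation theory of minimal rational curves passing through the smaller orbit is more delicate. Verifying that lines on $Z$ connect a general point to all other general points without leaving the locus where $d\widetilde f$ is well-controlled, and that the parallel transport of $\mathcal{C}_z(Z) \subset \mathcal{C}_z(X)$ is compatible with the projective geometry of $\mathcal{C}_{f(z)}(X)$, is where I expect the bulk of the technical work to lie. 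I anticipate that the two featured examples, namely nonhomogeneous smooth Schubert varieties in symplectic Grassmannians and in the 20-dimensional $F_4$-manifold associated to a short simple root, have to be treated with case-by-case analyses of their VMRT structures, since these are precisely the settings where short simple roots produce the exotic horospherical $Z$.
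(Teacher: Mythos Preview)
Your proposal is essentially the paper's strategy, and you have correctly located the difficulty in the parallel transport step for nonhomogeneous $Z$. Two points deserve sharpening, because they are exactly what the paper's case-by-case analysis establishes and your outline leaves them implicit.

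First, to invoke Proposition \ref{Non-equidimensional Cartan-Fubini type extension theorem} you need nondegeneracy of $f$ with respect to $(\mathcal K,\mathcal H)$, and this does not follow directly from nonlinearity of $Z$. The paper proves a \emph{projective equivalence} lemma (Propositions \ref{projective equivalence:Symplectic Grassmannian} and \ref{projective equivalence: F_4}): any linear section $\mathcal B'=\mathcal A\cap\mathbb P(W')$ of $\mathcal A=\mathcal C_x(X)$ that is projectively equivalent to $\mathcal B=\mathcal C_x(Z)\subset\mathbb P(T_xZ)$ must be of the form $h\mathcal B$ for some $h$ in a Levi factor of $P$. This is what guarantees that $df(\mathcal C_z(Z))$ is a Levi-translate of the model $\mathcal B$, after which the explicitly computed nondegeneracy of the pair $(\mathcal A,\mathcal B)$ (Propositions \ref{nondegenerate: symplectic} and \ref{nondegenerate: F_4}, using the second fundamental form calculations of Lemmas \ref{second fundamental form} and \ref{second fundamental form:F4}) transfers to $f$.

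Second, the parallel transport step is not a Lie-algebra identification in the way you describe. After normalizing so that $F(x_0)=x_0$ and $\mathcal C_{x_0}(F(Z))=\mathcal C_{x_0}(Z)$, one moves along a minimal rational curve $C$ through $x_0$ to a nearby point $y$; the deformation theory of minimal rational curves gives that $\mathcal C_y(Z)$ and $\mathcal C_y(F(Z))$ are \emph{tangent} at $[T_yC]$. The crucial ingredient is then a \emph{tangency-implies-equality} statement for translates $h\mathcal B$ of $\mathcal B$ (Proposition 4.3 of Hong-Mok \cite{HoM2} in the symplectic case, Proposition \ref{tangency: F_4} here for $(F_4,\alpha_3)$), which forces $\mathcal C_y(F(Z))=\mathcal C_y(Z)$. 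Iterating along a chain of minimal rational curves gives $F(Z)=Z$ up to $\Aut(X)$, and the final identification of $F$ with an element of $\Aut(X)$ uses the explicit description of $\Aut(Z)$ from Proposition \ref{automorphism groups of horosphercal} (e.g.\ Mihai's result that $\mathrm{PSp}(2\ell-1)$ sits inside $\Sp(2\ell)$). So the case-by-case work consists of three concrete lemmas per case---second fundamental form, projective equivalence, and tangency---rather than a general horospherical argument.
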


We denote the rational homogeneous manifold $G/P$ associated to a simple root $\alpha_i$ by $(G, \alpha_i)$.
Among rational homogeneous manifolds associated to a short simple root,
since $(B_\ell, \alpha_\ell)\cong (D_{\ell+1}, \alpha_\ell)$,
$(C_\ell, \alpha_1)\cong \mathbb P^{2\ell-1}$ and $(G_2,
\alpha_1)\cong (B_3, \alpha_1) \cong \mathbb Q^5$ as complex
manifolds, three cases can be regarded as rational homogeneous
manifolds associated to a long simple root.
Moreover, any smooth Schubert variety in the 15-dimensional $F_4$-homogeneous manifold $(F_4, \alpha_4)$ associated to the short simple root $\alpha_4$ is a linear space
by Theorem 1.3 of Hong-Kwon \cite{HK}.
Thus, it suffices to consider nonhomogeneous smooth Schubert varieties
in the symplectic Grassmannians (Section 3) and
in the 20-dimensional $F_4$-homogeneous manifold $(F_4, \alpha_3)$ associated to the short simple root $\alpha_3$ (Section 4).
Because these Schubert varieties are smooth nonhomogeneous horospherical varieties of Picard number 1,
we review notions and facts about horospherical varieties in Section 2.1.

\section{Horospherical varieties and Cartan-Fubini extension}

\subsection{Spherical and horospherical varieties}

For a complex reductive algebraic group $G$,
a complex algebraic variety with an action of $G$ is called a \emph{$G$-variety}.
A \emph{$G$-spherical variety} is a normal $G$-variety having an open orbit under the action of a Borel subgroup $B$ of $G$.
A 
normal 
$G$-variety is \emph{horospherical}
if $G$ acts with an open orbit $G/H$ isomorphic to
a torus bundle over a rational homogeneous manifold, or equivalently,
if the isotropy subgroup $H$ of a general point contains the unipotent radical of a Borel subgroup $B$.
The dimension of the torus fiber is called the \emph{rank} of a horospherical variety.

The Bruhat decomposition of $G$ implies that horospherical varieties are spherical (see Section 5.3 of Perrin \cite{Perrin}).
Toric varieties and rational homogeneous manifolds are the well-known examples of horospherical varieties.
Furthermore, we know that all smooth Schubert varieties in rational homogeneous manifolds of Picard number 1 are horospherical varieties
from the classification result of Hong-Mok \cite{HoM2}, Hong \cite{Ho15} and Hong-Kwon \cite{HK}.

Let $\{ \alpha_1, \cdots, \alpha_n\}$ be a 
system of simple roots of $G$ following the standard numbering (e.g. Humphreys \cite{Humphreys})
and $P(\alpha_i)$ denote the maximal parabolic subgroup associated to a simple root $\alpha_i$.
For the corresponding system $\{ \omega_1, \cdots, \omega_n\}$ of fundamental weights,
$V(\omega_i)$ denotes the irreducible $G$-representation space
with the $i$-th fundamental weight $\omega_i$ as a highest weight.
When we take a highest weight vector $v_i$ of $V(\omega_{i})$,
the $G$-orbit of $[v_i]$ in $\mathbb P( V(\omega_i))$ is closed and isomorphic to the rational homogeneous manifold $G/P(\alpha_i)$
which is denoted by $(G, \alpha_i)$.

If $v_i$ and $v_j$ are highest weight vectors of
$V(\omega_i)$ and $V(\omega_j)$ respectively,
we will consider the closure of the $G$-orbit of the point $[v_i + v_j]$
in $\mathbb P( V(\omega_i) \oplus V(\omega_j) )$.
For any $i \neq j$,
the open orbit $G.[v_i + v_j]$ is isomorphic to a $\mathbb C^*$-bundle over a rational homogeneous manifold $G/(P(\alpha_i)\cap P(\alpha_j))$.
According to Propositon 2.1 of Hong \cite{Ho16},
since the closure of $G.[v_i + v_j]$ in $\mathbb P( V(\omega_i) \oplus V(\omega_j) )$ is a normal variety,
$\overline{G.[v_i + v_j]}$ is a horospherical $G$-variety and we denote it by $(G, \alpha_i, \alpha_j)$.
The smooth projective horospherical varieties of Picard number 1 are classified by Pasquier \cite{Pasquier}
using the fact that any nonlinear smooth horospherical variety of Picard number 1 is of the form $(G, \alpha_i, \alpha_j)$.

\begin{proposition}[Theorem 0.1 of Pasquier \cite{Pasquier}]
\label{horosphercal}
Let $G$ be a connected reductive algebraic group.
A smooth projective horospherical $G$-variety $X$ of Picard number 1
is either homogeneous or horospherical of rank 1.
In the nonhomogeneous case,
its automorphism group $\Aut(X)$ is a connected non-reductive linear algebraic group acting with exactly two orbits $X_0$ and $Z$;
moreover, $X$ is uniquely determined by
its two closed $G$-orbits $Y\subset X_0$ and $Z$,
isomorphic to rational homogeneous manifolds $G/P_Y$ and $G/P_Z$, respectively,
where $(G, P_Y, P_Z)$ is one of the triples in the following list:
\begin{enumerate}
\item[\rm (1)] $(B_n, P(\alpha_{n-1}), P(\alpha_n))$ with $n \geq 3$;
\item[\rm (2)] $(B_3, P(\alpha_1), P(\alpha_3))$;
\item[\rm (3)] $(C_n, P(\alpha_{k}), P(\alpha_{k-1}))$
 with $n \geq k \geq 2$;
\item[\rm (4)] $(F_4, P(\alpha_2), P(\alpha_3))$;
\item[\rm (5)] $(G_2, P(\alpha_2), P(\alpha_1))$.
\end{enumerate}
\end{proposition}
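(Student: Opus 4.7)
The plan is to apply the Luna--Vust theory of spherical embeddings in its horospherical specialization. A horospherical $G$-variety with open orbit $G/H$ is classified by a colored fan in the rational vector space $N_{\mathbb Q}$ dual to the character group of $H$, together with a distinguished subset of colors (the $B$-stable, non-$G$-stable prime divisors of $G/H$). The $G$-orbits in $X$ are in bijection with the colored cones of the fan, and $\Pic(X)$ fits into a short exact sequence involving the rays of the fan, the used colors, and the character lattice.

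First I would reduce to the case where the rank $r := \dim N_{\mathbb Q}$ equals $1$. For a smooth projective nonhomogeneous $X$ of Picard number $1$, the exact sequence for $\Pic(X)$, combined with the classification of smooth toric varieties of Picard number $1$ as projective spaces, forces $r \leq 1$: for $r \geq 2$ the contraction to the toric case produces extra independent divisor classes. Hence the colored fan sits on the real line and has exactly two rays, giving exactly two closed $G$-orbits $Y$ and $Z$, each a complete rational homogeneous manifold $G/P_Y$ or $G/P_Z$ containing a common fixed point of a Borel $B$. Up to conjugacy this reduces the data to a pair of standard maximal parabolics $(P_Y, P_Z)$, and $X$ is reconstructed as the closure of a $G$-orbit in the projectivization of a sum of two irreducible highest-weight representations, mirroring the construction of $(G, \alpha_i, \alpha_j)$ in Section~2.1.

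The main obstacle is the smoothness classification of these pairs. Applying the local structure theorem for spherical varieties, smoothness of $X$ at a torus-fixed point of the closed orbit $Z$ reduces to the smoothness of an explicit affine slice on which a Levi subgroup of $P_Y \cap P_Z$ acts linearly; the weights of this slice representation are computable in terms of root data. The resulting condition is very restrictive, and a case-by-case enumeration over all simple Lie types and all unordered pairs of maximal parabolics leaves only the five families listed. Once these cases are isolated, the structure of $\Aut(X)$ as a connected non-reductive group with exactly two orbits $X_0$ and $Z$, and the uniqueness of $X$ from $(Y, Z)$, follow from the colored-fan description together with the observation that in each of the five cases the two rays of the fan are not exchanged by any automorphism of the root datum: one ray is therefore distinguished as giving the open $\Aut(X)^{\circ}$-orbit and the other as the unique closed one.
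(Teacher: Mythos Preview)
The paper does not give its own proof of this proposition: it is quoted as Theorem~0.1 of Pasquier~\cite{Pasquier} and used as a black-box input, with no \texttt{proof} environment following the statement. There is therefore nothing in the paper to compare your argument against.

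As for the proposal itself, your outline is broadly faithful to Pasquier's actual strategy: the Luna--Vust description of horospherical embeddings via colored fans, the reduction of the Picard-number-$1$ smooth projective case to rank~$1$, the reconstruction of $X$ as $\overline{G.[v_i+v_j]}$ in $\mathbb P(V(\omega_i)\oplus V(\omega_j))$, and the smoothness test via the local structure theorem on an affine slice. However, two steps that carry essentially all the content are only asserted. First, the rank reduction (``for $r\geq 2$ the contraction to the toric case produces extra independent divisor classes'') is not an argument as written; in Pasquier's paper this comes from a careful analysis of the Picard group exact sequence together with the combinatorics of colors, not from a toric contraction. Second, the ``case-by-case enumeration over all simple Lie types and all unordered pairs of maximal parabolics'' is precisely the heart of Pasquier's classification and occupies most of his paper; saying it ``leaves only the five families listed'' is a statement of the result, not a proof. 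If you intend this as a sketch rather than a proof, it is an accurate roadmap; if it is meant to stand on its own, those two steps are genuine gaps.
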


\begin{proposition}[Theorem 1.11 of Pasquier \cite{Pasquier}]
\label{automorphism groups of horosphercal}
In the above cases (1) -- (5), the automorphism group of $X$ is isomorphic to
$(\SO(2n+1) \times \mathbb C^*) \ltimes V(\omega_n)$,
$(\SO(7) \times \mathbb C^*) \ltimes V(\omega_3)$,
$((\Sp(2n) \times \mathbb C^*)/\{\pm 1\}) \ltimes V(\omega_1)$,
$(F_4 \times \mathbb C^*) \ltimes V(\omega_4)$ and
$(G_2 \times \mathbb C^*) \ltimes V(\omega_1)$, respectively.
\end{proposition}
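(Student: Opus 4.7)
The plan is to determine $\Aut^0(X)$ by combining an explicit construction of one-parameter subgroups with an upper bound coming from the Lie algebra $\mathfrak{aut}(X) = H^0(X, TX)$. The reductive part $G \times \mathbb{C}^*$ is immediate from the realization $X = \overline{G \cdot [v_i + v_j]} \subset \mathbb{P}(V(\omega_i) \oplus V(\omega_j))$, with $\mathbb{C}^*$ rescaling one of the two summands, so the task reduces to producing a unipotent radical isomorphic to the stated representation $V(\omega_*)$ and checking that no further Lie algebra summands appear.

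For the unipotent part I would exploit the two-orbit structure under $\Aut^0(X)$: writing $X = X_0 \sqcup Z$ with $Y \subset X_0$, one sees in each of the five cases that $X \setminus Z$ carries the structure of a $G$-equivariant affine bundle over $Y = G/P_Y$ whose fiber, viewed as a $P_Y$-module and induced up to $G$, is exactly the representation $V(\omega_*)$ listed in the statement. Fiberwise translation by elements of $V(\omega_*)$ then extends to global automorphisms of $X$ that fix $Z$ pointwise, providing the required abelian unipotent subgroup, and the conjugation action of $G \times \mathbb{C}^*$ on these translations is the obvious one, yielding the semidirect product. To bound $\mathfrak{aut}(X)$ from above I would compute $H^0(X, TX)$ using the restriction sequence
\[
0 \to TZ \to TX|_Z \to N_{Z/X} \to 0
\]
combined with Bott-Borel-Weil on $Z = G/P_Z$; sphericity of $X$ forces each $G$-isotypic component of $H^0(X, TX)$ to appear with multiplicity at most one, which is what pins the answer down. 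In case (3), the kernel of $\Sp(2n) \times \mathbb{C}^* \to \Aut^0(X)$ is $\{\pm I\}$: since $-I \in \Sp(2n)$ acts as $-1$ on $V(\omega_1)$, it coincides with an element of $\mathbb{C}^*$ and must be quotiented out.

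The principal obstacle will be the case-by-case verification, particularly for the exceptional families $(B_3, \alpha_1, \alpha_3)$, $(F_4, \alpha_2, \alpha_3)$, and $(G_2, \alpha_2, \alpha_1)$, where the combinatorics do not obviously single out the specific fundamental weight $\omega_*$ appearing in the statement. Even granted the multiplicity bound from sphericity, identifying the correct representation requires a detailed analysis of the $P_Y$-module structure on the fiber of $X_0 \to Y$, as well as checking that the $\mathbb{C}^*$ grading is induced by the expected simple root rather than some other natural candidate; it is here that one has to work case by case rather than invoke a uniform argument.
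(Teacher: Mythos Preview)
The paper does not prove this proposition at all: it is stated as a citation of Theorem~1.11 of Pasquier \cite{Pasquier} and used as a black box, so there is no ``paper's own proof'' to compare against. Your sketch is broadly in the spirit of Pasquier's original argument (computing $H^0(X,TX)$ via Borel--Weil--Bott on the closed orbit and exploiting multiplicity-freeness from sphericity), but for the purposes of this paper no proof is required---simply citing the reference suffices.
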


Recently, Hong \cite{Ho16} showed that a smooth horospherical variety $X$ of Picard number 1
can be embedded as a linear section into a rational homogenous manifold of Picard number 1
except when $X$ is $(B_n, \alpha_{n-1}, \alpha_n)$ for $n \geq 7$.
For a description of their tangent space based on weights and roots, see Proposition 2.6 of Kim \cite{Kim}.

\begin{example} [Odd symplectic Grassmannian $(C_n, \alpha_{k}, \alpha_{k-1})$]
Let $V$ be a complex vector space endowed with
a skew-symmetric bilinear form $\omega$ of maximal rank.
We denote the variety of all $k$-dimensional isotropic subspaces in $V$ by
$\Gr_{\omega}(k, V)=\{ W \subset V 
: \dim W=k, \, \omega|_{W} \equiv 0 \}.$
When $\dim V$ is even, say, $2n$, the form $\omega$ is a nondegenerate symplectic form
and this variety $\Gr_{\omega}(k, 2n)$ is the usual symplectic Grassmannian,
which is homogeneous under the action of the symplectic group $\Sp(2n)$.
But when $\dim V$ is odd, say, $2n+1$,
the skew-form $\omega$ has a one-dimensional kernel.
The variety $\Gr_{\omega}(k, 2n+1)$,
called the \emph{odd symplectic Grassmannian},
is not homogeneous and
has two orbits under the action of its automorphism group if $2 \leq k \leq n$
(cf. Mihai \cite{Mihai} and Proposition 1.12 of Pasquier \cite{Pasquier}).
If $k=1$, then the isotropic condition holds trivially
so that $\Gr_{\omega}(1, V)$ is just the linear space $\mathbb P^{\dim V-1}$.
Next, for $k=n+1$ the odd symplectic Grassmannian $\Gr_{\omega}(n+1, 2n+1)$ is isomorphic to the symplectic Grassmannian $\Gr_{\omega}(n, 2n)$
because any $(n+1)$-dimensional isotropic subspace
must contain the one-dimensional kernel of $\omega$.
In what follows, we will assume that $2\leq k \leq n$
when considering the odd symplectic Grassmannians.

Let $S$ be an odd symplectic Grassmannian $\Gr_{\omega}(k, 2n+1)$ for $2\leq k \leq n$.
Then $S$ is a smooth Fano manifold of Picard number 1
and the automorphism group $\Aut(S)$ of $S$ is isomorphic to the semi-direct product
$((\Sp(2n) \times \mathbb C^*)/\{\pm 1\}) \ltimes \mathbb C^{2n}$.
We know that $S$ has two orbits under its automorphism group. The closed orbit
$\{ W \in \Gr_{\omega}(k, 2n+1) : 
\Ker \omega \subset W\}$
is isomorphic to the symplectic Grassmannian $\Gr_{\omega}(k-1, 2n)$
and the open orbit $\{ W \in \Gr_{\omega}(k, 2n+1) : 
\Ker \omega \not\subset W\}$
is isomorphic to the dual tautological sub-bundle
on the symplectic Grassmannian $\Gr_{\omega}(k, 2n)$.
In fact, choosing a supplementary subspace $V' \subset V$ so that $V=\Ker \omega \oplus V'$,
any $W \in \Gr_{\omega}(k, V) = \Gr_{\omega}(k, 2n+1)$ containing $\Ker \omega$
corresponds a point of $\Gr_{\omega}(k-1, V') = \Gr_{\omega}(k-1, 2n)$.
And the projection coming from the above decomposition gives
a map from the open orbit onto $\Gr_{\omega}(k, 2n)$ of which the fiber at a point $E \in\Gr_{\omega}(k, 2n)$ is $E^*$
(for details, see Proposition 4.3 of Mihai \cite{Mihai}).
Consequently, the odd symplectic Grassmannian $\Gr_{\omega}(k, 2n+1)$
has three orbits under the semisimple part $\Sp(2n)$ of its automorphism group.
In particular, the $\Sp(2n)$-closed orbit lying in the open orbit
is isomorphic to a symplectic Grassmannian $\Gr_{\omega}(k, 2n)$.
\end{example}

\subsection{Second fundamental form and Cartan-Fubini extension}

Let $V$ be a finite-dimensional vector space and let $\mathcal A
\subset \mathbb P(V)$ be a complex-analytic subvariety. Denote by
$\widetilde{\mathcal A} \subset V \backslash \{0\}$ the \emph{affine cone}
of $\mathcal A$, i.e., the pre-image $\pi^{-1}(\mathcal A)$ of the
canonical projection $\pi\colon V \backslash\{0\} \rightarrow
\mathbb P(V)$.
For a smooth point $\eta \in \widetilde{\mathcal A}$, the \emph{second fundamental form}
$$\sigma_{\eta} \colon T_{\eta} \widetilde{\mathcal A}
\times T_{\eta} \widetilde{\mathcal A}\rightarrow V/ T_{\eta}
\widetilde{\mathcal A}$$ of $\widetilde{\mathcal A}\subset V$ at
$\eta \in \widetilde{\mathcal A}$ is defined by $\sigma_{\eta}(\xi,
\zeta)=\nabla_{\xi}\hat{\zeta} \mod T_{\eta} \widetilde{\mathcal A}$
for any $\xi, \zeta \in T_{\eta} \widetilde{\mathcal A}$, where
$\hat{\zeta}$ is a local vector field with
$\hat{\zeta}(\eta)=\zeta$, and $\nabla$ is the Euclidean flat
connection on the Euclidean space $V$. Another definition is given
by the differential of the \emph{Gauss map}
$$\Gamma \colon \widetilde{\mathcal A} \rightarrow \Gr(d, V), \quad
\beta \in \widetilde{\mathcal A} \mapsto
[T_{\beta}\widetilde{\mathcal A}] \in \Gr(d, V)$$ at $\eta$,
where $d=\dim \mathcal A +1$.
The differential of the Gauss map $\Gamma$
at $\eta \in \widetilde{\mathcal A}$ is a linear map
$$d\Gamma_{\eta} : T_{\eta}\widetilde{\mathcal A} \rightarrow
T_{[T_{\eta}\widetilde{\mathcal A}]}\Gr(d, V) \cong
\Hom(T_{\eta}\widetilde{\mathcal A}, V / T_{\eta}\widetilde{\mathcal A}).$$
The canonical isomorphism $\chi$ between the
tangent space $T_{[W]} \Gr(d, V)$ of a Grassmannian and $\Hom(W,
V/W)$ is given by $\xi\mapsto \chi_{\xi}$ with $\chi_{\xi}(w):=
\rho'(0)+W$, where $\rho\colon D\to V$ is a \emph{moving vector
field} with $\rho(0)=w$ along a holomorphic path $\hat{\xi}$ from a
connected open subset $D\subset \mathbb C$ into $\Gr(d, V)$ such
that $\hat{\xi}(0)=[W]$ and $\hat{\xi}'(0)=\xi$.
Here, $\rho\colon
D\to V$ is called a moving vector field along a holomorphic path
$\hat{\xi}$ if $\rho(t)\in \hat{\xi}(t)$ for every $t\in D$.
If we use this canonical isomorphism, the differential of the Gauss map is
described as follows.
For $\xi, \zeta \in T_{\eta} \widetilde{\mathcal A}$ we choose the following gadgets:

\begin{itemize}
  \item a holomorphic path $\alpha \colon D \to \widetilde{\mathcal A}$
  with $\alpha(0) = \eta$ and $\alpha'(0) = \xi$,
  \item a vector field $\rho\colon D\to V$ along $\alpha$ with
  $\rho(0)=\zeta$,
  i.e., $\rho(t)\in T_{\alpha(t)}\widetilde{\mathcal A}$ for every $t\in D$.
\end{itemize}
If we set $\hat{\xi}:=\Gamma \circ \alpha \colon D \to \Gr(d, V)$,
then $\hat{\xi}(0)=[T_{\eta} \widetilde{\mathcal A}]$ and
$\hat{\xi}'(0)=d\Gamma_{\eta}(\xi)\in
\Hom(T_{\eta}\widetilde{\mathcal A}, V / T_{\eta}\widetilde{\mathcal
A})$ under the canonical isomorphism. Since $\rho$ is a moving
vector field along $\hat{\xi}$,
$$d\Gamma_{\eta}(\xi)(\zeta)=\hat{\xi}'(0)(\zeta)
=\rho'(0)+T_{\eta}\widetilde{\mathcal A} \in V
/T_{\eta}\widetilde{\mathcal A}.$$ Restating the above construction,
we have obtained a symmetric bilinear map, the second fundamental
form, $\sigma_{\eta}(\alpha'(0), \rho(0))=\rho'(0)+T_{\eta}\widetilde{\mathcal A}$
for every holomorphic path $\alpha$ in $\widetilde{\mathcal A}$ with
$\alpha(0)=\eta$ and every moving vector field $\rho$ along
$\alpha$.

For a subspace $E$ of $T_{\eta} \widetilde{\mathcal A}$
we define
$$\Ker \sigma_{\eta}(\,\cdot\,, E ):=
\{ \zeta \in T_{\eta}\widetilde{\mathcal A} : \sigma_{\eta}(\zeta ,
\xi)=0, \, \forall \xi \in E \}.$$
From the fact that $\widetilde{\mathcal A}$ is a cone with the vertex at $0$,
it follows that $\sigma_{\eta}(\eta, \xi)=0$ for any $\xi \in
T_{\eta}\widetilde{\mathcal A}$. In particular, $\mathbb C \eta$ is
contained in $\Ker \sigma_{\eta}(\,\cdot\,, E)$ for any subspace $E$
of $T_{\eta} \widetilde{\mathcal A} $. At $[\eta]=\pi(\eta) \in
\mathcal A$ the tangent space $T_{[\eta]}\mathcal A$ is given by
$(T_{\eta}\widetilde{\mathcal A}/\mathbb C \eta) \otimes (\mathbb C
\eta)^*$. Thus the second fundamental form $\sigma_{\eta} \colon
T_{\eta} \widetilde{\mathcal A} \times T_{\eta} \widetilde{\mathcal
A} \rightarrow V/ T_{\eta} \widetilde{\mathcal A} $ of
$\widetilde{\mathcal A}$ at $\eta$ induces the projective second fundamental
form $\overline{\sigma}_{[\eta]}\colon T_{[\eta]}\mathcal A \times
T_{[\eta]}\mathcal A \rightarrow T_{[\eta]}\mathbb
P(V)/T_{[\eta]}\mathcal A$ of $\mathcal A$ at $[\eta]$.
From now on we will use the notation $\sigma_{[\eta]}$ instead of $\overline{\sigma}_{[\eta]}$
for the sake of convenience.
For a subspace $\overline{E}$ of $T_{[\eta]}\mathcal A$ we define $\Ker
\sigma_{[\eta]}(\,\cdot\,, \overline{E} )$ by $\{ \overline{\zeta}
\in T_{[\eta]} \mathcal A : \sigma_{[\eta]}(\overline{\zeta},
\overline{\xi})=0,\, \forall \overline{\xi} \in \overline{E} \}$.

\begin{definition}
Let $(X, \mathcal K)$ and $(Z, \mathcal H)$ be two polarized uniruled projective manifolds
equipped with a minimal rational component.
For a holomorphic 
immersion $f \colon U \rightarrow X$ from an open subset $U$ of $Z$ in the analytic topology,
we say that $f$ is {\it nondegenerate with respect to $(\mathcal K, \mathcal H)$} if
\begin{enumerate}
\item[{\rm (1)}] its image $f(U)$ is not contained in the bad locus of $\mathcal K$,
which means the smallest subvariety $B$ of $X$ such that
for all $x \in X \backslash B$, any minimal rational curve passing through $x$ is free and a general minimal rational curve passing through $x$ is standard, and

\item[{\rm (2)}] for a general point $z \in U$ and a general smooth point $\alpha \in \widetilde{\mathcal C}_z(Z)$,
$df(\alpha)$ is a smooth point of $\widetilde{\mathcal C}_{f(z)}(X)$
and $$\Ker \sigma_{df(\alpha)}( \,\cdot \, ,T_{df(\alpha)}(df(\widetilde{\mathcal C}_z(Z) )))=\mathbb C df(\alpha),$$
where $\sigma_{df(\alpha)}
$ denotes the second fundamental form of the affine cone
$\widetilde{\mathcal C}_{f(z)}(X) \subset T_{f(z)}X
$ at $df(\alpha)$.
\end{enumerate}
\end{definition}

Now, as the main ingredient for the proof of Theorem \ref{Main theorem},
we state the non-equidimensional Cartan-Fubini type extension theorem,
which says the rational extension of germs of holomorphic maps
respecting varieties of minimal rational tangents.
For an introductory exposition on an analytic continuation along minimal rational curves and Cartan-Fubini extension,
we refer to Section 2 of Mok \cite{Mk16}.

\begin{proposition}[Theorem 1.1 of Hong-Mok \cite{HoM1}]
\label{Non-equidimensional Cartan-Fubini type extension theorem}
Let
$(X,\mathcal K)$ and $(Z,\mathcal H)$ be two uniruled projective
manifolds equipped with a minimal rational component. Assume that
$Z$ is of Picard number 1 and that $\mathcal C_z(Z)$ is
positive-dimensional at a general point $z \in Z$. Let $f \colon U
\to X$ be a holomorphic immersion defined on a connected open subset
$U \subset Z$. If $f$ respects varieties of minimal rational
tangents and is nondegenerate with respect to $(\mathcal K,\mathcal
H)$, then $f$ extends to a rational map $F \colon Z \to X$.
\end{proposition}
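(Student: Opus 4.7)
The overall strategy is to extend $f$ by analytic continuation along chains of minimal rational curves in $Z$. Because $Z$ has Picard number $1$ and $\mathcal{C}_z(Z)$ is positive-dimensional at a general point, $Z$ is chain-connected by members of $\mathcal{H}$, so an extension well-defined along such chains will automatically cover a Zariski-dense subset of $Z$; a Chow-scheme argument on the graph will then upgrade this to a rational map.

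The first and central task is to show that $f$ sends each general minimal rational curve of $Z$ into a minimal rational curve of $X$. Fix a general $z \in U$ and a general $\ell \in \mathcal{H}$ through $z$ with tangent direction $\alpha \in \mathcal{C}_z(Z)$. The respect condition places $df(\alpha)$ in $\mathcal{C}_{f(z)}(X)$, so there is a minimal rational curve $\widetilde{\ell} \in \mathcal{K}$ through $f(z)$ with tangent $df(\alpha)$; by the avoidance-of-the-bad-locus clause in the nondegeneracy hypothesis, $\widetilde{\ell}$ can be taken standard and (locally) unique. Parametrizing $\ell \cap U$ by $z(t)$, I would compare the moving subvariety $df(\widetilde{\mathcal{C}}_{z(t)}(Z)) \subset \widetilde{\mathcal{C}}_{f(z(t))}(X)$ with the parallel transport of $\widetilde{\mathcal{C}}(X)$ along $\widetilde{\ell}$ recalled in Section 2.2. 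The kernel hypothesis $\Ker \sigma_{df(\alpha)}(\,\cdot\,, T_{df(\alpha)}(df(\widetilde{\mathcal{C}}_z(Z)))) = \mathbb{C}\, df(\alpha)$ is exactly the infinitesimal rigidity needed to pin down the image: it says that, among all first-order motions of $df(\widetilde{\mathcal{C}}_z(Z))$ inside $\widetilde{\mathcal{C}}_{f(z)}(X)$, only the scaling in the $df(\alpha)$-direction is compatible with the second fundamental form of the affine VMRT at the smooth point $df(\alpha)$. Combined with the respect condition holding at each $z(t)$, this forces $f(\ell \cap U) \subset \widetilde{\ell}$.

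With curves mapping into curves, I would extend $f$ from a neighborhood of $z$ to a neighborhood of $\ell$ by defining it on each minimal rational curve meeting $\ell$ via the same curve-to-curve rule; uniqueness of the target curve guarantees agreement on overlaps. Iterating along chains in $\mathcal{H}$ produces a (possibly multi-valued) holomorphic extension on a Zariski-dense open $Z_0 \subset Z$. Single-valuedness follows by deforming any two chains from $z \in U$ to $w \in Z_0$ within the irreducible moduli of such chains of bounded length, keeping the endpoint fixed, and invoking the first step to see that the extension is locally constant in the chain parameter. Taking the closure of the graph in $Z \times X$ and using algebraicity of the universal family of minimal rational curves, one obtains the rational map $F \colon Z \to X$.

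The main obstacle is the rigidity assertion of the second step. In the equidimensional Cartan-Fubini setting the target VMRT and source VMRT have the same dimension and the second fundamental form argument is classical; here the image $df(\widetilde{\mathcal{C}}_z(Z))$ is a proper subvariety of $\widetilde{\mathcal{C}}_{f(z)}(X)$, so one must reconstruct the missing directions from the restricted second fundamental form. The nondegeneracy hypothesis is precisely the invariant that makes this reconstruction possible, and propagating the infinitesimal statement to an honest statement about $f(\ell \cap U)$ by integrating along $z(t)$ is the technical heart of the proof.
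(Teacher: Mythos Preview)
The paper does not give its own proof of this proposition: it is quoted verbatim as Theorem~1.1 of Hong--Mok~\cite{HoM1} and used as a black box. The only commentary the authors provide is the sentence following the statement, indicating that to apply the result one must verify nondegeneracy of the relevant pair of VMRTs and then invoke the proposition together with parallel transport along minimal rational curves. There is therefore no in-paper proof to compare your proposal against.

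That said, your sketch does capture the architecture of the Hong--Mok argument as it is understood in the literature: the nondegeneracy hypothesis is precisely what allows one to prove that $f$ sends general minimal rational curves of $Z$ into minimal rational curves of $X$ (this is the content of Proposition~2.1 of \cite{HoM1}, which the present paper also cites explicitly in the proof of Theorem~\ref{Main theorem}); chain-connectedness from Picard number~1 then permits analytic continuation along $\mathcal H$-chains; and a graph-closure/Chow argument yields the rational extension. One small caution: your account of single-valuedness via deforming chains within an irreducible moduli space is the right idea, but in the actual Hong--Mok proof this step is handled with some care using the algebraicity of the universal family and the fact that the extension is determined by the image of a germ, not merely by homotopy-type considerations. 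If you intend to reconstruct the proof rather than cite it, that is the place where more detail would be required.
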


To use this result, we need to compute the second fundamental form of
the variety of minimal rational tangents as subvariety in the projective tangent space
and to check the nondegeneracy of the pair of varieties of minimal rational tangents
(Proposition \ref{nondegenerate: symplectic} and Proposition \ref{nondegenerate: F_4}).
Then we can apply the non-equidimensional Cartan-Fubini type extension theorem
and get a rational extension $F\colon Z\rightarrow X$ of $f$.
Up to the action of $\Aut(X)$,
$F(x_0)=x_0$ and $ \mathcal C_{x_0}(F(Z))=\mathcal C_{x_0}(Z)$
for a fixed general point $x_0 \in U \subset Z$.
Since $f$ sends minimal rational curves in $Z$ to minimal rational curves in $X$
and the tangency property of the two VMRTs of $Z$ and $F(Z)$ at an intersection point
does imply equality of these VMRTs in the case of smooth Schubert varieties in a rational homogeneous manifold of Picard number 1,
as established in the next sections,
we can extend the map inductively along minimal rational curves.
Consequently, $F$ is the identity map up to the action of $\Aut(X)$.

\section{Smooth Schubert varieties in symplectic Grassmannians}

Let $G$ be a connected simple Lie group of type $C_{\ell}$ and
let $X$ be a rational homogeneous manifold $G/P$ associated to a simple root $\alpha_k$ ($1 \leq k \leq \ell$).
Then $X$ is the \emph{symplectic Grassmannian} $\Gr_{\omega}(k, 2\ell)$ of isotropic $k$-subspaces in $V=\mathbb C^{2\ell}$
with respect to a \emph{symplectic form} $\omega$ on $\mathbb C^{2\ell}$,
where the symplectic form means a nondegenerate skew-symmetric bilinear form on $V$.
Take a basis $\{ e_1, \, \cdots, e_{2 \ell} \}$ of $V$
such that $\omega(e_{\ell-i}, e_{\ell+i+1}) = - \omega(e_{\ell+i+1}, e_{\ell-i}) = 1$ for $0 \leq i \leq \ell-1$,
and all other $\omega(e_i, e_j)$ are zero.
Define $F_j \subset V$ as the subspace generated by $e_1, \, \cdots, e_j$ for $1 \leq j \leq 2\ell$ and set $F_0=\{0\}$.
Then $F^{\perp}_{\ell-i} = F_{\ell+i}$ for $0 \leq i \leq \ell$.
The symplectic group $G=\Sp(V)$ naturally acts on $\Gr_{\omega}(k, 2\ell)$
and the parabolic subgroup $P$ is the isotropy subgroup of $G$ at $[F_k]$.
If $k=1$, then the isotropic condition holds trivially
so that $\Gr_{\omega}(1, 2\ell)$ is just the linear space $\mathbb P^{2\ell-1}$.
On the other hand, if $k=\ell$, a rational homogeneous manifold associated to the long simple root $\alpha_{\ell}$ is
the Lagrangian Grassmannian $\Gr_{\omega}(\ell, 2\ell)$
of which any smooth Schubert variety is a homogeneous submanifold associated to a subdiagram of the marked Dynkin diagram of $\Gr_{\omega}(\ell, 2\ell)$
by Lakshmibai-Weyman \cite{LW}, Brion-Polo \cite{BrP} and Hong-Mok \cite{HoM2}.
In what follows, we will assume that $1 < k < \ell$.

Fix a $k$-dimensional isotropic subspace $E \subset V$.
Since we can view $X$ as a subvariety of the
Grassmannian $\Gr(k, V)$ of $k$-dimensional subspaces in $V$ and
the tangent space of 
$\Gr(k, V)$ at $[E]$ is naturally isomorphic to
$\Hom(E, V/E)$, 
we have
$$T_{[E]} X = \{ h \in \Hom(E, V/E) \colon
\omega(h(e_1), e_2) + \omega(e_1, h(e_2)) = 0, \, \forall e_1, e_2
\in E \}.$$
Putting $E^{\perp} := \{v \in V : \omega(v, e) = 0,\,
\forall e \in E \}$, $E^{\perp}$ is a subspace of dimension $2\ell-k$
containing $E$ because $E$ is an isotropic subspace.
From the
nondegeneracy of $\omega$, the isomorphism $V/E^{\bot} \cong E^*$ is
induced by the symplectic form $\omega$.
Then, under the map $\psi \colon E^* \otimes V/E \to E^* \otimes V/E^{\perp} \cong E^* \otimes E^*$ which is composition of projection and the isomorphism $V/E^{\bot} \cong E^*$,
the tangent space of the symplectic Grassmannian $\Gr_{\omega}(k, 2\ell)$ at $[E]$ is
the inverse image $\psi^{-1}(S^2 E^*)$ of the symmetric square $S^2 E^* \subset E^* \otimes E^*$
and can be identified with
$$T_{[E]} \Gr_{\omega}(k, 2\ell)=(E^* \otimes (E^{\bot}/E))\oplus S^2 E^*. $$

Minimal rational curves of $X$ are lines of $\Gr(k, 2\ell)$ lying on $X$.
Thus, the variety $\mathcal C_{[E]}(X)$ of minimal rational tangents of $X$ at a
point $[E]\in X$ is the variety of decomposable tensors in $T_{[E]} X$.
From now on, we take the standard inner product on $E^*$ associated with Lie group $\SO(E^*)$, which gives the correspondence $e^* \mapsto e$ between $E^*$ and $E$.
If a decomposable tensor $h=e^*\otimes v$ is contained in $T_{[E]} X \subset E^* \otimes V/E$, then
\begin{eqnarray*}
\omega(v, e')
&=& \omega(h(e), e') = -\omega(e, h(e')) \\
&=& -\omega(e, (e^*\otimes v)e') = -\omega(e, e^*(e')v) \\
&=& -\omega(e, v) e^*(e') \quad \mbox{for all } e'\in E,
\end{eqnarray*}
that is, $\omega(v, \cdot) |_E 
\in \mathbb C e^*$.
Conversely, if $\omega(v, \cdot)|_E \in \mathbb C e^*$,
then $e^*\otimes v$ is contained in $T_{[E]} X$.
Therefore, the affine cone of
$\mathcal C_{[E]}(X)$
is
$$\widetilde{\mathcal C}_{[E]}(X)=\{e^*\otimes v\in E^*\otimes(V/E)
: 
\omega(v, \cdot) |_E \in \mathbb C e^* \} \backslash \{0\}.$$

By Proposition 3.2.1 of Hwang-Mok \cite{HM05} or Corollary 5.5 of Landsberg-Manivel \cite{LM},
the variety $\mathcal A$ of minimal rational tangents of $\Gr_{\omega}(k, 2\ell)$ at a
point $[E]\in \Gr_{\omega}(k, 2\ell)$ is the projectivization of the affine cone
\begin{eqnarray*}
\widetilde{\mathcal A} = \{ u \otimes q + c u^2 : u \in U, q \in Q, c \in \mathbb C \} \backslash \{0\}
 \subset (U \otimes Q) \oplus S^2U,
\end{eqnarray*}
where $U=E^*$ and $Q=E^{\perp}/E$.
Under the projection $\mathcal A \rightarrow \mathbb P(U)=\mathbb P^{k-1}$
 defined by $u \otimes q + cu^2 \mapsto u$,
 $\mathcal A$ becomes a $\mathbb P^{2m}$-bundle over $\mathbb P^{k-1}$, where $m=\ell-k$.

For integers $a, b$ with $0 \leq a < k < b \leq 2\ell-a$,
define $$\Gr_{\omega}(k, 2\ell; F_a, F_b):=\{ E \in \Gr_{\omega}(k, 2\ell) : F_a \subset E \subset F_b \},$$
where $F_j \subset V$ is the subspace generated by $e_1, \cdots, e_j$.
Recently, Hong \cite{Ho15} has classified smooth Schubert varieties in the symplectic Grassmannian $\Gr_{\omega}(k, 2\ell)$.
From this result, all smooth Schubert varieties are of this form satisfying certain condition:

\begin{lemma} \label{smooth Schubert variety: symplectic}
Smooth Schubert varieties of the symplectic Grassmannian $\Gr_{\omega}(k, 2\ell)$
are of the form $\Gr_{\omega}(k, 2\ell; F_a, F_b)$,
where one of the following holds:
\begin{enumerate}
\item[{\rm (1)}] $0 \leq  a < k$ and $(k < b \leq  \ell \mbox{ or } b = 2\ell - a)$;
a homogeneous submanifold associated to a subdiagram of the marked Dynkin diagram
corresponding to the symplectic Grassmannian $\Gr_{\omega}(k, 2\ell)$,

 \begin{picture} (180,40)
 \put(30,20){\circle{5}} 
 \put(50,20){\circle{5}} 
 \put(65,18){$\cdot$} 
 \put(70,18){$\cdot$} 
 \put(75,18){$\cdot$} 
 \put(90,20){\circle{5}} 
 \put(106,17){$\times$} 
 \put(130,20){\circle{5}} 
 \put(145,18){$\cdot$} 
 \put(150,18){$\cdot$} 
 \put(155,18){$\cdot$} 
 \put(170,20){\circle{5}} 
 \put(190,20){\circle{5}} 

{\color{blue}
 \put(77,5){\line(0,1){30}} 
 \put(77,5){\line(1,0){83}}
 \put(160,35){\line( 0,-1){30}}
 \put(160,35){\line(-1,0){83}}
 }

 \put(32.5,20){\line(1,0){15}} 
 \put(52.5,20){\line(1,0){10}} 
 \put(79.5,20){\line(1,0){8}} 
 \put(92.5,20){\line(1,0){15}} 
 \put(112.5,20){\line(1,0){15}} 
 \put(132.5,20){\line(1,0){10}} 
 \put(159.5,20){\line(1,0){8}} 
 \put(172.5,19){\line(1,0){15}} 
 \put(172.5,21){\line(1,0){15}} 

 \put(179,17.5){$<$} 

 \put(220,20){$(\mbox{when } k<b\leq \ell)$}
 \end{picture}

 \begin{picture} (180,40)
 \put(30,20){\circle{5}} 
 \put(50,20){\circle{5}} 
 \put(65,18){$\cdot$} 
 \put(70,18){$\cdot$} 
 \put(75,18){$\cdot$} 
 \put(90,20){\circle{5}} 
 \put(106,17){$\times$} 
 \put(130,20){\circle{5}} 
 \put(145,18){$\cdot$} 
 \put(150,18){$\cdot$} 
 \put(155,18){$\cdot$} 
 \put(170,20){\circle{5}} 
 \put(190,20){\circle{5}} 

{\color{blue}
 \put(77,5){\line(0,1){30}} 
 \put(77,5){\line(1,0){123}}
 \put(200,35){\line( 0,-1){30}}
 \put(200,35){\line(-1,0){123}}
 }

 \put(32.5,20){\line(1,0){15}} 
 \put(52.5,20){\line(1,0){10}} 
 \put(79.5,20){\line(1,0){8}} 
 \put(92.5,20){\line(1,0){15}} 
 \put(112.5,20){\line(1,0){15}} 
 \put(132.5,20){\line(1,0){10}} 
 \put(159.5,20){\line(1,0){8}} 
 \put(172.5,19){\line(1,0){15}} 
 \put(172.5,21){\line(1,0){15}} 

 \put(179,17.5){$<$} 

 \put(220,20){$(\mbox{when } b=2\ell-a)$}
 \end{picture}
\item[{\rm (2)}] $0 \leq a < k$ and $b = 2\ell - a - 1$; an odd symplectic Grassmannian $(C_{\ell-1}, \alpha_{k-a}, \alpha_{k-a-1})$,
\item[{\rm (3)}] $a = k - 1$ and $\ell + 1 \leq b \leq 2\ell - k 
$; a linear space $\mathbb P^{b-k}$.
\end{enumerate}
\end{lemma}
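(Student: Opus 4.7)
The plan is to extract the statement from Hong's classification \cite{Ho15} of smooth Schubert varieties in the symplectic Grassmannian $\Gr_{\omega}(k, 2\ell)$, and to re-express each smooth Schubert variety on Hong's list in the flag-sandwich form $\Gr_{\omega}(k, 2\ell; F_a, F_b)$.

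The first step is to set up the indexing. Schubert varieties $X_I$ in $\Gr_{\omega}(k, 2\ell)$ correspond to admissible $C_{\ell}$-index sequences $I=(i_1<\cdots<i_k)\subset\{1,\ldots,2\ell\}$, via intersection conditions of the form $\dim(E\cap F_{i_j})\geq j$. The crucial translation is to show that Hong's combinatorial smoothness criterion forces the nontrivial incidence conditions to reduce to a single flag-sandwich $F_a\subset E\subset F_b$ for uniquely determined $(a,b)$ with $0\leq a<k<b$; the bound $b\leq 2\ell-a$ is automatic because any isotropic extension of $F_a$ must lie in $F_a^{\perp}=F_{2\ell-a}$.

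Next, I would split into cases according to the position of $F_b$ relative to the Lagrangian $F_\ell$ and the perpendicular $F_{2\ell-a}$, by analyzing the induced skew form $\overline{\omega}$ on the quotient $F_b/F_a$. If $b\leq \ell$ then $F_b$ is isotropic and $\Gr_{\omega}(k,2\ell;F_a,F_b)\cong\Gr(k-a,b-a)$; if $b=2\ell-a$ then $\overline{\omega}$ is nondegenerate, so the Schubert variety equals the symplectic Grassmannian $\Gr_{\overline{\omega}}(k-a,2(\ell-a))$. Both are homogeneous submanifolds cut out by subdiagrams of the marked Dynkin diagram of $\Gr_\omega(k,2\ell)$, giving case (1). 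If $b=2\ell-a-1$ and $k-a\geq 2$, then $\overline{\omega}$ has a one-dimensional kernel, so the quotient $F_b/F_a$ carries the structure of an odd symplectic space of dimension $2(\ell-a)-1$ and the variety is the odd symplectic Grassmannian $(C_{\ell-1},\alpha_{k-a},\alpha_{k-a-1})$, giving case (2). For the remaining values of $b$ the corank of $\overline{\omega}$ forces $X_I$ to be singular, except when $a=k-1$, in which case $E$ is determined by a single line $\mathbb{C}v\subset F_b/F_{k-1}$ and the isotropic condition $v\in F_{k-1}^{\perp}=F_{2\ell-k+1}$ is automatic for $b\leq 2\ell-k+1$; this yields $X_I\cong\mathbb{P}(F_b/F_{k-1})=\mathbb{P}^{b-k}$ in the sub-range $\ell+1\leq b\leq 2\ell-k$ not already absorbed into cases (1) and (2), giving case (3).

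The main obstacle is the first step: translating Hong's combinatorial smoothness criterion into the uniform flag-sandwich description, including the verification that smoothness really does collapse the full tower of intersection conditions to the two-sided bracket $F_a\subset E\subset F_b$. Once the correspondence between $(a,b)$ and Hong's parameters is in place, the geometric identifications in the three cases follow from direct rank computations on $\overline{\omega}$ and consideration of $F_b\cap F_a^{\perp}$.
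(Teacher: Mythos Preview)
Your proposal is correct and follows the same route as the paper: both derive the lemma from Hong's classification \cite{Ho15}. The paper's proof is simply the citation ``Proposition 3.1 and Proposition 4.7 of Hong \cite{Ho15}'', whereas you sketch what extracting the statement from those propositions actually entails---the reduction of the Schubert incidence conditions to a single flag-sandwich $F_a\subset E\subset F_b$ (this is essentially Hong's Proposition 3.1) and the case-by-case identification via the rank of the induced form on $F_b/F_a$ (Hong's Proposition 4.7). Your added restriction $k-a\geq 2$ in case~(2) is the correct way to disambiguate the overlap with case~(3), since for $k-a=1$ the odd symplectic Grassmannian degenerates to projective space.
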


\begin{proof}
Proposition 3.1 and Proposition 4.7 of Hong \cite{Ho15}.
\end{proof}

As we have seen in Example 2.2,
the odd symplectic Grassmannian $\Gr_{\omega}(k, 2\ell; F_a, F_{2\ell-a-1})$ is not homogeneous
but a smooth Schubert variety of the symplectic Grassmannian $\Gr_{\omega}(k, 2\ell)$.
To prove Theorem \ref{Main theorem} in the case that $X$ is the symplectic Grassmannian $\Gr_{\omega}(k, 2\ell)$,
it suffices to consider when $Z$ is an odd symplectic Grassmannian.
In the remaining of the section, we will prove Theorem \ref{Main theorem} in this case.

\begin{lemma} \label{second fundamental form}
Let $X$ be the symplectic Grassmannian $\Gr_{\omega}(k, 2\ell)$
with $1 < k < \ell$ and
$\mathcal A$ be the variety of minimal rational tangents of $X$ at a
point $[E]\in X$. The tangent space $T_{\beta}$ of
$\widetilde{\mathcal A}$ at $\beta \in \widetilde{\mathcal A}$ is
given by
\begin{eqnarray*}
T_{\beta} &=&\{ u \otimes q' + u' \otimes q + 2 u \circ u' : u' \in U, q' \in Q \}
\text{ if } \beta = u \otimes q + u^2, \\
T_{\beta} &=& \{ u \otimes q' + u' \otimes q + c
u^2 : u' \in U, q' \in Q, c \in \mathbb C \} \text{ if } \beta = u \otimes q.
\end{eqnarray*}
The second fundamental form $\sigma \colon T_{\beta} \times T_{\beta}
\longrightarrow (T_{[E]}X)/T_{\beta} $ of $\widetilde{\mathcal A} \subset T_{[E]} X$
at $\beta \in \widetilde{\mathcal A}$ is given as follows:
\begin{enumerate}
\item[{(I)}] for $\beta=u \otimes q + u^2$,
\begin{eqnarray*}
&&\sigma(u' \otimes q + 2 u \circ u',\,\, u \otimes q')= u' \otimes q' \\
&&\sigma(u' \otimes q + 2 u \circ u',\,\, u'' \otimes q + 2 u \circ u'') = 2 u' \circ u'' \\
&&\sigma(u \otimes q', \,\,u \otimes q'')=0 {\rm;}
\end{eqnarray*}
\item[{(II)}] for $\beta = u \otimes q$,
\begin{eqnarray*}
&&\sigma(u' \otimes q , u \otimes q')= u' \otimes q' \\
&&\sigma(u' \otimes q, u ^2) =  2u \circ u' \\
&&\sigma(u' \otimes q , u''\otimes q ) = 0 \\
&&\sigma(u \otimes q', u \otimes q'') =0 \\
&&\sigma(u \otimes q', u^2) = 0   \\
&&\sigma(u^2, u^2)  = 0,
\end{eqnarray*}
\end{enumerate}
where $u', u'' \in U$ and $q', q'' \in Q$.
\end{lemma}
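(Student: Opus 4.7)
The plan is to parametrize the cone $\widetilde{\mathcal A}$ by the surjection
$$\phi : U \times Q \times \mathbb C \longrightarrow (U \otimes Q) \oplus S^2 U, \qquad \phi(u, q, c) = u \otimes q + c u^2,$$
whose fibers are the one-parameter orbits $(u, q, c) \sim (\lambda u, \lambda^{-1} q, \lambda^{-2} c)$, and to compute both $T_\beta \widetilde{\mathcal A}$ and the second fundamental form by differentiating curves $\alpha(t) = u(t) \otimes q(t) + c(t) u(t)^2$ at $t = 0$. At $\beta = u \otimes q + u^2$ (Case I) the scaling lets me normalize $c(t) \equiv 1$ along every such curve, while at $\beta = u \otimes q$ (Case II) we keep $c(0) = 0$ with $c'(0)$ free. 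Under these normalizations the derivative $\alpha'(0)$ takes the form $u'(0) \otimes q + u \otimes q'(0) + 2 u \circ u'(0)$ in Case (I) and $u'(0) \otimes q + u \otimes q'(0) + c'(0) u^2$ in Case (II), which are precisely the two tangent-space descriptions asserted. The $u^2$ direction that is apparently missing in Case (I) is in fact hidden inside $2 u \circ u'$: taking $u'$ proportional to $u$ (compensated by a choice of $q'$) produces a multiple of $u^2$.

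For the second fundamental form I invoke the identity $\sigma_\beta(\alpha'(0), \rho(0)) = \rho'(0) \bmod T_\beta \widetilde{\mathcal A}$ stated before the lemma, and for each bilinear pair I exhibit a curve $\alpha$ realizing the first argument and a moving section $\rho(t) \in T_{\alpha(t)} \widetilde{\mathcal A}$ realizing the second. In Case (I), along $\alpha(t) = (u + tu') \otimes q + (u + tu')^2$, the sections $\rho(t) = (u + tu') \otimes q'$ and $\rho(t) = u'' \otimes q + 2 (u + tu') \circ u''$ yield $\rho'(0) = u' \otimes q'$ and $2 u' \circ u''$ respectively, while along $\alpha(t) = u \otimes (q + tq') + u^2$ the constant section $\rho(t) = u \otimes q''$ yields $\rho'(0) = 0$. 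In Case (II), with the three families of curves $\alpha(t) = (u + tu') \otimes q$, $\alpha(t) = u \otimes (q + tq')$, and $\alpha(t) = u \otimes q + t u^2$, the same recipe produces the two nontrivial values $\sigma(u' \otimes q, u \otimes q') = u' \otimes q'$ (from $\rho(t) = (u + tu') \otimes q'$) and $\sigma(u' \otimes q, u^2) = 2 u \circ u'$ (from $\rho(t) = (u + tu')^2$), while the remaining four pairings vanish because $\rho(t)$ can be chosen independent of $t$ within the correct tangent-space description.

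The main care point is ensuring that each chosen $\rho(t)$ genuinely lies in $T_{\alpha(t)} \widetilde{\mathcal A}$ for all small $t$, not merely at $t = 0$; this forces the moving sections to be built from the correct tangent-space formula (Case I or Case II) according to whether $c(t) \neq 0$ or $c(t) = 0$ along the chosen curve. Along a Case (II)-type curve the direction $u(t)^2$ is freely available in $T_{\alpha(t)}$, which is exactly what enables the nontrivial entry $\sigma(u' \otimes q, u^2) = 2 u \circ u'$; along a Case (I)-type curve the $u(t)^2$ direction is instead tied to the $2 u(t) \circ u'$ parametrization. Once the sections are set up correctly, every $\rho'(0)$ is a routine elementary differentiation, and the symmetry of $\sigma$ together with the dimension check $\dim T_\beta = k + 2(\ell - k) = \dim \widetilde{\mathcal A}$ (in both cases) confirm that the lemma's entries determine $\sigma$ completely.
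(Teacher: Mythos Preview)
Your proposal is correct and follows essentially the same approach as the paper: both compute $T_\beta\widetilde{\mathcal A}$ by differentiating curves of the form $u(t)\otimes q(t)+c(t)u(t)^2$ and then obtain each entry of $\sigma$ via the moving-section formula $\sigma(\alpha'(0),\rho(0))=\rho'(0)\bmod T_\beta$, using exactly the same curves $\alpha$ and sections $\rho$ (the paper writes $u_t$, $q_t$, $c_t$ where you write $u+tu'$, $q+tq'$, $t$). Your packaging via the surjection $\phi$ and the explanation of why the $u^2$ direction is present in Case~(I) (via $u'\in\mathbb C u$, $q'=-\lambda q$) are minor expository additions, not a different method.
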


\begin{remark} The second fundamental form $\sigma$ of
$\widetilde{\mathcal A}$ at $\beta \in \widetilde{\mathcal A}$ has
its image in the quotient space $(T_{[E]}X)/T_{\beta}$.  For simplicity, here
and henceforth we will use the same notation for an element $v \in T_{[E]} X$
and its image in the quotient $(T_{[E]}X)/T_{\beta}$.
We will use the same convention for the second fundamental forms of other subvarieties.
\end{remark}

\begin{proof}
This result is given in Lemma 3.2 of Hong-Park \cite{HP} without details.
We give the details of the proof.
First, to obtain the tangent space $T_{\beta} \widetilde{\mathcal A}$,
we consider the velocity vectors of curves in the affine cone
$\widetilde{\mathcal A}$.
Let $\{u_t\} \subset U$ be a curve with $u_0=u$ and $\{q_t\} \subset
Q$ be a curve with $q_0=q$.
The curves $u_t \otimes q +u_t^2$, $u \otimes q_t + u^2$ in the affine cone $\widetilde{\mathcal A}$
pass through a point $u \otimes q + u^2$
and their velocity vectors are
$u'\otimes q + 2 u \circ u'$ for some $u' \in U$
and $u \otimes q'$ for some $q' \in Q$, respectively.
Because $\dim \widetilde{\mathcal A}=k+2m=\dim U+\dim Q$,
the tangent space $T_{\beta} \widetilde{\mathcal A}$ at a point $\beta = u \otimes q + u^2$
is spanned by the vectors $\{u' \otimes q + 2 u \circ u' : u' \in U \}$ and $\{ u\otimes q' : q' \in Q \}$.
Similarly, the curves $u_t \otimes q$ and $u \otimes q_t$ pass through a point $u \otimes q$ when $t=0$
so that their velocity vectors $\{u' \otimes q : u' \in U \}$ and $\{ u\otimes q' : q' \in Q \}$ lie
in $T_{\beta} \widetilde{\mathcal A}$ at a point $\beta = u \otimes q$.
But these vectors do not span the whole tangent space $T_{\beta} \widetilde{\mathcal A}$
since $\{u' \otimes q : u' \in \mathbb C u \} = \{ u\otimes q' : q' \in \mathbb C q \}$.
Therefore, we additionally consider a curve $u \otimes q +c_t u^2$
such that $c_t \in \mathbb C$ and $c_0=0$,
from which we obtain the tangent vectors of the form $c u^2$
for some $c \in \mathbb C$.

The second fundamental form $\sigma\colon T_{\beta} \times T_{\beta} \rightarrow (T_{[E]}X)/T_{\beta}$
is given by the differential of the Gauss map
$\widetilde{\mathcal A} \rightarrow \Gr(d, T_{[E]}X), \,
\beta
\mapsto [T_{\beta}\widetilde{\mathcal A}]
,$
where $d=\dim \widetilde{\mathcal A}$, as explained in Section 2.2.

Let $\{u_t\} \subset U$ be a curve with $u_0=u$ and $\{q_t\} \subset
Q$ be a curve with $q_0=q$.
Then the holomorphic curves $[T_{\beta_t}]$ in $\Gr(d,T_{[E]}X)$
for $\{ \beta_t \} \subset \widetilde{\mathcal A}$ such that
$\beta_0 = \beta$ are as follows:
\begin{enumerate}\setlength{\baselineskip}{0.85 \baselineskip}
\item[{\rm (1)}] for $\beta_t=u_t \otimes q +u_t^2$,
$T_{\beta_t}=\{u_t \otimes q' + u' \otimes q + 2 u_t \circ u' : u' \in U, q' \in Q\}$;

\item[{\rm (2)}] for $\beta_t=u \otimes q_t + u^2$,
$T_{\beta_t} = \{ u \otimes q' + u' \otimes q_t + 2 u \circ u' : u' \in U, q' \in Q\}$;

\item[{\rm (3)}] for $\beta_t=u_t \otimes q$, $T_{\beta_t}=\{u_t \otimes q' + u' \otimes q + c u_t^2
 : u' \in U, q' \in Q, c \in \mathbb C \}$;

\item[{\rm (4)}] for $\beta_t=u \otimes q_t$, $T_{\beta_t} =\{ u \otimes q' + u' \otimes q_t + c u^2
 : u' \in U, q' \in Q, c \in \mathbb C \}$;

\item[{\rm (5)}] for $\beta_t = u \otimes q +c_t u^2$, $T_{\beta_t}
=\{u \otimes q' + u' \otimes q + c_t u \circ u' + c u^2
 : u' \in U, q' \in Q, c \in \mathbb C \}$,
where $\{c_t\} \subset \mathbb C$ is a curve with $c_0=0$.
\end{enumerate}
By differentiating the curve $[T_{\beta_t}]$
in $\Gr(d, T_{[E]} X)$, we can compute the second fundamental form $\sigma$
of $\widetilde{\mathcal A}$. 
To be specific, for any tangent vectors $\xi, \zeta \in T_{\beta}
\widetilde{\mathcal A}$ we choose
\begin{itemize}\setlength{\baselineskip}{0.85 \baselineskip}
  \item a holomorphic curve $\beta_t$ into $\widetilde{\mathcal A}$
  such that $\beta_0 = \beta$ and $\frac{d}{dt}|_{t=0} \beta_t = \xi$,
  which gives the curve $[T_{\beta_t}]$ in $\Gr(d, T_{[E]} X)$,
  \item a vector field $\rho_t$ 
along the above curve $\beta_t$
  such that $\rho_0=\zeta$ and $\rho_t \in T_{\beta_t}$ for every $t$.
\end{itemize}
Then we have $\sigma(\xi, \zeta) =\sigma(\frac{d}{dt}|_{t=0}\beta_t,
\rho_0)=\frac{d}{dt}|_{t=0}\rho_t$.\\

\noindent {\rm (}{\it Case I : $\beta=u \otimes q + u^2$}{\rm )}. (i) First, to
compute $\sigma(u' \otimes q + 2u \circ u', u \otimes q')$, take a curve
$\beta_t=u_t \otimes q +u_t^2$ as in (1) and assume that $u_0=u$,
$\frac{d}{dt}|_{t=0}u_t=u'$. Then $\beta_0=u \otimes q+u^2=\beta$
and $\frac{d}{dt}|_{t=0} \beta_t=u' \otimes q + 2 u \circ u'$. Since
$u_t \otimes q' \in T_{\beta_t}$ for any $t$, the differential
$\frac{d}{dt}|_{t=0} [T_{\beta_{t}}] \colon T_{\beta} \,\, \rightarrow
\,\, T_{[E]} X/T_{\beta}$ maps $u \otimes q' \in T_{\beta}$ to
$\frac{d}{dt}|_{t=0} u_t \otimes q' =u' \otimes q'$. Thus we have
$\sigma(u' \otimes q + 2u \circ u', u \otimes q')=u' \otimes q'$.\\
(ii) Taking the same curve $\beta_t=u_t \otimes q +u_t^2$ as in (i),
$u'' \otimes q + 2 u_t \circ u'' \in T_{\beta_t}$ for any $t$. So
$\sigma(u' \otimes q + 2 u \circ u',\,\, u'' \otimes q + 2 u \circ
u'') = \frac{d}{dt}|_{t=0}(u'' \otimes q + 2 u_t \circ u'') = 2 u'
\circ u''$.\\
(iii) For $\beta_t=u \otimes q_t + u^2$ with
$\frac{d}{dt}|_{t=0}q_t=q'$ as in (2), $u \otimes q'' \in T_{\beta_t}$
for any $t$.
So $\sigma(u \otimes q', \,\,u \otimes q'')=\frac{d}{dt}|_{t=0} u \otimes q'' =0$. \vspace{1.7mm}

\noindent  {\rm (}{\it Case II : $\beta=u \otimes q $}{\rm )}. (i) Similarly, we
take $\beta_t=u_t \otimes q$ as in (3) and assume that
$u_0=u, \frac{d}{dt}|_{t=0}u_t=u'$. Then $\beta_0=u \otimes q=\beta$,
$\frac{d}{dt}|_{t=0} \beta_t=u' \otimes q$ and $u_t \otimes q' \in
T_{\beta_t}$ for any $t$, hence $\sigma(u' \otimes q , u \otimes q')
= \frac{d}{dt}|_{t=0} u_t \otimes q' = u' \otimes q'$.\\
(ii) For the above curve $\beta_t=u_t \otimes q$, $u_t^2 \in
T_{\beta_t}$ for any $t$. So $\sigma(u' \otimes q, u ^2) =
\frac{d}{dt}|_{t=0} u_t^2 = 2u \circ u'$.\\
(iii) For the above curve $\beta_t=u_t \otimes q$, $u'' \otimes q
\in T_{\beta_t}$ for any $t$.
So $\sigma(u' \otimes q , u''\otimes q) = \frac{d}{dt}|_{t=0} u'' \otimes q = 0$.\\
(iv) Taking $\beta_t=u \otimes q_t$ as in (4), $\beta_0=u \otimes
q=\beta$ and $\frac{d}{dt}|_{t=0} \beta_t = u \otimes q'$. Since
$u\otimes q'' \in T_{\beta_t}$ for any $t$, we obtain
$\sigma(u \otimes q', u \otimes q'')=\frac{d}{dt}|_{t=0} u \otimes q'' =0$.\\
(v) For the above curve $\beta_t=u \otimes q_t$, $u^2 \in
T_{\beta_t}$ for any $t$.
So $\sigma(u \otimes q', u^2) = \frac{d}{dt}|_{t=0} u^2= 0$.\\
(vi) Finally, we take $\beta_t=u \otimes q + c_t u^2$ as in (5) and
assume that $\frac{d}{dt}|_{t=0}c_t=1$.
Then $\beta_0=u \otimes q=\beta$ and $\frac{d}{dt}|_{t=0} \beta_t =
u^2$. Since $u^2 \in T_{\beta_t}$ for any $t$, we obtain
$\sigma(u^2, u^2) = \frac{d}{dt}|_{t=0} u^2 = 0$.
\end{proof}

\begin{definition}
Let $X$ be a polarized uniruled projective manifold equipped with
a minimal rational component $\mathcal K$.
Assume that $Z$ is an embedded submanifold in $X$.
Let $\mathcal A:=\mathcal C_x(X) \subset \mathbb P(T_xX)$ and
$\mathcal B:=\mathcal C_x(Z) \subset \mathbb P(T_xZ) \subset \mathbb P(T_xX)$ be
the varieties of minimal rational tangents at a common general point $x$ of $X$ and $Z$, respectively.
We say that the pair $(\mathcal A, \mathcal B)$ is \emph{nondegenerate} if
  $$\Ker \sigma_{\beta}( \,\cdot \, , T_{ \beta} \widetilde{ \mathcal B} ) = \mathbb C \beta $$
  for any $\beta \in \widetilde{\mathcal B}$,
  where $\sigma_{\beta}\colon
  T_{\beta} \widetilde{\mathcal A} \times T_{\beta} \widetilde{\mathcal A }
  \rightarrow (T_xX)/T_{\beta} \widetilde{\mathcal A}$
  is the second fundamental form of the affine cone $\widetilde{\mathcal A}$ in $T_xX$ at $\beta$.
\end{definition}

\begin{proposition} \label{nondegenerate: symplectic}
Let $X$ be the symplectic Grassmannian $\Gr_{\omega}(k, 2\ell)$ with $1 < k < \ell$ and
$Z$ be an odd symplectic Grassmannian $\Gr_{\omega}(k, 2\ell; F_a, F_{2\ell-a-1})$ with $0 \leq a < k$.
Let $\mathcal A \subset \mathbb P(T_xX)$ and
$\mathcal B \subset \mathbb P(T_xZ) \subset \mathbb P(T_xX)$ be
the varieties of minimal rational tangents at a common general point $x$ of $X$ and $Z$, respectively.
Then the pair $(\mathcal A, \mathcal B)$ is nondegenerate.
\end{proposition}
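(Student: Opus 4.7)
My plan is to compute $\Ker\sigma_\beta(\,\cdot\,,T_\beta\widetilde{\mathcal{B}})$ directly from the explicit formulas in Lemma~\ref{second fundamental form}, after first identifying $\widetilde{\mathcal{B}}$ as a subcone of $\widetilde{\mathcal{A}}$. The nonlinearity of $Z$ enters as the inequality $\dim U'\geq 2$, and that is exactly what makes the symmetric-square piece of $\sigma_\beta$ effective in constraining $\zeta$.

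For the first step, set $U':=\{u\in U=E^*:u|_{F_a}=0\}$ and $Q':=(E^\perp\cap F_{2\ell-a-1})/E\subset Q$; these have dimensions $k-a\geq 2$ and $2\ell-2k-a-1$ respectively, the bound on $\dim U'$ being the hypothesis that $Z$ is nonlinear. A line of $X$ through $[E]$, parametrized by data $(E',F)$ with $E'\subset F\subset E'^\perp$, $\dim E'=k-1$, $\dim F=k+1$, lies in $Z$ if and only if $F_a\subset E'$ and $F\subset F_{2\ell-a-1}$; tracing this condition separately for $F$ isotropic (giving $c=0$ tangents) and for $F$ non-isotropic (giving $c\neq 0$ tangents $u\otimes q+cu^2$) yields
\[
\widetilde{\mathcal{B}} \;=\; \widetilde{\mathcal{A}}\cap T_{[E]}Z \;=\; \{\,u\otimes q+cu^2:u\in U',\,q\in Q',\,c\in\mathbb{C}\,\}\setminus\{0\},
\]
and, mimicking the proof of Lemma~\ref{second fundamental form}, one obtains $T_\beta\widetilde{\mathcal{B}}=\{u\otimes q'+u'\otimes q+2u\circ u':u'\in U',\,q'\in Q'\}$ at a general smooth point $\beta=u\otimes q+u^2\in\widetilde{\mathcal{B}}$.

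Next, for $\zeta=u\otimes q'_0+u'_0\otimes q+2u\circ u'_0\in T_\beta\widetilde{\mathcal{A}}$ and $\xi=u\otimes q'_1+u'_1\otimes q+2u\circ u'_1\in T_\beta\widetilde{\mathcal{B}}$, bilinearity and Case~(I) of Lemma~\ref{second fundamental form} give
\[
\sigma_\beta(\zeta,\xi)\;=\;u'_1\otimes q'_0+u'_0\otimes q'_1+2\,u'_0\circ u'_1\pmod{T_\beta\widetilde{\mathcal{A}}}.
\]
Vanishing modulo $T_\beta\widetilde{\mathcal{A}}$ is equivalent to the existence of $\tilde u'\in U$ and $\tilde q'\in Q$ with $u'_1\otimes q'_0+u'_0\otimes q'_1=\tilde u'\otimes q+u\otimes\tilde q'$ and $u'_0\circ u'_1=u\circ\tilde u'$. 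The second equation, a rank-two identity in $S^2U$, admits a solution $\tilde u'$ only when $u\in\mathbb{C}u'_0$ or $u\in\mathbb{C}u'_1$; since $\dim U'\geq 2$, we may choose $u'_1\in U'\setminus\mathbb{C}u$, which forces $u'_0=\lambda u$ for some $\lambda\in\mathbb{C}$. Substituting into the first equation reduces it to $u'_1\otimes(q'_0-\lambda q)\in u\otimes Q$, and (again using $u'_1\notin\mathbb{C}u$) this forces $q'_0=\lambda q$. Assembling, $\zeta=2\lambda(u\otimes q+u^2)=2\lambda\beta\in\mathbb{C}\beta$, as required.

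The verification at smooth points $\beta=u\otimes q\in\widetilde{\mathcal{B}}$ of the $c=0$ stratum is entirely analogous via Case~(II) of Lemma~\ref{second fundamental form}: here $T_\beta\widetilde{\mathcal{B}}$ carries the extra direction $c_1u^2$ with $c_1\in\mathbb{C}$ free, and $T_\beta\widetilde{\mathcal{A}}\cap S^2U=\mathbb{C}u^2$, so the dependence of $\sigma_\beta(\zeta,\xi)$ on the free parameter $c_1$ forces $u'_0\in\mathbb{C}u$ immediately; nonlinearity then forces $c_0=0$, and the residual $U\otimes Q$-equation gives $q'_0\in\mathbb{C}q$, yielding $\zeta\in\mathbb{C}\beta$. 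The main technical burden is the bookkeeping of what is absorbed by $T_\beta\widetilde{\mathcal{A}}$ when projecting $\sigma_\beta(\zeta,\xi)$, together with the initial identification of $\widetilde{\mathcal{B}}$; neither is a conceptual obstacle beyond careful case analysis.
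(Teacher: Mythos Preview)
Your argument is correct and follows the same route as the paper: identify $\widetilde{\mathcal B}$ as the subcone parametrized by subspaces $U_a\subset U$ and $Q_a\subset Q$ (your $U',Q'$), then read off $\Ker\sigma_\beta(\,\cdot\,,T_\beta\widetilde{\mathcal B})$ from Lemma~\ref{second fundamental form}. The paper organises the computation by taking the kernel against the pieces $u\otimes Q_a$, $\{u'\otimes q+2u\circ u':u'\in U_a\}$ and $\mathbb C u^2$ separately and intersecting, whereas you expand $\sigma_\beta(\zeta,\xi)$ in full and argue with a single well-chosen $\xi$; the factorisation step in $S^2U$ (your ``rank-two identity'') is exactly what drives the paper's claim that the kernel against the $U_a$-piece is already $\mathbb C\beta$.

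One inconsequential slip: for $[E]$ general in $Z$ (equivalently $E\cap F_{a+1}=F_a$, which is the genericity the paper invokes) the inclusion $E\subset F_{2\ell-a-1}$ forces $E^\perp\supset F_{2\ell-a-1}^\perp=F_{a+1}$, so $\dim(E^\perp\cap F_{2\ell-a-1})=2\ell-k-1$ and $\dim Q'=2\ell-2k-1$, a hyperplane in $Q$ independent of $a$. Your figure $2\ell-2k-a-1$ comes from treating $E^\perp$ and $F_{2\ell-a-1}$ as generic subspaces of $V$. Since your nondegeneracy argument only uses $\dim U'\ge 2$ (the nonlinearity of $Z$), this does not affect the proof.
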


\begin{proof}
Now take a point $[E] \in Z$ such that $E \cap F_{a+1} = F_a$.
Then the dimension of $F_{2\ell-a-1} \cap E^{\perp}$ is $2\ell-k-1$
and so $F_{2\ell-a-1}/(F_{2\ell-a-1} \cap E^{\perp})$  is isomorphic to $(E/F_a)^*$
From Lemma 4.2 (2) of Hong-Mok \cite{HoM2},
the variety $\mathcal B$ of minimal rational tangent of $Z$ at a general point $[E] \in Z$ is
the projectivization of the affine cone
$$\widetilde{ \mathcal B}=\{ u \otimes q + c u^2 : u \in U_a, q \in Q_a, c \in \mathbb C \}\backslash \{0\}
\subset (U_a \otimes Q_a) \oplus S^2U_a,$$ where $U_a =(E/F_a)^*$ and $Q_a = (F_{2\ell-a-1} \cap E^{\perp})/E$.
Note that $Q_a$ is a codimension 1 subspace of $Q$.
By this description of the varieties of minimal rational tangents of $Z$ and
by the computation of the second fundamental form of $\mathcal A$ (Lemma \ref{second fundamental form}),
we get the desired results.\\
(1) The tangent space $T_{\beta} \widetilde{\mathcal B}$ at $\beta = u \otimes q$ is given by
$\{ u \otimes q' + u' \otimes q + c
u^2 : u' \in U_a, q' \in Q_a, c \in \mathbb C \}$.
Then we have $\Ker \sigma_{\beta}( \,\cdot \, , U_a \otimes q) = U \otimes q$,
$\Ker \sigma_{\beta}( \,\cdot \, , u \otimes Q_a) = \{ u \otimes q' + c u^2 : q' \in Q, c \in \mathbb C \}
$ and
$\Ker \sigma_{\beta}(\,\cdot \, , \mathbb C u^2) = \{ u \otimes q' + c u^2 : q' \in Q, c \in \mathbb C \}
$.
Therefore, $\Ker \sigma_{\beta}( \,\cdot \, ,
T_{ \beta} \widetilde{\mathcal B} ) = \mathbb C(u\otimes q) = \mathbb C \beta$.
\\
(2) The tangent space $T_{\beta} \widetilde{\mathcal B}$ at $\beta = u \otimes q + u^2$ is given by
$\{ u \otimes q' + u' \otimes q + 2 u \circ u' : u' \in U_a, q' \in Q_a \}$.
Then we have $\Ker \sigma_{\beta}( \,\cdot \, , u \otimes Q_a) = \{ u \otimes q' + c u^2 : q' \in Q, c \in \mathbb C \}
$
and $\Ker \sigma_{\beta}( \,\cdot \, , \{u' \otimes q + 2 u \circ u' : u' \in U_a \}) = \mathbb C (u \otimes q + u^2)$.
Therefore, $\Ker \sigma_{\beta}( \,\cdot \, , T_{ \beta} \widetilde{\mathcal B} )
= \mathbb C(u \otimes q + u^2) = \mathbb C \beta$.
\end{proof}

We will use the same notation for $g\in G$ and for the differential of the
action $g\colon X \to X$ at $x\in X$ and its projectivization
$\mathbb P (T_x X) \to \mathbb P (T_{gx} X)$, for simplicity.

\begin{proposition}
\label{projective equivalence:Symplectic Grassmannian}
In the setting of Proposition \ref{nondegenerate: symplectic},
if $\mathcal B'=\mathcal A \cap \mathbb P(W')$ is another linear section of $\mathcal A$
by a linear subspace $\mathbb P(W')$ of $\mathbb P(T_x X)$ such that
$(\mathcal B \subset \mathbb P(T_x Z))$ is projectively equivalent to $(\mathcal B' \subset \mathbb P(W'))$,
then there is an element $h$ in 
a Levi factor of the parabolic subgroup $P$ such that $\mathcal B' = h \mathcal B$.
\end{proposition}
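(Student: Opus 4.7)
The plan is to pin down the subspace $W' \subset T_{[E]}X = (U \otimes Q) \oplus S^2 U$ up to the action of a Levi factor $L = \GL(U) \times \Sp(Q)$ of $P$, by showing that $W'$ must have the form $(U'_a \otimes Q'_a) \oplus S^2 U'_a$ for some subspace $U'_a \subset U$ of dimension $k-a$ and some hyperplane $Q'_a \subset Q$, and then invoking transitivity of $L$ on such pairs.

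First I would exploit the projection $\pi\colon \mathcal{A} \to \mathbb{P}(U)$, $[u \otimes q + cu^2] \mapsto [u]$, whose fibers $\pi^{-1}([u]) = \mathbb{P}(u \otimes Q \oplus \mathbb{C} u^2) \cong \mathbb{P}^{2m}$ form an intrinsic family of linear subspaces of $\mathcal{A}$. The explicit description of $\widetilde{\mathcal{B}}$ exhibits $\mathcal{B}$ as a subbundle over the linear subspace $\mathbb{P}(U_a)$ whose fibers are hyperplanes inside the corresponding fibers of $\pi$. The projective equivalence $(\mathcal{B}, \mathbb{P}(T_{[E]}Z)) \cong (\mathcal{B}', \mathbb{P}(W'))$ transports this fibered structure to $\mathcal{B}'$; dimension counts involving maximal linear subspaces of $\mathcal{A}$ through a general point of $\mathcal{B}'$ then force $\mathcal{B}'$ itself to fiber over a linear subspace $\mathbb{P}(U'_a) \subset \mathbb{P}(U)$ of dimension $k-a-1$, with each fiber a hyperplane inside the corresponding $\pi$-fiber.

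Next I would translate this fibered data into the shape of $W'$. Writing $H_{u'} := W' \cap (u' \otimes Q \oplus \mathbb{C} u'^2)$ for $[u'] \in \mathbb{P}(U'_a)$, the presence of Veronese-type points $[u'^2] \in \widetilde{\mathcal{B}}'$ (arising from the locus $q = 0$ inside $\widetilde{\mathcal{B}}$ under the projective equivalence) forces $S^2 U'_a \subset W'$. A linear-algebraic argument then shows that the hyperplanes $H_{u'}$ are cut out by a single linear form $\varphi$ on $Q$, independent of $u'$: namely $H_{u'} = (u' \otimes \ker\varphi) \oplus \mathbb{C} u'^2$. Setting $Q'_a := \ker\varphi$ and combining with $S^2 U'_a \subset W'$ gives $W' = (U'_a \otimes Q'_a) \oplus S^2 U'_a$, as desired.

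The proof then concludes by transitivity of $L$ on the pair $(U'_a, Q'_a)$: $\GL(U)$ acts transitively on $(k-a)$-dimensional subspaces of $U$, giving $h_1 \in \GL(U)$ with $h_1 U_a = U'_a$, and $\Sp(Q)$ acts transitively on hyperplanes of $Q$ (equivalently, on lines in $Q$ via the symplectic pairing), giving $h_2 \in \Sp(Q)$ with $h_2 Q_a = Q'_a$; the element $h = (h_1, h_2) \in L$ then satisfies $\mathcal{B}' = h \mathcal{B}$. The main obstacle I expect is the second step: ruling out \emph{twisted} linear sections in which $\varphi$ varies with $u'$, and verifying that the hyperplane family $\{H_{u'}\}$ is governed by a single linear form on $Q$. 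This requires using the linearity of the ambient $W'$ very carefully in concert with the explicit shape of $\widetilde{\mathcal{A}}$, and also handles the interaction between the two summands $U \otimes Q$ and $S^2 U$ of $T_{[E]}X$.
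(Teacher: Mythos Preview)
Your overall strategy is reasonable, but it diverges from the paper's and contains a genuine gap in the second step.

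The paper does not argue via maximal linear subspaces of $\mathcal A$. Instead it singles out the codimension-one Segre slice $\mathcal B_1:=\mathcal B\cap\mathbb P(U\otimes Q)\cong\mathbb P(U_a)\times\mathbb P(Q_a)$ and uses the \emph{base locus of the second fundamental form} of $\widetilde{\mathcal A}$ (already computed in Lemma~\ref{second fundamental form}) as the projective invariant that distinguishes points of $\mathcal A\cap\mathbb P(U\otimes Q)$ from generic points of $\mathcal A$: at a point $u\otimes q$ the base locus is a union of two linear spaces, whereas at $u\otimes q+u^2$ it is a single linear space. This forces the image $\mathcal B'_1$ of $\mathcal B_1$ to land inside $\mathbb P(U)\times\mathbb P(Q)$, after which a lemma of Mok on linear sections of Segre varieties yields $\mathcal B'_1=\mathbb P(U'_a)\times\mathbb P(Q'_a)$ with $Q'_a\subset Q$ a hyperplane. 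The same base-locus argument then shows the remaining $\mathbb P^{2m-1}$'s of $\mathcal B'$ are $\pi$-vertical, and the conclusion follows.

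Your first step (forcing $\mathcal B'$ to be $\pi$-vertical via maximal linear subspaces) can be made to work, since one checks directly that any positive-dimensional linear subspace of $\mathcal A$ meeting $\mathcal A\setminus\mathbb P(U\otimes Q)$ lies in a single $\pi$-fiber; but you would still need to rule out that some of the $\mathbb P^{2m-1}$'s of $\mathcal B'$ fall entirely into the Segre locus, and this is essentially the same dichotomy the paper handles with the second fundamental form.

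The real gap is your second step. The claim that ``Veronese-type points $[u'^2]$'' of $\widetilde{\mathcal B}$ are carried to points of the same type in $\widetilde{\mathcal A}$ is unjustified: the projective equivalence $(\mathcal B,\mathbb P(T_xZ))\cong(\mathcal B',\mathbb P(W'))$ is an abstract linear isomorphism and has no reason to respect the decomposition $T_xX=(U\otimes Q)\oplus S^2U$. So you cannot conclude $S^2U'_a\subset W'$ this way. Relatedly, your ``single $\varphi$'' problem---which you rightly flag as the main obstacle---is exactly what the paper bypasses by first identifying $\mathcal B'_1=\mathbb P(U'_a)\times\mathbb P(Q'_a)$ via Mok's lemma: once $Q'_a$ is pinned down on the Segre slice, each $\pi$-fiber of $\mathcal B'$ already contains the hyperplane $\mathbb P(u'\otimes Q'_a)$ of codimension one in $\mathbb P(u'\otimes Q)$, and the extra direction is forced. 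Without an analogue of this step, your linear-algebraic argument for a uniform $\varphi$ is incomplete.
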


\begin{proof}
Since $\mathcal B$ is a
$\mathbb P^{2m-1}$-bundle on $\mathbb P(U_a)$,
$\mathcal B'$ is also a $\mathbb P^{2m-1}$-bundle on
$\mathbb P^r$, where $r=\dim U_a-1=k-a-1$.
Let $\mathcal B_1'$ be 
a codimension 1 linear section of $\mathcal B'$ which is projectively equivalent to 
the codimension 1 linear section $\mathcal B_1:=\mathcal B \cap \mathbb P(U
\otimes Q)\simeq \mathbb P(U_a) \times \mathbb P(Q_a)$ of $\mathcal B$.

Suppose that $\mathcal B_1'$ is not contained in $ \mathbb P(U \otimes Q) $.
Take $b'=u \otimes q +u^2 \in \widetilde{\mathcal B}_1' \cap
(\widetilde{\mathcal A} \backslash  (U \otimes Q))$. Since $\mathcal
B'$ is a linear section $\mathcal A \cap \mathbb P(W')$ of $\mathcal
A$, the tangent space $T_{b'} \widetilde{\mathcal B}'$ at $b'$ is
contained in the intersection $W'\cap T_{b'}\widetilde{\mathcal A}$
and the second fundamental form $\sigma^{\mathcal
B'}_{b'}:T_{b'}\widetilde{\mathcal B}'\times
T_{b'}\widetilde{\mathcal B}' \rightarrow
W'/T_{b'}\widetilde{\mathcal B'}$ of $\widetilde{\mathcal B}'$ at
$b'$, composed with the quotient map $  W'/T_{b'}
\widetilde{\mathcal B}' \rightarrow T_xX/T_{b'}\widetilde{\mathcal A}$,
is  the restriction of the second fundamental form $\sigma_{b'}$ of
$\widetilde{\mathcal A}$ to $T_{b'}\widetilde{\mathcal B}' \times
T_{b'}\widetilde{\mathcal B}'$. 
Hence $\{ v \in T_{b'}\widetilde{\mathcal B}' : \sigma^{\mathcal B'}_{b'}(v,v)=0 \}
$ is a linear subspace of $\{u \otimes q' : q' \in Q\}$ because
$\mathcal B'$ is a linear section of $\mathcal A$.
Since $Z$ is not linear, for $b \in \widetilde{\mathcal B}_1$, $\{v
\in T_b\widetilde{\mathcal B} : \sigma^{\mathcal B}_b(v,v)=0 \}$ is
the union of two subspaces
$\{u' \otimes q : u' \in U_a \}$ and $\{u \otimes q' : q' \in Q_a \}$,
while for $b' \in \widetilde{\mathcal B}_1' \cap (\widetilde{\mathcal A} \backslash   (U \otimes Q))$,
$\{ v \in T_{b'}\widetilde{\mathcal B}' : \sigma^{\mathcal B'}_{b'}(v,v)=0 \}$
is only one linear subspace.
Thus the second fundamental form  $\sigma^{\mathcal B}_{b}$ is not isomorphic to
$\sigma^{\mathcal B'}_{b'}$ and
hence $\mathcal B \subset \mathbb P(T_xZ)$
cannot be projectively equivalent to $\mathcal B' \subset \mathbb P(W')$.
Therefore, $\mathcal B_1'$ is contained in
$\mathbb P(U \otimes Q) \cap \mathcal A \simeq \mathbb P(U) \times \mathbb P(Q)$.
By Lemma 2 of Mok \cite{M08} about linear maps between nontrivial tensor product spaces,
any linear section of $\mathbb P(U) \times \mathbb P(Q)$ which is
projectively equivalent to $\mathbb P(U_a) \times \mathbb
P(Q_a) \subset \mathbb P(T_x Z)$ is of the form $\mathbb P(U_a') \times
\mathbb P(Q_a')$ for some subspaces $U_a' \subset U$ and $Q_a' \subset Q$
with $\dim U_a'=\dim U_a$, $\dim Q_a'=\dim Q_a$.

To characterize the variety $\mathcal B$ of
minimal rational tangents of $Z$,
we use the \emph{base locus} $\{v \in T_{\beta}\widetilde{\mathcal A} :
\sigma_{\beta}(v,v)=0 \}$ of the second fundamental form $\sigma$ of
$\widetilde{\mathcal A} \subset T_x X$ at a generic point $\beta \in
\widetilde{\mathcal A}$.
Let $\mathcal B' = \mathcal A \cap \mathbb P(W')$ be a linear
section of $\mathcal A$ which is projectively equivalent to
$\mathcal B \subset \mathbb P(T_x Z)$. Then for a general point $\beta'$
of $\widetilde{\mathcal B}'$ the second fundamental form
$\sigma_{\beta'}^{\mathcal B'}$ of $\widetilde{\mathcal B'}$ at
$\beta'$ is isomorphic to the second fundamental form
$\sigma_{\beta}^{\mathcal B}$ of $\widetilde{\mathcal B}$ at
$\beta $. Hence $\{ v \in T_{\beta'}\widetilde{\mathcal B'} :
\sigma_{\beta'}^{\mathcal B'}(v,v)=0 \}$ is isomorphic to $\{ v \in
T_{\beta }\widetilde{\mathcal B } : \sigma_{\beta }^{\mathcal B
}(v,v)=0 \}$. From the fact that $\mathcal B'$ is a linear section
of $\mathcal A$, it follows that $\{ v \in
T_{\beta'}\widetilde{\mathcal B'} : \sigma_{\beta'}^{\mathcal
B'}(v,v)=0 \}$ is contained in $\{ v \in T_{\beta'
}\widetilde{\mathcal A} : \sigma_{\beta' }(v,v)=0 \}$.
For $b \in \mathcal B' \backslash \mathcal B_1' \subset
\mathcal A \backslash \mathbb P(U \otimes Q)$ the linear space
$\mathbb P^{2m-1}$ in $\mathcal B'$ passing through $b$ is contained in the fiber
of the projection  $\mathcal A \rightarrow \mathbb P(U)$ containing
$b$, because $\{ v \in T_b \widetilde{\mathcal A}: \sigma_b(v,v)=0\}$
is the tangent space to the fiber of $\widetilde{\mathcal A} \rightarrow U$.
Thus $\mathcal B'$ is the restriction of the $\mathbb P^{2m-1}$-bundle on $\mathbb P(U)$
to the subspace $\mathbb P(U_a')$.
Because any hyperplane in $\mathbb P(Q)$ can be transformed another hyperplane in $\mathbb P(Q)$ under the action of $\Sp(Q)$,
$\mathcal B'=h \mathcal B$ for some $h \in \SL(U) \times
\Sp(Q)$ which is the semisimple part of $P$.
\end{proof}

\noindent {\it Proof of Theorem \ref{Main theorem}
in the case that $X$ is the symplectic Grassmannian $\Gr_{\omega}(k, 2\ell)$}.
From Lemma \ref{smooth Schubert variety: symplectic} and Theorem \ref{homogeneous submanifolds},
it suffices to consider odd symplectic Grassmannians 
in the symplectic Grassmannian $\Gr_{\omega}(k, 2\ell)$ with $1 <k< \ell$.
Let $Z$ be an odd symplectic Grassmannian $\Gr_{\omega}(k, 2\ell; F_a, F_{2\ell-a-1})$ with $0 \leq a < k$.

Let $f\colon U \rightarrow X$ be a holomorphic embedding
from a connected open subset $U$ of $Z$ into $X$
which respects varieties of minimal rational tangents for a general point $z\in U$.
Then $df(\mathcal C_z(Z))$ is the linear section
$\mathcal C_{f(z)}(X) \cap df(\mathbb P(T_zZ))$ of $\mathcal C_{f(z)}(X)$ and
$df(\mathcal C_z(Z)) \subset df(\mathbb P(T_zZ))$ is projectively equivalent to
$\mathcal C_z(Z) \subset \mathbb P(T_zZ)$.
By Proposition \ref{projective equivalence:Symplectic Grassmannian},
for each general point $z \in U$ there is an element $h=h(z)$ in a Levi factor of the parabolic subgroup $P$
such that $df(\mathcal C_z(Z))= \mathcal C_{f(z)}(hZ)$.
Thus $f$ is nondegenerate with respect to $(\mathcal K, \mathcal H)$ by Proposition \ref{nondegenerate: symplectic}.
Then Proposition 2.1 of Hong-Mok \cite{HoM1} implies that
$f$ sends minimal rational curves in $Z$ to minimal rational curves in $X$
and we get a rational extension $F\colon Z\rightarrow X$ of $f$
by Proposition \ref{Non-equidimensional Cartan-Fubini type extension theorem}.
Then the total transformation $F(Z)$ of $F$ is \emph{rationally saturated},
i.e., for every smooth point $x \in F(Z)$ and for any minimal rational curve $C$ on $X$ passing through $x$,
$C$ must lie on $F(Z)$ whenever $C$ is tangent to $F(Z)$ at $x$.
For a general point $x$ in $F(Z)$,
the variety $\mathcal C_x(F(Z))$ of minimal rational tangents of $F(Z)$ is
$dF(\mathcal C_z(Z))$ where $x=F(z)$.

Fix a general point $x_0\in U$.
From the homogeneity of $X$ and Proposition \ref{projective equivalence:Symplectic Grassmannian},
$F(x_0)=gx_0$ for some $g \in G$ and $\mathcal C_{gx_0}(F(Z)) =hg\mathcal C_{x_0}(Z)$ for an element $h \in P$.
Then $F(\Sigma) =\Sigma$ up to the action of $G$, where $\Sigma$ denotes
the subvariety of $Z$ swept out by minimal rational curves in $Z$
passing through $x_0$. Let $C$ be a standard minimal rational curve in $Z$ passing through $x_0$ and
let $y\in C$ be a smooth point different from $x_0$.
Then the tangent direction $[T_y C]$ is
contained both in $\mathcal C_y(Z)$ and in $\mathcal C_y(F(Z))$.
By the deformation theory of minimal rational curves (Lemma 2.8 of Hong-Mok \cite{HoM1}),
the tangent space $T_y \Sigma$ of $\Sigma$ at $x$ can be
identified with the tangent space of $\widetilde{\mathcal C}_y(Z)$
at $\alpha \in T_x C$.
Note that by Proposition 4.3 of Hong-Mok \cite{HoM2},
if $h$ is an element in the isotropy subgroup $P_{[E]}$ of $G$
such that $h \mathcal B$ and $\mathcal B$ are tangent at a point of intersection,
then $h\mathcal B$ is equal to $\mathcal B$.
Since $F(\Sigma) =\Sigma$, $\mathcal C_y(Z)$
is tangent to $\mathcal C_y(F(Z))$ at $[T_y C] $, and thus we have
$\mathcal C_y(F(Z))=\mathcal C_y(Z)$.
Therefore, $\mathcal C_y(F(Z))=\mathcal C_y(Z)$ for a generic point $y \in \Sigma$.

Since $Z$ is a uniruled projective manifold of Picard number 1,
there is a sequence of irreducible varieties $\mathcal U_0=\{ x_0\}
\subset \mathcal U_1 \subset \cdot\cdot\cdot \subset \mathcal U_k$
with ${\rm dim}\,\mathcal U_k= {\rm dim}\, Z$ such that a general
point in $\mathcal U_{i+1}$ can be connected to a point in $\mathcal
U_i$ by a minimal rational curve in $Z$
(see Section 4.3 of Hwang-Mok \cite{HwM98} or Section 3 of Mok \cite{Mk08}).
Applying the same arguments
as above inductively, we get that $F(\mathcal U_k)=\mathcal U_k$ and thus we have $F(Z)=Z$.
From Proposition \ref{automorphism groups of horosphercal},
we know that $\Aut(Z)$ is isomorphic to the projective odd symplectic group
$\mbox{PSp}(2\ell-1):=((\Sp(2\ell-2) \times \mathbb C^*)/\{\pm 1\}) \ltimes \mathbb C^{2\ell-2}$.
Consequently, Proposition 3.3 of Mihai \cite{Mihai} implies that there exists $g' \in \Sp(2\ell)$ such that $g' |_Z = F$.
Therefore, $f$ is the restriction of the standard embedding of $Z$ into $X$.
\qed

\section{Smooth Schubert varieties in $F_4$-homogeneous manifolds}

Let us start with the facts about the complex simple Lie algebra $\mathfrak g$ of type $F_4$.
We choose a system $\{\alpha_1, \alpha_2, \alpha_3, \alpha_4\}$ of simple roots
such that $\alpha_3$ and $\alpha_4$ are short roots.
Then the highest long root of $\mathfrak g$ is $2\alpha_1 + 3\alpha_2 + 4\alpha_3 +
2\alpha_4$, hence the grading on $\mathfrak
g$ associated to $\alpha_3$ is of depth 4.
\begin{center}
\begin{picture} (150, 40)
 \put(0,20){$(F_4)$}

 \put(50,20){\circle{5}} 
 \put(80,20){\circle{5}} 
 \put(110,20){\circle{5}} 
 \put(140,20){\circle{5}} 

 \put(52.5,20){\line(1,0){25}} 
 \put(82.5,19){\line(1,0){25}} 
 \put(82.5,21){\line(1,0){25}} 
 \put(112.5,20){\line(1,0){25}} 

 \put(93 ,17.5){$>$}

 \put(45,10){$\alpha_1$} 
 \put(75,10){$\alpha_2$} 
 \put(105,10){$\alpha_3$} 
 \put(135,10){$\alpha_4$} 

 \put(48,30){2} 
 \put(78,30){3} 
 \put(108,30){4} 
 \put(138,30){2} 
 \end{picture}
\end{center}

Let $\mathfrak p$ be the maximal parabolic subalgebra of $\mathfrak
g$ associated to the simple root $\alpha_3$.
Given an integer $k$,
$-4 \leq k \leq 4$, $\Phi_k$ denotes the set of all roots
$\alpha=\sum_{q=1}^{4} c_q \alpha_q$
with the third coefficient $c_3 = k$. Define
$$\mathfrak g_0 = \mathfrak h \oplus \bigoplus_{\alpha\in\Phi_0}
\mathfrak g_{\alpha},\qquad \mathfrak g_k =
\bigoplus_{\alpha\in\Phi_k} \mathfrak g_{\alpha}, \,\, k \neq 0.$$
Then the parabolic subalgebra $\mathfrak p$ is decomposed as a graded Lie algebra
$\mathfrak p =\mathfrak g_0 \oplus \mathfrak g_{1}\oplus \mathfrak
g_{2}\oplus \mathfrak g_{3} \oplus \mathfrak g_{4}$ with
$$\dim\mathfrak g_0 = 12, \, \dim \mathfrak g_1 = 6,\, \dim \mathfrak g_2 =
9,\, \dim \mathfrak g_3 = 2,\, \dim \mathfrak g_4 = 3.$$

Let $X$ be the rational homogeneous manifold $(F_4, \alpha_3)$ associated to the short root $\alpha_3$.
Then $X$ is the closed $F_4$-orbit of the space of lines on the rational homogeneous manifold of type $(F_4, \alpha_4)$,
which is a smooth hyperplane section of the (complex) Cayley plane $\mathbb{OP}^2=(E_6, \alpha_1)$
(cf. Section 6 of Landsberg-Manivel \cite{LM}).

Since $\dim \mathfrak g = 52$ and $\dim \mathfrak p = 32$,
the rational homogeneous manifold $X$ of type $(F_4, \alpha_3)$ is a projective variety of dimension 20.
Let $o = eP$ be the base point of $X=G/P$.
The tangent space $T_o(G/P)$ is canonically isomorphic to
$\mathfrak g / \mathfrak p$ and so the first Chern number of the tangent
bundle $T_{G/P}$ is computed by
$\displaystyle\sum_{\beta\in\Phi_1\cup\cdots\cup\Phi_4} \beta(H_{\alpha_3})$, where $H_{\alpha_3}$ is the coroot of $\alpha_3$, from
the proof of Proposition 1 in Hwang-Mok \cite{HM02}.
Because
$$\sum_{\beta\in\Phi_1} \beta(H_{\alpha_3})+\sum_{\beta\in\Phi_2}
\beta(H_{\alpha_3}) +\sum_{\beta\in\Phi_3}
\beta(H_{\alpha_3})+\sum_{\beta\in\Phi_4}
\beta(H_{\alpha_3})=1+3+1+2=7,$$
the first Chern class of $X$ is $c_1(X)=7L \in H^2(X, \mathbb Z) \cong H^1(X, \mathcal O_X^*)=\Pic(X)$.
Here $L$ is the ample generator with $\Pic(G/P)=\mathbb Z L$ and
gives an embedding $G/P\subset \mathbb P^{272}=\mathbb P(V(\omega_3))$,
where $\omega_3$ is the third fundamental weight of $G$.
Furthermore, $G/P$ is covered by lines of $\mathbb P^{272}$ and
the Chow space $\mathcal K_o$ consists of all lines passing through $o$, which are contained in $G/P$.
Hence the tangent map $\tau_o\colon \mathcal K_o \to\mathcal C_o$ is an embedding
and the variety $\mathcal C_o$ of minimal rational tangents at $o$ is 5-dimensional, because $c_1(G/P)=7L$.

Now we take a choice of a Levi factor $\mathfrak k$ of $\mathfrak p$.
The semisimple part of $\mathfrak k$ is isomorphic to $\mathfrak{sl}(3, \mathbb C)\oplus\mathfrak{sl}(2, \mathbb C)$.
So a reductive subgroup $K\subset P$ with Lie algebra $\mathfrak k$
is isogenous to $\mathbb C^* \times \SL(3, \mathbb C)\times \SL(2, \mathbb C)$
as a complex Lie group.
Under the identification $T_o(G/P)=\mathfrak g / \mathfrak p$,
$\mathfrak g_{-1}\oplus \mathfrak g_{-2}\oplus \mathfrak g_{-3}
\oplus \mathfrak g_{-4}$ is the graded decomposition into irreducible $K$-modules.

Let $E$ be a 3-dimensional complex vector space with dual $E^*$ with respect to the standard inner product on $E$ and $Q$ be a 2-dimensional complex vector space.
Then, we can check the following $K$-module isomorphisms:
$$\mathfrak g_{-1}=E^* \otimes Q, \quad
\mathfrak g_{-2}=\wedge^2 E^* \otimes S^2 Q, \quad \mathfrak
g_{-3}=Q,\quad \mathfrak g_{-4}=E^*.$$
In particular, one can determine
the highest weight variety $\mathcal W_1 
\subset \mathbb P \mathfrak
g_{-1}$ consisting of highest weight vectors of the irreducible
$K$-module $\mathfrak g_{-1}$.
Because the highest weight variety $\mathcal W_1 \subset \mathbb P
\mathfrak g_{-1}$ of $X$ is a homogeneous manifold associated to the
marked Dynkin diagram having markings corresponding to the simple
roots $\alpha_2$ and $\alpha_4$ which are adjacent to $\alpha_3$ in
the Dynkin diagram of the semisimple part of $P$, we have
$\mathcal W_1=\mathbb P^2\times\mathbb P^1\subset\mathbb P^5$ embedded in the
Segre embedding and its affine cone is equal to
$\{e^* \otimes q \in E^*
\otimes Q : e^* \in E^*, q \in Q\} \backslash \{0\}$.

\begin{picture} (350, 40)
 \put(-15,20){$(F_4, \alpha_3)$}

 \put(40,20){\circle{5}} 
 \put(70,20){\circle{5}} 
 \put(95.8,18){$\times$} 
 \put(131,20){\circle{5}} 

 \put(42.5,20){\line(1,0){25}} 
 \put(72.5,19){\line(1,0){25}} 
 \put(72.5,21){\line(1,0){25}} 
 \put(102.5,20){\line(1,0){25.5}} 

 \put(83,17.5){$>$} 

 \put(145,18){$\longrightarrow$} 

 \put(184,20){\circle{5}} 
 \put(210,18){$\times$} 
 \put(270,18){$\times$} 

 \put(186.5,20){\line(1,0){25}}

 \put(300,20){$\mathcal W_1 \subset \mathbb P \mathfrak g_{-1}$}
 \end{picture}

The variety $\mathcal C_o(X)$ of minimal rational tangents at the base point $o \in X$
contains the highest weight variety $\mathcal W_1$ in $\mathbb P \mathfrak g_{-1}$
but $\mathcal C_o(X)$ is strictly bigger than $\mathcal W_1$ since $\dim \mathcal C_o(X)=5$.
Hence, we consider the highest weight variety $\mathcal W_2 \subset \mathbb P \mathfrak g_{-2}$ with respect to the $K$-action, which we expect to be contained in $\mathcal C_o(X)$.
The affine cone of $\mathcal W_2$ is equal to 
$$\{(e_1^* \wedge e_2^*) \otimes q^2 \in \wedge^2 E^* \otimes S^2 Q :
  e_1^*, e_2^* \in E^*, q \in Q\} \backslash \{0\}.$$
By Section 3 of Hwang-Mok \cite{HwM04b} or Proposition 6.9 of Landsberg-Manivel \cite{LM},
$\mathcal C_o(X)$ is contained in $\mathbb P(\mathfrak g_{-1}\oplus\mathfrak g_{-2})$
and is the projectivization of the affine cone
$$\{e^* \otimes q + (e_1^* \wedge e_2^*) \otimes q^2 :
  e^* \wedge e_1^* \wedge e_2^* = 0 , \, e^*, e_1^*, e_2^* \in E^*, q \in Q\} \backslash \{0\}$$
in $(E^* \otimes Q) \oplus (\wedge^2 E^* \otimes S^2 Q)$.
Since $\wedge^2 E^*$ is isomorphic to $E$ as $\SL(E)$-modules, we will
make a fixed choice of the identification.
Now, we denote a subvariety $\mathcal C_o(X) \subset \mathbb P((E^* \otimes Q) \oplus (E \otimes S^2 Q))$ by $\mathcal A$.
Then the affine cone
$\widetilde{\mathcal A}$ over $\mathcal A$ is given by
\begin{eqnarray*}
\widetilde{\mathcal A}=\{e^* \otimes q + f \otimes q^2 :\,\, \langle
e^*,f \rangle =0 ,\,\, e^* \in E^*, f \in E, q \in Q\} \backslash \{0\},
\end{eqnarray*}
where $\langle e^*, f \rangle$ denotes the evaluation of $e^*$ at $f$.
Under the projection map $e^* \otimes q + 
f \otimes q^2 \mapsto q$,
$\mathcal A$ is a fiber bundle over $\mathbb P(Q)=\mathbb P^1$ with
fibers which are isomorphic to a $4$-dimensional quadric $\mathbb Q^4$.
In other words, $\mathcal A$ is the Grassmannian bundle of 2-planes of
the vector bundle $\mathcal E^*$ on $\mathbb P^1$,
where $\mathcal E$ is a vector bundle of rank 4 which splits as $\mathcal O(1)^3 \oplus \mathcal O$.
In fact, the Pl\"ucker line bundle $\xi$ on $\Gr(2, \mathcal E^*)$ defines
an embedding of $\Gr(2, \mathcal E^*)$ into $\mathbb P H^0(\Gr(2, \mathcal E^*), \xi)$.
Since $H^0(\Gr(2, \mathcal E^*), \xi)=H^0(\mathbb P^1, \wedge^2 \mathcal E)=H^0(\mathbb P^1, \mathcal O(1)^3 \oplus \mathcal O(2)^3)$,
under the identification $\mathbb P^1=\mathbb P(Q^*)$, we have $H^0(\Gr(2, \mathcal E^*), \xi)=(E^* \otimes Q) \oplus (\wedge^2 E^* \otimes S^2 Q)$.
For the detailed descriptions as a projective variety, see Section 2 of Hwang-Mok \cite{HwM04b}.

\begin{lemma}\label{second fundamental form:F4}
Let $X$ be the rational homogeneous manifold of type $(F_4, \alpha_3)$
and $\mathcal A$ be the variety of minimal rational tangents of $X$
at a point $x\in X$. The tangent space $T_{\beta}$ of
$\widetilde{\mathcal A}$ at $\beta \in \widetilde{\mathcal A}$ is
given by
\begin{eqnarray*}
T_{\beta} &=&\{ e^* \otimes q' + e'^* \otimes q +f' \otimes q^2 + f \otimes (2q\circ q') \\
 & & :\,\, \langle e'^*,f \rangle + \langle e^*,f' \rangle =0 ,\,\, e'^* \in E^*, f' \in E, q' \in Q \}
 \text{ if } \beta = e^* \otimes q + f \otimes q^2, \\
T_{\beta} &=& \{ e^* \otimes q' + e'^* \otimes q +f' \otimes q^2 :
\,\, \langle e^*,f' \rangle =0 ,\,\, e'^* \in E^*, f' \in E, q' \in
Q \} \\ & & \text{ if } \beta = e^* \otimes q.
\end{eqnarray*}
The second fundamental form $\sigma \colon T_{\beta} \times
T_{\beta} \longrightarrow (T_xX)/T_{\beta} $ of $\widetilde{\mathcal A}
\subset T_xX$ at $\beta \in \widetilde{\mathcal A}$ is given as
follows:
\begin{enumerate}

\item[(I)] for $\beta=e^* \otimes q + f \otimes q^2$,
\begin{eqnarray*}
&&\sigma(e^* \otimes q' + f \otimes (2q \circ q') ,\,\, e^* \otimes
q'' + f \otimes (2q \circ q''))
= f \otimes (2q' \circ q'')\\
&&\sigma(e^* \otimes q' + f \otimes (2q \circ q') ,\,\, e'^* \otimes q)= e'^* \otimes q' \\
&&\sigma(e^* \otimes q' + f \otimes (2q \circ q') ,\,\, f' \otimes q^2) = f' \otimes (2q \circ q') \\
&&\sigma(e'^* \otimes q ,\,\, e''^* \otimes q) = 0\\
&&\sigma(e'^* \otimes q ,\,\, f' \otimes q^2) = (-\langle e'^*,f' \rangle e)\otimes q^2 \\
&&\sigma(f' \otimes q^2, \,\, f'' \otimes q^2)=0{\rm;}
\end{eqnarray*}

\item[{(II)}] for $\beta=e^* \otimes q$,
\begin{eqnarray*}
&&\sigma(e^* \otimes q' ,\,\, e^* \otimes q'')= 0\\
&&\sigma(e^* \otimes q' ,\,\, e'^* \otimes q) = e'^* \otimes q' \\
&&\sigma(e^* \otimes q' ,\,\, f' \otimes q^2) = f' \otimes (2q \circ q') \\
&&\sigma(e'^* \otimes q ,\,\, e''^* \otimes q) = 0\\
&&\sigma(e'^* \otimes q ,\,\, f' \otimes q^2) = (-\langle e'^*,f' \rangle e)\otimes q^2 \\
&&\sigma(f' \otimes q^2, \,\, f'' \otimes q^2)=0,
\end{eqnarray*}

\end{enumerate}
where $e'^*, e''^* \in E^*, f', f'' \in E$ and $q', q'' \in Q$.
\end{lemma}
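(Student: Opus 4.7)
The plan is to follow the approach of Lemma \ref{second fundamental form}. First I would identify $T_\beta\widetilde{\mathcal A}$ by differentiating explicit analytic curves in $\widetilde{\mathcal A}$ through $\beta$; then I would apply the Gauss-map description of the second fundamental form reviewed in Section 2.2, namely: for $\xi,\zeta \in T_\beta\widetilde{\mathcal A}$ choose a holomorphic curve $\beta_t$ with $\beta_0=\beta$, $\beta_0'=\xi$, together with a moving vector field $\rho_t \in T_{\beta_t}\widetilde{\mathcal A}$ satisfying $\rho_0=\zeta$, and read off $\sigma(\xi,\zeta)=\rho'(0)\bmod T_\beta\widetilde{\mathcal A}$.

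For the tangent space, every curve in $\widetilde{\mathcal A}$ through $\beta$ has the form $\beta_t=e_t^*\otimes q_t + f_t\otimes q_t^2$ subject to $\langle e_t^*,f_t\rangle\equiv 0$. Differentiating this relation at $t=0$ gives $\langle e'^*,f\rangle+\langle e^*,f'\rangle=0$, which together with the free variation of $q_t$ yields the stated formula for $T_\beta$ when $\beta=e^*\otimes q+f\otimes q^2$; when $\beta=e^*\otimes q$ the constraint degenerates to $\langle e^*,f'\rangle=0$. A dimension count against $\dim\widetilde{\mathcal A}=6$ confirms that these velocity vectors exhaust $T_\beta\widetilde{\mathcal A}$.

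For most entries of $\sigma$, one picks the obvious one-parameter deformation of a single factor ($e_t^*$, $q_t$, or $f_t$) and an equally obvious moving vector field $\rho_t\in T_{\beta_t}\widetilde{\mathcal A}$, then differentiates via the Leibniz rule. The nonzero terms $f\otimes(2q'\circ q'')$, $e'^*\otimes q'$, and $f'\otimes(2q\circ q')$ arise this way, while the various vanishing entries follow because $\rho_t$ can be chosen independent of $t$ modulo $T_\beta\widetilde{\mathcal A}$.

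The main obstacle, and the only entry that requires genuine second-order information from the defining relation, is $\sigma(e'^*\otimes q,\,f'\otimes q^2)=-\langle e'^*,f'\rangle\,e\otimes q^2$. Starting from a curve with $e_t^*=e^*+te'^*$ and the other factors stationary, one \emph{cannot} take $\rho_t=f'\otimes q^2$, since then $\langle e^*+te'^*, f'\rangle\neq 0$ in general and $\rho_t$ fails to lie in $T_{\beta_t}\widetilde{\mathcal A}$. Correcting to $\rho_t=(f'+t\,h_t)\otimes q^2$ and enforcing $\langle e_t^*, f'+t\,h_t\rangle=0$ forces $h_0\in E$ to satisfy $\langle e^*,h_0\rangle=-\langle e'^*,f'\rangle$; under the identification $E^*\cong E$ afforded by the standard inner product, the choice $h_0=-\langle e'^*,f'\rangle\,e$ solves this relation, and then $\rho'(0)\equiv h_0\otimes q^2 \bmod T_\beta\widetilde{\mathcal A}$ gives the asserted formula. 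Once this delicate case is handled, the remainder of the lemma is bookkeeping along the lines of the symplectic proof.
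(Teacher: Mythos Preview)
Your proposal is correct and follows essentially the same approach as the paper's proof: both compute $T_\beta\widetilde{\mathcal A}$ by differentiating parametrized curves in the cone, then evaluate $\sigma$ via the Gauss-map recipe $\sigma(\beta_0',\rho_0)=\rho'(0)\bmod T_\beta$, and both single out $\sigma(e'^*\otimes q,\,f'\otimes q^2)$ as the one entry requiring a nontrivial correction of the moving field (forced by differentiating the constraint $\langle e_t^*,f_t\rangle=0$). The paper's proof is organizationally more explicit---it lists six curve families (1)--(6) and runs through the twelve cases---but the mathematical content is identical; your remark that the remaining entries are ``bookkeeping along the lines of the symplectic proof'' is accurate.
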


\begin{proof}
This is given in Lemma 4.2 of Hong-Park \cite{HP} without details.
We give the details of the proof.
First, to obtain the tangent space $T_{\beta} \widetilde{\mathcal A}$,
we consider the velocity vectors of curves in the affine cone
$\widetilde{\mathcal A}$. Let $\{e^*_t\} \subset E^*$,
$\{f_t\} \subset E$ and $\{q_t\} \subset Q$ be curves with $e^*_0=e^*$, $f_0=f$ and $q_0=q$, respectively.
Assuming $\langle e^*,f \rangle =0$, the curve $e^* \otimes q_t + f \otimes q_t^2$
lies in the affine cone $\widetilde{\mathcal A}$ and passes through
a point $e^* \otimes q + f \otimes q^2$.
Since its velocity vector is $e^* \otimes q' + f \otimes (2 q\circ q')$ for some $q' \in Q$,
$e^* \otimes q' + f \otimes (2 q\circ q')\in T_{\beta} \widetilde{\mathcal A}$.
If we take $e^*_t$ and $f_t$ satisfying $\langle e^*_t ,f_t \rangle =0$ and $f_0=f$,
then $e^*_t \otimes q + f_t \otimes q^2$ is a curve passing through a point
$e^* \otimes q + f \otimes q^2$ in $\widetilde{\mathcal A}$
and its velocity vector is $e'^* \otimes q + f' \otimes q^2$
for some $e'^* \in E^*$, $f'\in E$ such that $\langle e'^*,f \rangle + \langle e^*,f' \rangle =0$.
Next, for the curve $\beta_t=e^* \otimes q_t + f_t \otimes q_t^2$
with $f_0=0$ and $\langle e^*, f_t \rangle =0$, $\beta_0=e^* \otimes q$
and $\frac{d}{dt}|_{t=0} \beta_t
= e^* \otimes (\frac{d}{dt}|_{t=0} q_t) + (\frac{d}{dt}|_{t=0} f_t) \otimes q^2 + f_0 \otimes (\frac{d}{dt}|_{t=0} q_t^2)
= e^* \otimes q'+f' \otimes q^2$ for some $f'\in E$, $q' \in Q$
such that $\langle e^*, f' \rangle =0$.

By a similar computation as in Lemma \ref{second fundamental form},
we get the above results. Let $\{e^*_t\} \subset E^*$, $\{f_t\}
\subset E$ and $\{q_t\} \subset Q$ be curves with $e^*_0=e^*$, $f_0=f$ and
$q_0=q$, respectively. Then the holomorphic curves $[T_{\beta_t}]$ in $\Gr(d,T_xX)$
for $\{ \beta_t \} \subset \widetilde{\mathcal A}$ such that
$\beta_0 = \beta$ are as follows:
\begin{enumerate}
\item[{\rm (1)}] for $\beta_t=e^* \otimes q_t + f \otimes q_t^2$,
$T_{\beta_t}=\{ e^* \otimes q' + e'^* \otimes q_t +f' \otimes q_t^2
+ f \otimes (2q_t \circ q') : \langle e'^*,f \rangle + \langle e^*,
f' \rangle =0 ,\,\, e'^* \in E^*, f' \in E, q' \in Q \}$;

\item[{\rm (2)}] for $\beta_t=e^*_t \otimes q+f \otimes q^2$,
$T_{\beta_t}=\{ e^*_t \otimes q' + e'^* \otimes q + f' \otimes q^2 +
f \otimes (2q \circ q') : \langle e'^*,f \rangle + \langle e^*_t ,
f' \rangle =0 ,\,\, e'^* \in E^*, f' \in E, q' \in Q \}$;

\item[{\rm (3)}] for $\beta_t= e^* \otimes q + f_t \otimes q^2$ with $f_0=f$,
$T_{\beta_t}=\{ e^* \otimes q' + e'^* \otimes q +f' \otimes q^2 +
f_t \otimes (2q\circ q') : \langle e'^*,f_t \rangle  + \langle e^*,
f' \rangle =0 ,\,\, e'^* \in E^*, f' \in E, q' \in Q \}$;

\item[{\rm (4)}] for $\beta_t=e^* \otimes q_t$,
$T_{\beta_t}=\{ e^* \otimes q' + e'^* \otimes q_t +f' \otimes q_t^2
: \langle e^*, f' \rangle =0 ,\,\, e'^* \in E^*, f' \in E, q' \in Q
\}$;

\item[{\rm (5)}] for $\beta_t=e^*_t \otimes q$,
$T_{\beta_t}=\{ e^*_t \otimes q' + e'^* \otimes q + f' \otimes q^2 :
\langle e^*_t , f' \rangle =0 ,\,\, e'^* \in E^*, f' \in E, q' \in Q
\}$;

\item[{\rm (6)}] for $\beta_t= e^* \otimes q + f_t \otimes q^2$ with $f_0=0$,
$T_{\beta_t}=\{ e^* \otimes q' + e'^* \otimes q +f' \otimes q^2 +
f_t \otimes (2q\circ q') : \langle e'^*,f_t \rangle  + \langle e^*,
f' \rangle =0 ,\,\, e'^* \in E^*, f' \in E, q' \in Q \}$.
\end{enumerate}

As in Lemma \ref{second fundamental form}, the second fundamental
form is computed in the following manner :
 $\sigma(\frac{d}{dt}|_{t=0}\beta_t,\rho_0)=\frac{d}{dt}|_{t=0}\rho_t$,
 where $\rho_t$ is a vector field 
along the curve $\beta_t$
 such that $\rho_t \in T_{\beta_t}$ for every $t$.\\

\noindent {\rm (}{\it Case I : $\beta=e^* \otimes q + f \otimes q^2$}{\rm )}.
(i) We take a curve $\beta_t=e^* \otimes q_t + f \otimes q_t^2$ as
in (1) and assume that $\frac{d}{dt}|_{t=0}q_t=q'$. Then $\beta_0=e^*
\otimes q + f \otimes q^2=\beta$ and $\frac{d}{dt}|_{t=0}
\beta_t=e^* \otimes q' + f \otimes (2q \circ q')$. Since $e^*
\otimes q'' + f \otimes (2q_t \circ q'')\in T_{\beta_t}$ for any
$t$, the differential $\frac{d}{dt}|_{t=0} [T_{\beta_{t}}] \colon
T_{\beta} \,\, \rightarrow \,\, V/T_{\beta}$ maps $e^* \otimes q'' +
f \otimes (2q \circ q'') \in T_{\beta}$ to $\frac{d}{dt}|_{t=0} (e^*
\otimes q'' + f \otimes (2q_t \circ q'')) = f \otimes
(2(\frac{d}{dt}|_{t=0} q_t) \circ q'')= f \otimes (2q' \circ q'')$.
Thus we have $\sigma(e^* \otimes q' + f \otimes (2q \circ q') ,\,\,
e^* \otimes q'' + f \otimes (2q \circ q'')) = f \otimes (2q' \circ
q'')$.\\
(ii) If $e'^* \otimes q \in T_{\beta}$, then the relation $\langle
e'^*,f \rangle =0$ holds. For the above curve $\beta_t=e^* \otimes
q_t + f \otimes q_t^2$, $e'^* \otimes q_t \in T_{\beta_t}$ for any
$t$. So $\sigma(e^* \otimes q' + f \otimes (2q \circ q') ,\,\, e'^*
\otimes q) = \frac{d}{dt}|_{t=0}(e'^* \otimes q_t) = e'^* \otimes
q'$.\\
(iii) If $f' \otimes q^2 \in T_{\beta}$, then the relation $\langle
e^*,f' \rangle =0$ holds. For the above curve $\beta_t=e^* \otimes
q_t + f \otimes q_t^2$, $f' \otimes q_t^2 \in T_{\beta_t}$ for any
$t$. So $\sigma(e^* \otimes q' + f \otimes (2q \circ q') ,\,\, f'
\otimes q^2) = \frac{d}{dt}|_{t=0}(f' \otimes q_t^2) = f' \otimes (2q \circ q')$.\\
(iv) Taking a curve $\beta_t=e^*_t \otimes q+f \otimes q^2$ as in (2)
such that $\frac{d}{dt}|_{t=0}e^*_t=e'^*$, $\beta_0=e^* \otimes q+f
\otimes q^2=\beta$ and $\frac{d}{dt}|_{t=0} \beta_t = e'^* \otimes
q$. If $e''^* \otimes q \in T_{\beta}$, then the relation $\langle
e''^*,f \rangle =0$ holds. Since $e''^* \otimes q \in T_{\beta_t}$
for any $t$, we obtain $\sigma(e'^* \otimes q ,\,\, e''^* \otimes q)
=\frac{d}{dt}|_{t=0} e''^* \otimes q = 0$.\\
(v) We take the above curve $\beta_t=e^*_t \otimes q+f \otimes q^2$
and a vector field $f_t \otimes q^2$ along $\beta_t$ with $f_0=f'$.
Then $f_t \otimes q^2 \in T_{\beta_t}$ whenever $\langle e_t^*, f_t
\rangle=0$ for any $t$. Differentiating the equation $\langle e_t^*,
f_t \rangle=0$ at $t=0$, we have $\langle e'^*, f' \rangle + \langle
e^*, (\frac{d}{dt}|_{t=0}f_t) \rangle=0$. Hence
$\frac{d}{dt}|_{t=0}f_t = -\langle e'^*, f' \rangle e$ and so
$\sigma(e'^* \otimes q ,\,\, f' \otimes q^2) = \frac{d}{dt}|_{t=0}
f_t \otimes q^2
= (-\langle e'^*,f' \rangle e)\otimes q^2$.\\
(vi) Taking a curve $\beta_t=e^* \otimes q + f_t \otimes q^2$ as in (3)
such that $f_0=f$ and $\frac{d}{dt}|_{t=0} f_t=f'$,
$\beta_0=e^* \otimes q+f \otimes q^2=\beta$ and
$\frac{d}{dt}|_{t=0} \beta_t = f' \otimes q^2$.
Since $f'' \otimes q^2 \in T_{\beta_t}$ for any $t$, we obtain
$\sigma(f' \otimes q^2, \,\, f'' \otimes q^2)=\frac{d}{dt}|_{t=0} f'' \otimes q^2=0$. \vspace{1.7mm}

\noindent  {\rm (}{\it Case II : $\beta=e^* \otimes q$}{\rm )}. (i) Now take a
curve $\beta_t=e^* \otimes q_t$ as in (4) and assume that
$\frac{d}{dt}|_{t=0}q_t=q'$. Then $\beta_0=e^* \otimes q=\beta$ and
$\frac{d}{dt}|_{t=0} \beta_t=e^* \otimes q'$. Since $e^* \otimes q''
\in T_{\beta_t}$ for any $t$, we have $\sigma(e^* \otimes q' ,\,\,
e^* \otimes q'')=\frac{d}{dt}|_{t=0} e^* \otimes q'' = 0$.\\
(ii) For the above curve $\beta_t=e^* \otimes q_t$, $e'^* \otimes
q_t \in T_{\beta_t}$ for any $t$. So $\sigma(e^* \otimes q' ,\,\,
e'^* \otimes q) = \frac{d}{dt}|_{t=0}(e'^* \otimes q_t) = e'^* \otimes q'$.\\
(iii) If $f' \otimes q^2 \in T_{\beta}$, then the relation $\langle
e^*,f' \rangle =0$ holds. For the above curve $\beta_t=e^* \otimes
q_t$, $f' \otimes q_t^2 \in T_{\beta_t}$ for any $t$. So $\sigma(e^*
\otimes q' ,\,\, f' \otimes q^2) = \frac{d}{dt}|_{t=0}(f' \otimes
q_t^2)
= f' \otimes (2q \circ q')$.\\
(iv) Taking a curve $\beta_t=e^*_t \otimes q$ as in (5) such that
$\frac{d}{dt}|_{t=0}e^*_t=e'^*$, $\beta_0=e^* \otimes q=\beta$ and
$\frac{d}{dt}|_{t=0} \beta_t = e'^* \otimes q$. Since $e''^* \otimes
q \in T_{\beta_t}$ for any $t$, we obtain $\sigma(e'^* \otimes q
,\,\, e''^* \otimes q)
=\frac{d}{dt}|_{t=0} e''^* \otimes q = 0$.\\
(v) We take the above curve $\beta_t=e^*_t \otimes q$ and a vector
field $f_t \otimes q^2$ along $\beta_t$ with $f_0=f'$. Then $f_t
\otimes q^2 \in T_{\beta_t}$ whenever $\langle e_t^*, f_t \rangle=0$
for any $t$. Differentiating this equation, we know
$\frac{d}{dt}|_{t=0}f_t = -\langle e'^*, f' \rangle e$ as in (v) of
Case I.
Hence $\sigma(e'^* \otimes q ,\,\, f' \otimes q^2) =
\frac{d}{dt}|_{t=0} f_t \otimes q^2
= (-\langle e'^*,f' \rangle e)\otimes q^2$.\\
(vi) Taking a curve $\beta_t=e^* \otimes q + f_t \otimes q^2$ as in
(6) such that $f_0=0$ and $\frac{d}{dt}|_{t=0} f_t=f'$, $\beta_0=e^*
\otimes q=\beta$ and $\frac{d}{dt}|_{t=0} \beta_t = f' \otimes q^2$.
Since $f'' \otimes q^2 \in T_{\beta_t}$ for any $t$, we obtain
$\sigma(f' \otimes q^2, \,\, f'' \otimes q^2)=\frac{d}{dt}|_{t=0}
f'' \otimes q^2=0$.
\end{proof}

Hong-Kwon \cite{HK} have classified smooth Schubert varieties in the $F_4$-homogeneous manifold $(F_4, \alpha_3)$.
Thus, for the proof of Theorem \ref{Main theorem}, it suffices to consider the only two cases for $Z$:

\begin{lemma}\label{Schubert varieties:F4}
Let $X$ be the rational homogeneous manifold of type $(F_4, \alpha_3)$.
A nonhomogeneous smooth Schubert variety $Z$ of $X$ is one of the followings:
\begin{enumerate}
\item[{\rm (1)}] the horospherical variety $(B_3, \alpha_2, \alpha_3)$,
\item[{\rm (2)}] the horospherical variety $(C_2, \alpha_2, \alpha_1)$
which is isomorphic to a smooth Schubert variety $\Gr_{\omega}(2, 6; F_0, F_5)$
in the symplectic Grassmannian $(C_3, \alpha_2)$.
\end{enumerate}
\end{lemma}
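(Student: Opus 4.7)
The plan is to reduce the statement to the classification work of Hong--Kwon \cite{HK} and then recognize the resulting varieties geometrically. First, I would recall the full list of smooth Schubert varieties in $(F_4, \alpha_3)$ from \cite{HK}: each one is determined by a Weyl group element $w$ whose Bruhat cell closure is smooth, and the list is small enough to enumerate explicitly. Within that list, the homogeneous ones are exactly those associated to subdiagrams of the marked Dynkin diagram of $(F_4, \alpha_3)$ (by Section~2 of Hong--Mok \cite{HoM2}); these can be crossed off since the lemma concerns \emph{nonhomogeneous} ones. What remains must be identified.

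Next, I would use Pasquier's classification (Proposition~\ref{horosphercal}) together with the fact cited in Section~2.1 that all smooth Schubert varieties in rational homogeneous manifolds of Picard number $1$ are horospherical. Any nonhomogeneous smooth horospherical variety of Picard number $1$ must appear on Pasquier's list (1)--(5). Combining this constraint with the Bruhat-cell data of the Schubert varieties left over after the previous step, I would match each nonhomogeneous candidate to a triple $(G, P_Y, P_Z)$ on Pasquier's list by comparing dimensions, closed $G$-orbit structure, and the weights that appear in the tangent space at the base point (using Proposition~2.6 of Kim \cite{Kim}). This forces the two candidates to be $(B_3, \alpha_2, \alpha_3)$ and $(C_2, \alpha_2, \alpha_1)$, corresponding respectively to entries~(1) with $n=3$ and (3) with $n=k=2$ on Pasquier's list.

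Finally, for item (2) I would exhibit the isomorphism with the Schubert variety $\Gr_\omega(2, 6; F_0, F_5)$ of the symplectic Grassmannian $(C_3, \alpha_2)$. This can be done directly: by Lemma~\ref{smooth Schubert variety: symplectic}(2) with $\ell=3$, $k=2$, $a=0$, $b=5=2\ell-a-1$, the variety $\Gr_\omega(2,6; F_0, F_5)$ is an odd symplectic Grassmannian of type $(C_2, \alpha_2, \alpha_1)$, matching the horospherical label. The relevant identification of the two closed $\Sp(4)$-orbits with $\Gr_\omega(2, 4)=(C_2, \alpha_2)$ and $\mathbb P^3=(C_2, \alpha_1)$ then confirms the isomorphism.

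The main obstacle is the bookkeeping in the middle step: one must verify that among the finitely many nonhomogeneous smooth Schubert varieties produced by \cite{HK} there are exactly two, and that each matches one specific entry of Pasquier's list. Because \cite{HK} has already carried out the case-by-case analysis, in practice the proof reduces to a citation, but to make it honest one should still check the horospherical labels by computing closed $G$-orbits (or equivalently the two subdiagrams controlling the tangent representation) for each candidate Schubert cell.
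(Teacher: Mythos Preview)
Your proposal is correct and follows the same route as the paper, namely invoking the Hong--Kwon classification \cite{HK}; in fact the paper gives no proof at all for this lemma beyond the sentence ``Hong--Kwon \cite{HK} have classified smooth Schubert varieties in the $F_4$-homogeneous manifold $(F_4, \alpha_3)$'' placed immediately before the statement. Your additional steps---crossing off the homogeneous subdiagram cases, matching the survivors against Pasquier's list via dimensions and orbit data, and verifying the identification $(C_2,\alpha_2,\alpha_1)\cong \Gr_\omega(2,6;F_0,F_5)$ through Lemma~\ref{smooth Schubert variety: symplectic}(2)---are a reasonable expansion of what the paper leaves implicit, but they do not constitute a different method.
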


\begin{remark}
Recall that all nonlinear homogeneous submanifolds
associated to subdiagrams of the marked Dynkin diagram of $(F_4, \alpha_3)$ are
$(B_3, \alpha_3)$ and $(C_3, \alpha_2)$.
As considered in Section 3,
the odd symplectic Grassmannian $(C_2, \alpha_2, \alpha_1)$ is
a unique nonhomogeneous smooth Schubert variety of $(C_3, \alpha_2)$.

 \begin{picture} (150, 40)
 \put(30,20){\circle{5}} 
 \put(60,20){\circle{5}} 
 \put(85.8,18.2){$\times$} 
 \put(121,20){\circle{5}} 

{\color{blue}
 \put( 20, 5){\line(0,1){30}} 
 \put( 20,5){\line(1,0){85}}
 \put(105,35){\line( 0,-1){30}}
 \put(105,35){\line(-1,0){85}}}

 \put(32.5,20){\line(1,0){25}} 
 \put(62.5,19){\line(1,0){25}} 
 \put(62.5,21){\line(1,0){25}} 
 \put(92.5,20){\line(1,0){25.5}} 

 \put(73 ,17.5){$>$} 

 \put(130 ,20){$(B_3, \alpha_3)$ }

 \put(200,20){\circle{5}} 
 \put(230,20){\circle{5}} 
 \put(255.8,18.2){$\times$} 
 \put(291,20){\circle{5}} 

{\color{blue}
 \put( 220, 5){\line(0,1){30}} 
 \put( 220,5){\line(1,0){85}}
 \put(305,35){\line( 0,-1){30}}
 \put(305,35){\line(-1,0){85}}}

 \put(202.5,20){\line(1,0){25}} 
 \put(232.5,19){\line(1,0){25}} 
 \put(232.5,21){\line(1,0){25}} 
 \put(262.5,20){\line(1,0){25.5}} 

 \put(243 ,17.5){$>$} 

 \put(315 ,20){$(C_3, \alpha_2)$ }
 \end{picture}
\end{remark}

\begin{lemma}\label{VMRT of submanifolds:F4}
Let $Z$ be a nonhomogeneous smooth Schubert variety of the rational homogeneous manifold of type $(F_4, \alpha_3)$.
Then the variety $\mathcal B$ of minimal rational tangents of $Z$ at a general point $z \in Z$ is
\begin{enumerate}
\item[{\rm (1)}] a $\mathbb P^2$-bundle $\mathbb P(\mathcal O(-1) \oplus \mathcal O(-2)^2)$
over $\mathbb P(Q)=\mathbb P^1$ if $Z$ is of type $(B_3, \alpha_2, \alpha_3)$,
\item[{\rm (2)}] a $\mathbb P^1$-bundle  $\mathbb P(\mathcal O(-1) \oplus \mathcal O(-2))$
over $\mathbb P(Q)=\mathbb P^1$ if $Z$ is of type $(C_2, \alpha_2, \alpha_1)$.
\end{enumerate}
\end{lemma}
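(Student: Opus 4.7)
Proof proposal. The plan is to compute $\mathcal B$ as the linear section $\mathcal A \cap \mathbb P(T_zZ)$ at a general point $z \in Z$. This intersection-based computation is valid because $Z$ is smooth and its minimal rational curves are precisely the lines of $X \subset \mathbb P(V(\omega_3))$ that lie in $Z$, so their tangent directions at $z$ lie in both $\mathcal C_z(X)$ and $\mathbb P(T_zZ)$.

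First I would identify the subspace $T_zZ \cap (\mathfrak g_{-1} \oplus \mathfrak g_{-2})$ of $T_zX$ explicitly. Since $Z$ is a smooth Schubert variety of $X$, its tangent space at a general point of its open $B$-orbit is a sum of negative root spaces of $\mathfrak f_4$ determined by the Schubert data (from Hong-Kwon's classification). Under the $K$-module identifications $\mathfrak g_{-1} = E^* \otimes Q$ and $\mathfrak g_{-2} = E \otimes S^2 Q$, this gives an explicit linear subspace adapted to the subdiagram $B_3 \subset F_4$ in case (1), and to $C_3 \subset F_4$ further restricted so as to reflect the horospherical rank-1 structure of $(C_2, \alpha_2, \alpha_1)$ inside $(C_3, \alpha_2)$ in case (2).

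Second, I would intersect this subspace with $\widetilde{\mathcal A} = \{ e^* \otimes q + f \otimes q^2 : \langle e^*, f \rangle = 0 \}$ and project to $\mathbb P(Q) = \mathbb P^1$. The fiber of $\mathcal B$ over $[q] \in \mathbb P^1$ should then be a linear subspace of the fiber quadric $\mathbb Q^4$, of projective dimension $2$ in case (1) and $1$ in case (2), matching the expected fiber dimensions of the claimed $\mathbb P^2$- and $\mathbb P^1$-bundles. The twisting of the bundle summands would be read off from the weighted description of $\widetilde{\mathcal A}$: vectors of the form $e^* \otimes q$ are linear in $q$ and contribute an $\mathcal O(-1)$-summand, while vectors of the form $f \otimes q^2$ are quadratic in $q$ and contribute an $\mathcal O(-2)$-summand. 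Counting how many summands of each type survive the linear section produces the splittings $\mathcal O(-1) \oplus \mathcal O(-2)^2$ and $\mathcal O(-1) \oplus \mathcal O(-2)$, respectively.

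The main obstacle is the first step, namely pinning down $T_zZ$ precisely in the $(E, Q)$-coordinates. This requires careful root-space bookkeeping combined with a correct interpretation of the embedding of the horospherical Schubert varieties into $(F_4, \alpha_3)$ via Hong-Kwon's classification. Once the tangent subspace is identified, the linear intersection with $\widetilde{\mathcal A}$ and the identification of the bundle twists follow in a routine way from the already-established Grassmannian-bundle description of $\mathcal A = \Gr(2, \mathcal E^*)$ with $\mathcal E = \mathcal O(1)^3 \oplus \mathcal O$ over $\mathbb P(Q)$.
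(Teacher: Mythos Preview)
Your approach is correct and is in fact exactly the computation the paper carries out, but not in the formal proof of the lemma itself. The paper's proof of this lemma is purely by citation: for case~(1) it invokes Hong--Kim \cite{HKi}, who compute the VMRT of $(B_n,\alpha_{n-1},\alpha_n)$ via Chern numbers on the graded tangent space, and for case~(2) it points back to Section~3, where the VMRT of the odd symplectic Grassmannian was already described. Your linear-section computation is instead what the paper does in the paragraphs \emph{immediately following} the lemma, where $T_zZ$ is identified using Proposition~25 of Kim \cite{Kim} (giving $\mathfrak g'_{-1}=F^*\otimes Q$, $\mathfrak g'_{-2}={F^*}^{\perp}\otimes S^2Q$ in case~(1), and the analogous one-dimensional $F'\subset {F^*}^{\perp}$ in case~(2)), and then $\mathcal B$ is written explicitly as $\mathcal A\cap\mathbb P(T_zZ)$, confirming the bundle splittings. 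So your plan matches the paper's working, and the ``main obstacle'' you flag is resolved there precisely by appealing to Kim's gradation description rather than redoing the root-space bookkeeping from scratch.
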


\begin{proof} (1) Hong and Kim \cite{HKi} showed that
the variety of minimal rational tangents of the horospherical variety $(B_n, \alpha_{n-1}, \alpha_n)$ is
$\mathbb P(\mathcal O_{\mathbb P^{1}}(-1) \oplus \mathcal O_{\mathbb P^{1}}(-2)^{n-1})$
by calculating the Chern numbers based on a gradation on its tangent space.

(2) As already described in Section 3,
the variety of minimal rational tangents of the odd symplectic Grassmannian $(C_n, \alpha_{k}, \alpha_{k-1})$
is $\mathbb P(\mathcal O_{\mathbb P^{k-1}}(-1)^{2n-2k+1} \oplus \mathcal O_{\mathbb P^{k-1}}(-2))$.
\end{proof}

Let $Z$ be a smooth Schubert variety of type $(B_3, \alpha_2, \alpha_3)$.
The gradation on the tangent space of $Z$ 
described in Proposition 25 of Kim \cite{Kim} could be embedded in the gradation on the tangent space
$$\mathfrak g_{-1}=E^* \otimes Q, \quad
\mathfrak g_{-2}=E \otimes S^2 Q, \quad \mathfrak
g_{-3}=Q,\quad \mathfrak g_{-4}=E^*$$
as a linear section by Lemma \ref{Schubert varieties:F4} (1) after proper shifting of the gradation on the tangent space of $Z$.
Let $\mathfrak g'_{-1}\oplus \mathfrak g'_{-2} \oplus \mathfrak
g'_{-3}$ be the induced gradation on the tangent space of $Z$ from $X$.
Then
$$\mathfrak g'_{-1}=F^* \otimes Q, \quad
\mathfrak g'_{-2}={F^*}^{\perp} \otimes S^2 Q, \quad \mathfrak
g'_{-3}=\wedge^2 {F^*}^{\perp},$$
where $F^*\subset E^*$ is a fixed subspace of dimension 1 
and
${F^*}^{\perp} = \{ f\in E : \, \langle e^*, f \rangle = 0, \, \forall e^* \in F^* \}$.
Hence, the variety $\mathcal B$ of minimal rational tangents of $Z$ at a general point $x$ is
 \begin{eqnarray*} \mathcal B&=&\mathbb P(\{ e^* \otimes q
+ f \otimes q^2 : \, e^* \in F^*, \, f \in {F^*}^{\perp}, \, q \in Q
\}) \text{ as a linear section of }\\
\mathcal A&=&\mathbb P(\{e^* \otimes q + f \otimes q^2 :\,\, \langle
e^*,f \rangle =0 ,\,\, e^* \in E^*, f \in E, q \in Q\}).
\end{eqnarray*}
This $\mathcal B$ is a $\mathbb P^2$-bundle
$\mathbb P(\mathcal O_{\mathbb P^1}(-1) \oplus \mathcal O_{\mathbb P^1}(-2)^2)$ over $\mathbb P(Q)=\mathbb P^1$.
This result coincides with Lemma \ref{VMRT of submanifolds:F4} (1).

Let $Z$ be a smooth Schubert variety of type $(C_2, \alpha_2, \alpha_1)$.
By Lemma \ref{Schubert varieties:F4} (2) and Proposition 25 of Kim \cite{Kim},
after proper shifting of the gradation on the tangent space of $Z$,
we let $\mathfrak g'_{-1}\oplus \mathfrak g'_{-2}$ be the induced gradation on the tangent space of $Z$ from $X$. Then
$$\mathfrak g'_{-1}=F^* \otimes Q, \quad
\mathfrak g'_{-2}={F'} \otimes S^2 Q,$$
where $F^*\subset E^*$ is the above fixed subspace and $F'\subset {F^*}^{\perp}$ is an 1-dimensional subspace.
Hence, the variety $\mathcal B$ of minimal rational tangents of $Z$ at a general point $x$ is
$$\mathcal B=\mathbb P(\{ e^* \otimes q + f \otimes q^2 : \, e^* \in F^*, \, f \in {F'}, \, q \in Q \})$$
as a linear section of $\mathcal A$.
This $\mathcal B$ is
a $\mathbb P^1$-bundle $\mathbb P(\mathcal O_{\mathbb P^1}(-1) \oplus \mathcal O_{\mathbb P^1}(-2))$ over $\mathbb P(Q)=\mathbb P^1$.
This result coincides with Lemma \ref{VMRT of submanifolds:F4} (2).

\begin{proposition} \label{nondegenerate: F_4}
Let $X$ be the rational homogeneous manifold of type $(F_4, \alpha_3)$
and $Z$ be a smooth Schubert variety of type
$(B_3, \alpha_2, \alpha_3)$ or $(C_2, \alpha_2, \alpha_1)$.
Let $\mathcal A \subset \mathbb P(T_xX)$ and
$\mathcal B \subset \mathbb P(T_xZ) \subset \mathbb P(T_xX)$ be
the varieties of minimal rational tangents at a common general point $x$ of $X$ and $Z$.
Then the pair $(\mathcal A, \mathcal B)$ is nondegenerate.
\end{proposition}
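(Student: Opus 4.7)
The plan is to mirror the strategy of Proposition \ref{nondegenerate: symplectic}: for each smooth point $\beta \in \widetilde{\mathcal B}$, parametrize a general $v \in T_\beta \widetilde{\mathcal A}$ explicitly, pair it with a rich enough family of test vectors $w \in T_\beta \widetilde{\mathcal B}$ via Lemma \ref{second fundamental form:F4}, and read off from the resulting identity, taken modulo $T_\beta \widetilde{\mathcal A}$, that $v$ must be a scalar multiple of $\beta$. The key structural observation is that, since $F^* \subset E^*$ and the ``$F$-part'' of $\widetilde{\mathcal B}$ (namely $F^{*\perp}$ in case~(1) and $F' \subset F^{*\perp}$ in case~(2)) lie in the annihilator of $F^*$, the defining condition $\langle e^*, f\rangle = 0$ of $\widetilde{\mathcal A}$ is automatic on $\widetilde{\mathcal B}$; thus $T_\beta \widetilde{\mathcal B}$ is cut out from $T_\beta \widetilde{\mathcal A}$ purely by the linear restrictions $e'^* \in F^*$ and $f' \in F$.

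At a generic smooth point $\beta = e^* \otimes q + f \otimes q^2$ with $f \neq 0$, write a general $v \in T_\beta \widetilde{\mathcal A}$ as
$v = e^* \otimes q^0 + e^{*0} \otimes q + f^0 \otimes q^2 + f \otimes (2q \circ q^0)$ subject to $\langle e^{*0}, f\rangle + \langle e^*, f^0\rangle = 0$, and pair $v$ with the test family $w = e^* \otimes q' + f \otimes (2q \circ q')$ for $q' \in Q$, which lies in $T_\beta \widetilde{\mathcal B}$ in both cases. Case~I of Lemma \ref{second fundamental form:F4} then produces
\[
\sigma(v, w) \equiv e^{*0} \otimes q' + f \otimes (2q^0 \circ q') + f^0 \otimes (2q \circ q') \pmod{T_\beta \widetilde{\mathcal A}}.
\]
Choosing a basis $\{q, p\}$ of $Q$ and decomposing along $E^* \otimes Q$ and $S^2Q = \mathbb C q^2 \oplus \mathbb C(q \circ p) \oplus \mathbb C p^2$, the vanishing of $\sigma(v, w)$ in the quotient for all $q'$ forces, in turn: $e^{*0} \in \mathbb C e^*$ from the $E^* \otimes Q$ component, then $q^0 \in \mathbb C q$ from the $p^2$-coefficient, and finally $f^0$ is pinned to a specific multiple of $f$ from the $q \circ p$-coefficient. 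A direct substitution then collapses $v$ to a scalar multiple of $\beta$.

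For the boundary smooth point $\beta = e^* \otimes q$ (corresponding to $f = 0$), which one verifies remains a smooth point of both $\widetilde{\mathcal A}$ and $\widetilde{\mathcal B}$, the same method goes through using Case~II of Lemma \ref{second fundamental form:F4}. Testing successively against $w = e^* \otimes q'$, $w = e'^* \otimes q$ with $e'^* \in F^*$, and $w = f' \otimes q^2$ with $f' \in F$ extracts in turn $e^{*0} \in \mathbb C e^*$, $f^0 = 0$, and $q^0 \in \mathbb C q$, again forcing $v \in \mathbb C\beta$.

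The main technical obstacle I anticipate is the careful bookkeeping required to separate the tensorial components and to track when pairings such as $\langle e^{*0}, f'\rangle$ (appearing as $\mathfrak g_{-4}$-type contributions $(-\langle e'^*, f^0\rangle e) \otimes q^2$ in Lemma \ref{second fundamental form:F4}) collapse on $T_\beta \widetilde{\mathcal B}$ thanks to the inclusion $F \subset F^{*\perp}$. Once this is set up cleanly, the two subcases $(B_3, \alpha_2, \alpha_3)$ and $(C_2, \alpha_2, \alpha_1)$ are handled uniformly, since the whole argument only requires $F \subset F^{*\perp}$ together with the existence of at least one nonzero element in $F$, both of which are satisfied.
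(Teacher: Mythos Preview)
Your proposal is correct and follows essentially the same approach as the paper: both use the explicit formulas of Lemma~\ref{second fundamental form:F4} to pair a general $v \in T_\beta\widetilde{\mathcal A}$ against the natural test families in $T_\beta\widetilde{\mathcal B}$ and deduce $v \in \mathbb C\beta$. The paper phrases this as computing $\Ker\sigma_\beta(\cdot, S)$ for each piece $S$ of $T_\beta\widetilde{\mathcal B}$ and intersecting, while you track the coefficients of a parametrized $v$ directly, but the content is identical (one cosmetic slip: the term $(-\langle e'^*, f'\rangle e)\otimes q^2$ lives in $\mathfrak g_{-2} = E\otimes S^2Q$, not $\mathfrak g_{-4}$).
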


\begin{proof}
By this description of the variety $\mathcal B$ of minimal rational tangents of $Z$
as a linear section of the variety $\mathcal A$ of minimal rational tangents of $X$ and
by the computation of the second fundamental form of $\mathcal A$, we get the desired results.

(1) Let $Z$ be a smooth Schubert variety of type $(B_3, \alpha_2, \alpha_3)$.
(i) The tangent space $T_{\beta} \widetilde{\mathcal B}$ at $\beta = e^* \otimes q$ is given by
 $\{ e^* \otimes q' + e'^* \otimes q +f' \otimes q^2 :
\,\, 
e'^* \in F^*, f' \in {F^*}^{\perp}, q' \in
Q \}$.
Then we have $\Ker \sigma_{\beta}( \,\cdot \, , e^* \otimes Q) = e^* \otimes Q$,
$\Ker \sigma_{\beta}( \,\cdot \, , F^* \otimes q) = 
\{ e'^* \otimes q + f' \otimes q^2 : \,\, e'^* \in E^*, f' \in {F^*}^{\perp} \}
$ and
$\Ker \sigma_{\beta}(\,\cdot \, , {F^*}^{\perp} \otimes q^2) = \{ e'^* \otimes q +  f'\otimes q^2 : e'^* \in F^*, f' \in E \}$.
Therefore, $\Ker \sigma_{\beta}( \,\cdot \, , T_{ \beta} \widetilde{\mathcal B} ) = \mathbb C(e^*\otimes q) = \mathbb C \beta$.
(ii) The tangent space $T_{\beta} \widetilde{\mathcal B}$ at $\beta = e^* \otimes q + f\otimes q^2$ is given by
$\{ e^* \otimes q' + e'^* \otimes q +f' \otimes q^2 + f \otimes (2q\circ q') :\,\, e'^* \in F^*, f' \in {F^*}^{\perp}, q' \in Q \}$.
Then we have $\Ker \sigma_{\beta}( \,\cdot \, , F^* \otimes q) \cap \Ker \sigma_{\beta}( \,\cdot \, , {F^*}^{\perp} \otimes q^2)=\{ e'^* \otimes q + f'\otimes q^2 : e'^* \in F^*, f' \in {F^*}^{\perp} \}$
and $\Ker \sigma_{\beta}( \,\cdot \, , \{e^* \otimes q' + 2f\otimes q \circ q' : q' \in Q \}) = \mathbb C (e^* \otimes q + f\otimes q^2)$.
Therefore, $\Ker \sigma_{\beta}( \,\cdot \, , T_{ \beta} \widetilde{\mathcal B} )= \mathbb C(e^* \otimes q + f\otimes q^2) = \mathbb C \beta$.

(2) Let $Z$ be a smooth Schubert variety of type $(C_2, \alpha_2, \alpha_1)$. (i) The tangent space $T_{\beta} \widetilde{\mathcal B}$ at $\beta = e^* \otimes q$ is given by
 $\{ e^* \otimes q' + e'^* \otimes q +f' \otimes q^2 :
\,\, e'^* \in F^*, f' \in {F'}, q' \in
Q \}$.
Then we have $\Ker \sigma_{\beta}( \,\cdot \, , e^* \otimes Q) = e^* \otimes Q$,
$\Ker \sigma_{\beta}( \,\cdot \, , F^* \otimes q) = \{ e'^* \otimes q + f' \otimes q^2 : \,\, e'^* \in E^*, f' \in {F'} \}$ and
$\Ker \sigma_{\beta}(\,\cdot \, , {F'} \otimes q^2) = \{ e'^* \otimes q +  f'\otimes q^2 : e'^* \in F^*, f' \in E \}$.
Therefore, $\Ker \sigma_{\beta}( \,\cdot \, , T_{ \beta} \widetilde{\mathcal B} ) = \mathbb C(e^*\otimes q) = \mathbb C \beta$.
(ii) The tangent space $T_{\beta} \widetilde{\mathcal B}$ at $\beta = e^* \otimes q + f\otimes q^2$ is given by
$\{ e^* \otimes q' + e'^* \otimes q +f' \otimes q^2 + f \otimes (2q\circ q') :\,\,
e'^* \in F^*, f' \in {F'}, q' \in Q \}$.
Then we have $\Ker \sigma_{\beta}( \,\cdot \, , F^* \otimes q) \cap \Ker \sigma_{\beta}( \,\cdot \, , {F'} \otimes q^2)=\{ e'^* \otimes q + f'\otimes q^2 : e'^* \in F^*, f' \in {F^*}^{\perp} \}$
and $\Ker \sigma_{\beta}( \,\cdot \, , \{e^* \otimes q' + 2f\otimes q \circ q' : q' \in Q \}) = \mathbb C (e^* \otimes q + f\otimes q^2)$.
Therefore, $\Ker \sigma_{\beta}( \,\cdot \, , T_{ \beta} \widetilde{\mathcal B} )= \mathbb C(e^* \otimes q + f\otimes q^2) = \mathbb C \beta$.
\end{proof}

\begin{proposition} \label{tangency: F_4}
In the setting of Proposition \ref{nondegenerate: F_4},
if $h$ is an element in the isotropy subgroup $P_{x}$ of $G$ at a general point $x \in Z$
such that $h \mathcal B$ and $\mathcal B$ are tangent at a general point of intersection,
then $h\mathcal B$ is equal to $\mathcal B$.
\end{proposition}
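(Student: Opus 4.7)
The plan is to mirror the approach of Proposition \ref{projective equivalence:Symplectic Grassmannian}: decompose $h = h_L h_U$ with $h_L$ in a Levi factor $L$ of $P_x$ and $h_U$ in the unipotent radical $U_{P_x}$, and exploit the graded structure of $\mathcal B$ as a linear section of $\mathcal A$. Since $Z$ is $B$-invariant and $B \subset P_x$ (choosing $x$ compatibly), the unipotent radical $U_{P_x} \subset U_B$ acts on $Z$ fixing $x$, hence preserves $T_xZ$ and the subspace $W \subset T_xX$ cutting out $\mathcal B$ inside $\mathcal A$. Therefore $h_U \mathcal B = \mathcal B$ and $h \mathcal B = h_L \mathcal B$, reducing the problem to the Levi part.

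For $h_L \in L$, which acts on $T_xX$ via $\SL(E) \times \SL(Q)$ and preserves the graded decomposition $\mathfrak g_{-1} \oplus \mathfrak g_{-2} \oplus \mathfrak g_{-3} \oplus \mathfrak g_{-4}$, the image $h_L W$ is again of the graded form $(h_L(F^*) \otimes Q) \oplus (h_L(F^*)^\perp \otimes S^2Q)$ (case (1)) or $(h_L(F^*) \otimes Q) \oplus (h_L(F') \otimes S^2Q)$ (case (2)). At a general intersection point $\beta = e^* \otimes q + f \otimes q^2 \in \mathcal B \cap h_L \mathcal B$, I would compute
\[
T_\beta \widetilde{\mathcal B} = \{\, e^* \otimes q' + e'^* \otimes q + f' \otimes q^2 + 2 f \otimes q \circ q' : e'^* \in F^{*},\ f' \in F'\ \text{or}\ {F^*}^\perp,\ q' \in Q\,\},
\]
by differentiating curves in $\widetilde{\mathcal B}$ exactly as in the proof of Lemma \ref{second fundamental form:F4}. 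The projection of $T_\beta \widetilde{\mathcal B}$ onto the $P_x$-stable summand $\mathfrak g_{-1} = E^* \otimes Q$ equals $F^* \otimes Q$ (collapsing to $F^* \otimes \mathbb C q$ on the degenerate locus where $e^* = 0$), from which $F^*$ is uniquely recoverable; an analogous analysis in $\mathfrak g_{-2} = E \otimes S^2 Q$, taken modulo the base-direction term $\mathbb C f \otimes q \circ Q$, recovers $F'$ in case (2) (in case (1), ${F^*}^\perp$ is already determined by $F^*$).

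Consequently, the tangency hypothesis $T_\beta \widetilde{\mathcal B} = T_\beta \widetilde{h_L \mathcal B}$ forces $h_L(F^*) = F^*$ and, in case (2), $h_L(F') = F'$. These identifications yield $h_L W = W$ and hence $h_L \mathcal B = \mathcal B$, which together with $h_U \mathcal B = \mathcal B$ gives $h \mathcal B = \mathcal B$. The main technical point is verifying that $U_{P_x}$ really does stabilize $W$, which rests on the inclusion $U_{P_x} \subset U_B$ coming from $B \subset P_x$ combined with the $B$-invariance of $Z$; once this reduction to the Levi factor is in place, the projection arguments in $\mathfrak g_{-1}$ and $\mathfrak g_{-2}$ are essentially routine given the explicit description of $T_\beta \widetilde{\mathcal B}$ computed above.
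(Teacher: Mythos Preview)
Your reduction to the Levi factor has a genuine gap. The claim that $U_{P_x}$ preserves $W=T_xZ$ rests on the inclusion $B\subset P_x$, but this forces $B$ to fix $x$, i.e.\ $x$ is the unique $B$-fixed point $o=eP$ of $X$. While $o\in Z$, it lies in the closed $B$-orbit and is \emph{not} a general point of the horospherical variety $Z$ in the sense used throughout: the description of $\mathcal B$ as $\mathbb P\{e^*\otimes q+f\otimes q^2: e^*\in F^*,\,f\in{F^*}^\perp,\,q\in Q\}$ is valid on the open orbit. At a genuinely general $x$, the specific Borel $B$ for which $Z$ is a Schubert variety does not sit inside $P_x$, so the chain $U_{P_x}\subset U_B\subset \operatorname{Stab}_G(Z)$ is unavailable. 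Choosing instead some Borel $B'\subset P_x$ does not help, since $Z$ need not be $B'$-invariant.

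In fact $U_{P_x}$ does \emph{not} stabilize $\mathcal B$. Working in the grading, take $h_1=\exp(e'\otimes q'^*)$ with $e'\in E$, $q'^*\in Q^*$; as the paper computes, its differential sends $f\otimes q^2\in\mathfrak g_{-2}$ to $c\,f(e')\otimes q\in\mathfrak g_{-1}$, where $f(e')=(f_1^*\wedge f_2^*)(e')$. With $F^*=\mathbb C e_1^*$ one has ${F^*}^\perp\ni e_3\leftrightarrow e_1^*\wedge e_2^*$, and then $f(e_1)=e_2^*\notin F^*$. So for generic $h_1$ the image $h_1\mathcal B$ has its $\mathfrak g_{-1}$-part spanned by $F^*+h_1({F^*}^\perp)\neq F^*$, hence $h_1\mathcal B\neq\mathcal B$. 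Thus the shortcut ``$h_U\mathcal B=\mathcal B$ always'' is false, and the problem does not reduce to the Levi.

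The paper's proof proceeds differently: it writes $h=h_0h_1h_2h_3h_4$ along the grading, observes that $h_2,h_3,h_4$ act trivially on $\mathcal B\subset\mathbb P(\mathfrak g_{-1}\oplus\mathfrak g_{-2})$, and computes $h\mathcal B$ explicitly as
\[
\mathbb P\{e^*\otimes q+f\otimes q^2:\ e^*\in h_0F^*+h_0h_1({F^*}^\perp),\ f\in h_0({F^*}^\perp),\ q\in Q\}.
\]
Only then does it invoke the tangency hypothesis $T_\beta\widetilde{h\mathcal B}=T_\beta\widetilde{\mathcal B}$ at a general $\beta$ to force $h_0F^*+h_0h_1({F^*}^\perp)=F^*$ and $h_0({F^*}^\perp)={F^*}^\perp$, whence $h\mathcal B=\mathcal B$. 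Your projection arguments in $\mathfrak g_{-1}$ and $\mathfrak g_{-2}$ are essentially the same endgame, but you must carry the $h_1$-contribution through rather than discard it at the outset.
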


\begin{proof}
We recall $K=\mathbb C^* \times \SL(E^*)\times \SL(Q)$-module isomorphisms:
$$\mathfrak g_{-1}=E^* \otimes Q, \quad
\mathfrak g_{-2}=\wedge^2 E^* \otimes S^2 Q, \quad \mathfrak
g_{-3}=Q,\quad \mathfrak g_{-4}=E^*$$
under the identification $T_oX\cong\mathfrak g_{-1}\oplus \mathfrak g_{-2}\oplus \mathfrak g_{-3}
\oplus \mathfrak g_{-4}$ at the base point $o \in X=G/P$.

(1) Let $Z$ be a smooth Schubert variety of type $(B_3, \alpha_2, \alpha_3)$.
 The variety of minimal rational tangents at a general point $x$ is $$\mathcal B=\mathbb P(\{ e^* \otimes q
+ f \otimes q^2 : \, e^* \in F^*, \, f \in {F^*}^{\perp}, \, q \in Q
\}),$$ where $F^*\subset E^*$ is a subspace of dimension 1 and
${F^*}^{\perp} = \{ f\in E : \, \langle e^*, f \rangle = 0, \, \forall e^* \in F^* \}$.

Let $h$ be an element in the isotropy subgroup $P_x$ of $G$,
then $h=h_0h_1h_2h_3h_4$ where $dh_i \in \bigoplus_{j\in \mathbb Z}\Hom(\frak g_{j},\frak g_{j+i})$.
The left-multiplication actions of $h_i$ at $e^* \otimes q
+ f \otimes q^2 \in \mathcal B$ are
\begin{eqnarray*}
h_0(e^* \otimes q+ f \otimes q^2) &=&h_0e^* \otimes h_0q + h_0 f \otimes h_0 q^2\in (E^* \otimes Q) \oplus (\wedge^2 E^* \otimes S^2 Q)\\
h_1(e^* \otimes q+ f \otimes q^2) &=&e^* \otimes q+ f \otimes q^2+dh_1(f \otimes q^2) \\
h_i(e^* \otimes q+ f \otimes q^2) &=& e^* \otimes q+ f \otimes q^2 \,\text{ for } \,  i=2,3,4.
\end{eqnarray*}

Let $f=f_1^* \wedge f_2^*$ for $f_1^*, f_2^* \in E^*$. For the action $dh_1\in \bigoplus_{j\in \mathbb Z}\Hom(\frak g_{j},\frak g_{j+i})$, we consider the adjoint action of $\frak g_1$ to $\frak g_{-2}$ at $f\otimes q^2$ as follows;
\begin{eqnarray*}
  (E\otimes Q^*) \times (\wedge^2 E^*\otimes \Sym^2 Q) &\rightarrow& E^* \otimes Q \\
  (e' \otimes q'^* \quad, \quad f\otimes q^2) \qquad\quad & \mapsto & c f(e') \otimes q
\end{eqnarray*}
where $e'\in E$, $q'^* \in Q^*$, $c$ is a scalar which is zero if $\langle q'^*,q \rangle=0$,
and $f(e')=(f_1^* \wedge f_2^*)(e')=\langle f_1^*,e' \rangle f_2^* - \langle f_2^*,e' \rangle f_1^*$.
From now on, $h_1$ action on a subspace $F^{*\perp}\subset \wedge^2E^*$ means $f(e')$, i.e., $f(e') \in h_1F^{*\perp}$ and $dh_1(f \otimes q^2)=h_1f\otimes cq$ for some scalar $c$.

Since $dh_1(f \otimes q^2) \in E^* \otimes Q$ and $h_0Q=Q$, it follows that
 $$h\mathcal B=\mathbb P(\{ e^* \otimes q
+ f \otimes q^2 : \, e^* \in h_0F^*+h_0h_1({F^*}^{\perp}), \, f \in h_0({F^*}^{\perp}), \, q \in Q
\}).$$
If $\mathcal B$ and $h\mathcal B$ intersect at a general point
$\beta = e^* \otimes q + f \otimes q^2 \in \mathcal B \cap (h\mathcal B)$,
then the tangent space $T_{\beta} (h\widetilde{\mathcal B})$ is given by
$$\{ e^* \otimes q' + e'^* \otimes q +f' \otimes q^2 + f \otimes (2q\circ q') :\,\, e'^* \in h_0F^*+h_0h_1({F^*}^{\perp}), f' \in h_0.({F^*}^{\perp}), q' \in Q \}.$$
By assumption, $T_{\beta} (h\widetilde{\mathcal B})$ coincide with $T_{\beta} (\widetilde{\mathcal B})$,
we see $h_0F^*+h_0h_1({F^*}^{\perp})=F^*$ and $h_0({F^*}^{\perp})={F^*}^{\perp}$. Hence, $h\mathcal B=\mathcal B$.

(2) Let $Z$ be a smooth Schubert variety of type $(C_2, \alpha_2, \alpha_1)$.
The variety of minimal rational tangents at a general point $x$ is $$\mathcal B=\mathbb P(\{ e^* \otimes q
+ f \otimes q^2 : \, e^* \in F^*, \, f \in {F'}, \, q \in Q
\}),$$ where $F^*\subset E^*$ and
$F'\subset {F^*}^{\perp}$ are subspaces of dimension 1.

If $h$ is an element in the isotropy subgroup $P_x$ of $G$, then
 $$h\mathcal B=\mathbb P(\{ e^* \otimes q
+ f \otimes q^2 : \, e^* \in h_0F^*+h_0h_1F', \, f \in h_0{F'}, \, q \in Q
\}).$$
If $\mathcal B$ and $h\mathcal B$ intersect at a general point
$\beta = e^* \otimes q + f \otimes q^2 \in \mathcal B \cap (h\mathcal B)$,
then the tangent space $T_{\beta} (h\widetilde{\mathcal B})$ is given by
$$\{ e^* \otimes q' + e'^* \otimes q +f' \otimes q^2 + f \otimes (2q\circ q') :\,\, e'^* \in h_0F^*+h_0h_1{F'}, f' \in h_0{F'}, q' \in Q \}.$$
By assumption, $T_{\beta} (h\widetilde{\mathcal B})$ coincides with $T_{\beta} (\widetilde{\mathcal B})$,
we see $h_0F^*+h_0h_1{F'}=F^*$ and $h_0({F^*}^{\perp})={F^*}^{\perp}$. Hence, $h\mathcal B=\mathcal B$.
\end{proof}

\begin{remark}
In the proof (1) of Proposition \ref{tangency: F_4}, suppose that $h_0F^* \subset h_0h_1({F^*}^{\perp})$, the dimension of $h_0h_1({F^*}^{\perp})$ is $2$,
and the dimension of $h_0({F^*}^{\perp})$ is $1$, then $h\mathcal B=\mathbb P(\{ e^* \otimes q
+ f \otimes q^2 : \, e^* \in h_0h_1({F^*}^{\perp}), \, f \in h_0({F^*}^{\perp}), \, q \in Q
\})$ which is isomorphic to $\mathbb P(\mathcal O_{\mathbb P^{1}}(-1)^2 \oplus \mathcal O_{\mathbb P^{1}}(-2))$.
In this case, two rank 3 vector bundles $h \mathcal B=\mathbb P(\mathcal O_{\mathbb P^{1}}(-1)^2 \oplus \mathcal O_{\mathbb P^{1}}(-2))$
and $\mathcal B=\mathbb P(\mathcal O_{\mathbb P^{1}}(-1) \oplus \mathcal O_{\mathbb P^{1}}(-2)^2)$ are not tangent at a general point of intersection.
\end{remark}

\begin{proposition}
\label{projective equivalence: F_4}
In the setting of Proposition \ref{nondegenerate: F_4},
if $\mathcal B'=\mathcal A \cap \mathbb P(W')$ is another linear section of $\mathcal A$
by a linear subspace $\mathbb P(W')$ of $\mathbb P(T_xX)$ such that
$(\mathcal B \subset \mathbb P(T_xZ))$ is projectively equivalent to $(\mathcal B' \subset \mathbb P(W'))$,
then there is an element $h$ in 
a Levi factor of $P$ such that $\mathcal B' = h \mathcal B$.
\end{proposition}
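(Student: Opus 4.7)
The plan is to adapt the strategy of Proposition \ref{projective equivalence:Symplectic Grassmannian} to the bundle structure $\pi \colon \mathcal A \to \mathbb P(Q) = \mathbb P^1$, whose fibers are four-dimensional quadrics $\mathbb Q^4$. First I would show that this fibration is projectively canonical: for a general $\beta \in \widetilde{\mathcal A}$, the base locus $\{v \in T_\beta \widetilde{\mathcal A} : \sigma_\beta(v, v) = 0\}$ coincides with the tangent space to the $\pi$-fiber through $\beta$, as can be read off directly from Lemma \ref{second fundamental form:F4}. Consequently, any projective equivalence between $(\mathcal B \subset \mathbb P(T_xZ))$ and $(\mathcal B' \subset \mathbb P(W'))$ must carry base loci to base loci, so $\mathcal B'$ inherits a fibration by linear spaces compatible with $\pi$, and after composing with an element of $\SL(Q)$ inside the Levi factor we may assume $\pi(\mathcal B') = \pi(\mathcal B) = \mathbb P(Q)$.

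Next I would analyze the fiber of $\mathcal B'$ inside each quadric fiber of $\mathcal A$. From the explicit descriptions just before Proposition \ref{nondegenerate: F_4}, the fiber of $\mathcal B$ over $[q]$ is a maximal linear $\mathbb P^2$ in $\mathbb Q^4$ from one ruling family (case (1)), or a $\mathbb P^1$ of a specific type lying inside such a $\mathbb P^2$ (case (2)). Projective equivalence forces each fiber of $\mathcal B'$ to be a linear subspace of the same dimension and ruling type. Since $\SL(E^*)$ acts transitively on each ruling family of maximal $\mathbb P^2$'s in $\mathbb Q^4$ and on the appropriate $\mathbb P^1$'s inside them, the fibers can be matched pointwise. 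To globalize, I use that the defining subspace $W' \subset T_xX$ must split under the $\mathbb C^*$-center of the Levi into graded pieces $W' \cap \mathfrak g_{-1}$ and $W' \cap \mathfrak g_{-2}$ with the same dimensions as those of $W$; thus $W' = (F'^* \otimes Q) \oplus (F'' \otimes S^2 Q)$ for suitable $F'^* \subset E^*$ and $F'' \subset E$, and transitivity of $\SL(E^*) \times \SL(Q)$ on such pairs produces the required element $h$ in a Levi factor.

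The main obstacle I anticipate is distinguishing the two ruling families of maximal $\mathbb P^2$'s in the quadric fiber $\mathbb Q^4$: if $\mathcal B$ and $\mathcal B'$ were to lie in different families, they could not be related by an element of the connected Levi. I expect this to fall out of the detailed kernel computations $\Ker \sigma_\beta(\,\cdot\,, T_\beta \widetilde{\mathcal B})$ in the proof of Proposition \ref{nondegenerate: F_4}, which intrinsically encode the ruling type through the shape of the kernel. A secondary task is checking that the fiberwise matching is compatible with the bundle twists $\mathcal O(-1) \oplus \mathcal O(-2)^i$ of Lemma \ref{VMRT of submanifolds:F4}; since these twists are determined by the graded shape of $W'$ relative to the $K$-module structure on $T_xX$, this should reduce to the decomposition argument above.
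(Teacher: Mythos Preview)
Your proposal has a factual error and a genuine gap. First, the claim that at a general $\beta \in \widetilde{\mathcal A}$ the base locus $\{v \in T_\beta \widetilde{\mathcal A} : \sigma_\beta(v,v)=0\}$ equals the tangent space to the $\pi$-fiber is false in the $(F_4,\alpha_3)$ case: from Lemma~\ref{second fundamental form:F4} (Case~I) one has $\sigma_\beta(e'^*\otimes q,\, f'\otimes q^2) = -\langle e'^*,f'\rangle e\otimes q^2$, so the base locus inside the fiber direction is the quadric cone $\{\langle e'^*,f'\rangle = 0\}$, not the whole $5$-dimensional fiber tangent space. (This is exactly what the paper computes, obtaining an affine cone over a $\mathbb Q^3$.) You can still recover the $\pi$-fibration from the \emph{linear span} of the base locus, so this step can be repaired, but it is not the analogue of the symplectic situation where the base locus genuinely is linear.

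The more serious gap is your assertion that $W'$ must split under the $\mathbb C^*$-center of the Levi into $W'\cap\mathfrak g_{-1}$ and $W'\cap\mathfrak g_{-2}$. Nothing you have written forces this: a priori the distinguished $\mathcal O(-1)$ sub-line-bundle of $\mathcal B' \cong \mathbb P(\mathcal O(-1)\oplus\mathcal O(-2)^2)$ could trace out a line in $\mathbb P(W')$ that is not contained in $\mathbb P(\mathfrak g_{-1})$, and then $W'$ would not be graded. This is precisely the crux of the paper's argument, and it handles it differently from what you propose. The paper singles out $\mathcal B_1 := \mathcal B \cap \mathbb P(E^*\otimes Q)$ and its counterpart $\mathcal B_1' \subset \mathcal B'$, then compares base loci pointwise: at $b\in \widetilde{\mathcal B}_1$ the base locus of $\sigma^{\mathcal B}_b$ is a union of three distinct linear subspaces, whereas at any $b' \in \widetilde{\mathcal A}\setminus(E^*\otimes Q)$ the base locus of $\sigma_{b'}$ (hence of $\sigma^{\mathcal B'}_{b'}$) sits inside a single quadric cone. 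These are not projectively isomorphic, so $\mathcal B_1'$ is forced into $\mathbb P(E^*\otimes Q)$, which is what gives the grading of $W'$. Your appeal to the kernel computations of Proposition~\ref{nondegenerate: F_4} does not supply this, because those kernels are computed only for the specific $\mathcal B$ and do not by themselves rule out a $\mathcal B'$ whose $\mathcal B_1'$ escapes $\mathbb P(\mathfrak g_{-1})$. Once the grading is established, the ruling-family worry you raise dissolves: the other ruling gives $W' = (M^\perp \otimes Q)\oplus(M\otimes S^2Q)$ of dimension $7$, producing $\mathbb P(\mathcal O(-1)^2\oplus\mathcal O(-2))$, which is not projectively equivalent to $\mathcal B$ (cf.\ the paper's Remark following Proposition~\ref{tangency: F_4}).
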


\begin{proof}
Let $Z$ be a smooth Schubert variety of type $(B_3, \alpha_2, \alpha_3)$.
Since $\mathcal B$ is a $\mathbb P^{2}$-bundle on $\mathbb P(Q) =\mathbb P^1$,
$\mathcal B'$ is also a $\mathbb P^{2}$-bundle on $\mathbb P^1$.
Let $\mathcal B_1'$ be a linear section of $\mathcal B'$
which is projectively equivalent to the linear section $\mathcal B_1:=\mathcal B \cap \mathbb P(E^*\otimes Q)\simeq \mathbb P(F^*) \times \mathbb P(Q)$ of $\mathcal B$.

Suppose that $\mathcal B_1'$ is not contained in $ \mathbb P(E^* \otimes Q) $.
Take $b'=e^* \otimes q + f\otimes q^2 \in \widetilde{\mathcal B}_1' \cap
(\widetilde{\mathcal A} \backslash  (E^* \otimes Q))$. Since $\mathcal
B'$ is a linear section $\mathcal A \cap \mathbb P(W')$ of $\mathcal
A$, the tangent space $T_{b'} \widetilde{\mathcal B}'$ at $b'$ is
contained in the intersection $W'\cap T_{b'}\widetilde{\mathcal A}$
and the second fundamental form $\sigma^{\mathcal
B'}_{b'}:T_{b'}\widetilde{\mathcal B}'\times
T_{b'}\widetilde{\mathcal B}' \rightarrow
W'/T_{b'}\widetilde{\mathcal B'}$ of $\widetilde{\mathcal B}'$ at
$b'$, composed with the quotient map $  W'/T_{b'}
\widetilde{\mathcal B}' \rightarrow T_xX/T_{b'}\widetilde{\mathcal A}$,
is the restriction of the second fundamental form $\sigma_{b'}$ of
$\widetilde{\mathcal A}$ to $T_{b'}\widetilde{\mathcal B}' \times
T_{b'}\widetilde{\mathcal B}'$. In particular, $\{ v \in
T_{b'}\widetilde{\mathcal B}' : \sigma^{\mathcal B'}_{b'}(v,v)=0 \}
$ is contained in
\begin{eqnarray*}
&&\{ v \in T_{b'} \widetilde{\mathcal A} : \sigma_{b'}(v,v)=0\} \\
&=&\{ e'^* \otimes q + f'\otimes q^2 : \langle e'^*,f \rangle + \langle e^*,
f' \rangle =0 ,\,\, \langle e'^*,f' \rangle =0 ,\,\, e'^* \in E^*, f' \in E \}.
\end{eqnarray*}
Hence, $\{ v \in T_{b'}\widetilde{\mathcal B}' : \sigma^{\mathcal B'}_{b'}(v,v)=0 \}
$ is a linear section of an affine cone of a hyperquadric $\mathbb Q^3$
because
$\mathcal B'$ is a linear section of $\mathcal A$.
For $b \in \widetilde{\mathcal B}_1$, $\{v
\in T_b\widetilde{\mathcal B} : \sigma^{\mathcal B}_b(v,v)=0 \}$ is
the union of three subspaces
$F^* \otimes q $, $e^* \otimes Q$ and ${F^*}^{\perp} \otimes q^2$,
while for $b' \in \widetilde{\mathcal B}_1' \cap (\widetilde{\mathcal A} \backslash   (E^* \otimes Q))$,
$\{ v \in T_{b'}\widetilde{\mathcal B}' : \sigma^{\mathcal B'}_{b'}(v,v)=0 \}$
is as above.
Thus the second fundamental form $\sigma^{\mathcal B}_{b}$ is not isomorphic to
$\sigma^{\mathcal B'}_{b'}$ and
hence $\mathcal B \subset \mathbb P(T_xZ)$
cannot be projectively equivalent to $\mathcal B' \subset \mathbb P(W')$, which is a contradiction.
Therefore, $\mathcal B_1'$ is contained in
$\mathbb P(E^* \otimes Q) \cap \mathcal A \simeq \mathbb P(E^*) \times \mathbb P(Q)$.

For $b'=e^*\otimes q \in \widetilde{\mathcal B'_1}$, $\{v
\in T_b'\widetilde{\mathcal B'} : \sigma^{\mathcal B}_b(v,v)=0 \}$ should be the union of three subspaces $R^* \otimes q $, $e^* \otimes Q$ and ${R^*}^{\perp} \otimes q^2$ for a subspaces $R^*\subset E^*$ of $\dim R^*=1$. Hence, $\mathcal B'$ has nonzero intersection $\mathcal B_1'=\mathbb P(R^*) \times
\mathbb P(Q)$ with $\mathbb P(E^*) \times
\mathbb P(Q)$.

Since $\mathcal B$ and $\mathcal B'$ are linear sections of $\mathcal A$
and second fundamental forms of $\mathcal B$ and $\mathcal B'$ are equivalent,
we see that $\{ v \in T_{b'}\widetilde{\mathcal B'} : \sigma_{b'}^{\mathcal
B'}(v,v)=0 \}$ is contained in $\{ v \in T_{b'
}\widetilde{\mathcal A} : \sigma_{b' }(v,v)=0 \}$ as a linear section;
\begin{eqnarray*}\{ e'^* \otimes q + f'\otimes q^2 : \langle e'^*,f \rangle + \langle e^*,
f' \rangle =0 ,\, e'^* \in R^*, f' \in {R^*}^{\perp} \}.\end{eqnarray*}
More precisely, for $b'=e^* \otimes q + f\otimes q^2 \in \tilde{\mathcal B'} \backslash \tilde{\mathcal B_1'} \subset\tilde{\mathcal A}\backslash (E^* \otimes Q)$, the space $\{ v \in T_{b'} \widetilde{\mathcal B'}: \sigma_{b'}(v,v)=0\}$ contains $\{ e'^* \otimes q + f'\otimes q^2 : \langle e'^*,f \rangle + \langle e^*,
f' \rangle =0 ,\, e'^* \in R^*, f' \in {R^*}^{\perp} \}$, meanwhile, for $b=e^* \otimes q + f\otimes q^2 \in \tilde{\mathcal B}\backslash \tilde{\mathcal B_1} \subset \tilde{\mathcal A} \backslash (E^* \otimes Q)$, the space $\{ v \in T_{b} \widetilde{\mathcal B}: \sigma_{b}(v,v)=0\}=\{ e'^* \otimes q + f'\otimes q^2 : \langle e'^*,f \rangle + \langle e^*,
f' \rangle =0 ,\, e'^* \in F^*, f' \in {F^*}^{\perp} \}$. Because second fundamental forms of $\mathcal B'$ and $\mathcal B$ are equivalent, the dimensions of base locus are same, the space $\{ v \in T_{b'} \widetilde{\mathcal B}: \sigma_{b'}(v,v)=0\}$ should be $\{ e'^* \otimes q + f'\otimes q^2 : \langle e'^*,f \rangle + \langle e^*,
f' \rangle =0 ,\, e'^* \in R^*, f' \in {R^*}^{\perp} \}$ which is tangent to the fiber of the projection $\mathcal A \rightarrow \mathbb P(Q)$. Hence, $\mathcal B'=\mathbb P \{ e^* \otimes q + f\otimes q^2 : q \in Q, e^* \in R^*, f \in {R^*}^{\perp} \}$.
Recall that $F^*$ and $R^*$ are linear subspaces in $E^*$ with the dimensions $\dim F^*=\dim R^*=1$.
Thus, $\mathcal B'=h \mathcal B$ for some $h \in \SL(E^*) \times \SL(Q)$, which is contained in the semisimple part of $P$.

For a smooth Schubert variety of type $(C_2, \alpha_2, \alpha_1)$,
we can prove this in the same way.
\end{proof}

\noindent {\it Proof of Theorem \ref{Main theorem}
in the case that $X$ is the $F_4$-homogeneous manifold associated to a short simple root}.
Since any smooth Schubert variety in the 15-dimensional $F_4$-homogeneous manifold $(F_4, \alpha_4)$ is
a linear space by Theorem 1.3 of Hong-Kwon \cite{HK},
it suffices to consider nonhomogeneous smooth Schubert varieties of type
$(B_3, \alpha_2, \alpha_3)$ and $(C_2, \alpha_2, \alpha_1)$
in the 20-dimensional $F_4$-homogeneous manifold $(F_4, \alpha_3)$.
Using Proposition \ref{automorphism groups of horosphercal}, Proposition \ref{nondegenerate: F_4},
Proposition \ref{tangency: F_4} and Proposition \ref{projective equivalence: F_4},
the same argument in the proof of Theorem \ref{Main theorem} given in Section 3 completes the proof.
\qed

\vskip 1em

\noindent
\textbf{Acknowledgements}. 
In October-November 2016, Workshop and International Conference on Spherical Varieties oganized by Michel Brion and Baohua Fu were held in Sanya.
This work had progressed while attending the workshop.
The authors would like to thank the organizers for their invitation and Tsinghua Sanya International Mathematics Forum for the support and hospitality.
They warmly thank Jaehyun Hong and Jun-Muk Hwang for the discussions on this topic and the useful comments. 
They also thank the referees for pointing out ambiguities and giving helpful comments. \\ 
This work was supported by National Researcher Program 2010-0020413 of NRF, GA17-19437S of GACR, Simons-Foundation grant 346300 and MNiSW 2015-2019 matching fund, BK21 PLUS SNU Mathematical Sciences Division and IBS-R003-Y1.

\end{document}